\DeclareMathOperator{\mychar}{char}
\DeclareMathOperator{\gal}{Gal}
\DeclareMathOperator{\height}{Ht}
\DeclareMathOperator{\myhom}{Hom}
\DeclareMathOperator{\kernel}{ker}
\DeclareMathOperator{\pr}{pr}
\DeclareMathOperator{\can}{can}
\DeclareMathOperator{\myspan}{span}
\DeclareMathOperator{\rank}{rank}
\DeclareMathOperator{\SL}{SL}
\DeclareMathOperator{\GL}{GL}
\DeclareMathOperator{\PGL}{PGL}
\DeclareMathOperator{\spin}{Spin}
\DeclareMathOperator{\SO}{SO}
\DeclareMathOperator{\ad}{ad}
\DeclareMathOperator{\sss}{ss}
\DeclareMathOperator{\aff}{aff}
\DeclareMathOperator{\tor}{tor}
\DeclareMathOperator{\fixer}{Fix}
\newcommand{ \N } [0] { \mathbf{N} }
\newcommand{ \Z } [0] { \mathbf{Z} }
\newcommand{ \R } [0] { \mathbf{R} }
\newcommand{ \C } [0] { \mathbf{C} }
\newcommand{ \Ga } [0] {\mathbf{G}_{\textup{a}}}
\newcommand{ \Gm } [0] {\mathbf{G}_{\textup{m}}}
\newcommand{ \MU } [0] {\boldsymbol{\mu}}
\newcommand{ \field } [0] {K}
\newcommand{ \integers } [0] {\mathcal{O}}
\newcommand{ \resfield } [0] {k}
\newcommand{ \uniformizer } [0] {\pi}
\newcommand{ \roots } [0] {\Phi}
\newcommand{ \simpleroots } [0] {\Delta}
\newcommand{ \positiveroots } [0] {\Pi}
\newcommand{ \simpleaffineroots } [0] {\simpleroots_{\aff}}
\newcommand{ \simpleaffinegradients } [0] {\widetilde{\simpleroots}}
\newcommand{ \highestroot } [0] {\eta}
\newcommand{ \finiteweylsubscript } [0] {\circ}
\newcommand{ \finiteweylgroup } [0] {W_{\finiteweylsubscript}}
\newcommand{ \prW } [0] {\pr_{\finiteweylsubscript}}
\newcommand{ \canonicalsection } [0] {\mathcal{N}_{\finiteweylsubscript}}
\newcommand{ \goodsectionext } [0] {\mathcal{S}}
\newcommand{ \kottwitzhomcodomain } [0] {\mathbf{\Omega}}
\newcommand{ \lengthset } [0] {\mathcal{L}}
\newcommand{ \flippingset } [0] {\mathcal{F}}
\newcommand{ \dynkinDfive } [5] { \boxed{ \begin{smallmatrix} #1 & #2 & #3 & #4 \\ & & #5 & \end{smallmatrix} } }
\newcommand{ \dynkinEsix } [6] { \boxed{ \begin{smallmatrix} #1 & #2 & #3 & #4 & #5 \\ & & #6 & & \end{smallmatrix} } }
\newcommand{ \orbitsymbol } [0] {\mathcal{O}}
\newcommand{ \building } [0] {\mathcal{B}}
\newcommand{ \alcove } [0] {\mathfrak{A}}
\newcommand{ \barycenter } [0] {\mathfrak{b}}
\newcommand{ \iwahorisubgroup } [0] {\mathcal{I}}
\newcommand{ \prounipotentiwahori } [0] {\mathcal{U}}
\newcommand{ \compactcenter } [0] {Z_c}
\newcommand{ \compacttorus } [0] {T_c}
\newcommand{ \onemodptorus } [0] {T_1}
\newcommand{ \integralmodel } [1] {\mathcal{#1}}
\newcommand{ \specialfiber } [1] {\mathbf{#1}}
\newcommand{ \chargroup } [0] {X^{*}}
\newcommand{ \cochargroup } [0] {X_{*}}
\newcommand{ \del } [0] {\partial}
\newcommand{ \emptyargument } [0] {\hspace{8pt}}
\newcommand{ \defeq } [0] { \stackrel{\textup{\tiny{def}}}{=} }
\newcommand{ \iseq } [0] { \stackrel{\textup{\tiny{?}}}{=} }
\newcommand{ \suchthat } [0] { \hspace{5pt} \vert \hspace{5pt} }
\newcommand{ \canarrow } [0] { \stackrel{ \can }{ \longrightarrow } }
\newcommand{ \directedisom } [0] { \stackrel{ \sim }{ \longrightarrow } }
\newcommand{ \preamble }[1]{\begin{center}\emph{#1}\end{center}}
\newcommand{ \textaftermath } [1] { \hspace{5pt} \text{#1} }
\theoremstyle{plain}
\newtheorem{lemma}{Lemma}[subsection]
\newtheorem{prop}[lemma]{Proposition}
\newtheorem{theorem}[lemma]{Theorem}
\newtheorem{corollary}[lemma]{Corollary}
\newtheorem*{introresult}{Result}
\theoremstyle{definition}
\newtheorem*{defn}{Definition}
\newtheorem*{notation}{Notation}
\theoremstyle{remark}
\newtheorem*{remark}{Remark}
\newtheorem*{example}{Example}
\newtheorem{examplenum}[lemma]{Example}
\begin{document}

\title{On the Canonical Representatives of a Finite Weyl Group}

%\date{\today}

\author{Sean Rostami}

\address{
\begin{flushleft}
University of Wisconsin\newline
Department of Mathematics\newline
480 Lincoln Dr.\newline
Madison WI 53706
\end{flushleft}
}

\email{srostami@math.wisc.edu}
\email{sean.rostami@gmail.com}

\subjclass[2010]{20G15, 20F05, 20F55, 22E50}

% arXiv abstract
%Let K be a field and G a split connected reductive affine algebraic K-group. Let T be a split maximal torus of G, W its finite Weyl group, and R its root system. After fixing a realization of R in G and choosing a simple system for R, one gets a system of representatives for W in G(K). It is well-known that the corresponding section N of W is rarely a group homomorphism, and it is necessary for some questions to understand and quantify this failure. Various new formulas are given which constitute progress in this direction. An application of such formulas to the simple supercuspidals of Gross-Reeder and Reeder-Yu is provided.

\begin{abstract}
Let $ \field $ be a field and $ G $ a split connected reductive affine algebraic $ \field $-group. Let $ T \subset G $ be a split maximal torus, $ \finiteweylgroup $ its finite Weyl group, and $ \roots $ its root system. After fixing a realization of $ \roots $ in $ G $ and choosing a simple system $ \simpleroots \subset \roots $, one gets a section $ \canonicalsection $ of the canonical map $ N_G(T)(\field) \rightarrow \finiteweylgroup $, whose images are called the \emph{Canonical Representatives}. It is well-known that $ \canonicalsection $ is rarely a group homomorphism, and it is necessary for some questions to understand and quantify this failure. Various new formulas are given which constitute progress in this direction. An application of such formulas to the simple supercuspidals of Gross-Reeder and Reeder-Yu is provided. It seems that more can be said about $ \canonicalsection $, and this will be pursued in subsequent work.
\end{abstract}

\keywords{finite Weyl group, canonical representatives, Tits representatives, Tits group, affine generic character, simple supercuspidal, stable functional, epipelagic}

\maketitle

\tableofcontents

\section*{Introduction}

\subsection*{Motivation and Results}

Fix a field $ \field $. Let $ G $ be a split connected reductive affine algebraic $ \field $-group and $ T \subset G $ a split maximal torus with normalizer $ N_G(T) $ and finite Weyl group $ \finiteweylgroup \defeq N_G(T)(\field) / T(\field) $. Let $ \roots $ be the root system of $ G $ relative to $ T $.

For both computational and theoretical purposes, it is frequently necessary to work with representatives in $ N_G(T)(\field) $ of $ \finiteweylgroup $. A natural example of such a purpose, to determine Bushnell-Kutzko types for certain supercuspidal representations of $ G ( \field ) $ when $ \field $ is $ p $-adic, is given below. For an example from Lie Groups, the algorithmic construction and classification of strong real forms, see \cite{ducloux}.

It is frequently impossible to find a system of representatives for $ \finiteweylgroup $ which forms a \emph{subgroup} of $ N_G(T)(\field) $, i.e. to find a homomorphic section to the canonical map $ N_G(T)(\field) \rightarrow \finiteweylgroup $. This can be seen already for $ G = \SL(2) $ with $ T $ the diagonal torus (if $ \mychar ( \field ) \neq 2 $), but the failure can also occur for adjoint groups.

However, given a sufficiently nice parameterization of $ G $ (in technical terms, a ``based realization of $ \roots $ in $ G $''), which always exists, a well-behaved system of representatives may be defined uniformly. These are called the \emph{Canonical Representatives} and the resulting section of $ N_G(T)(\field) \rightarrow \finiteweylgroup $, usually non-homomorphic, is denoted $ \canonicalsection $. It is desired to understand these representatives as much as possible, especially to understand and quantify the failure of $ \canonicalsection $ to be a group homomorphism.

There is a clean, although not ``closed-form'', description of the obstruction:
\begin{introresult}[\ref{Pgeneralformulafortwococycle}]
Fix a simple system $ \simpleroots \subset \roots $ and let $ \positiveroots \subset \roots $ be the corresponding positive system. If $ u, v \in \finiteweylgroup $ and $ \flippingset ( u, v ) \defeq \{ b \in \positiveroots \suchthat v ( b ) \in - \positiveroots, u ( v ( b ) ) \in \positiveroots \} $ then 
\begin{equation*}
\canonicalsection ( u ) \cdot \canonicalsection ( v ) = \canonicalsection ( u \cdot v ) \cdot \prod_{b \in \flippingset ( u, v )} b^{\vee} ( -1 )
\end{equation*}
\end{introresult}

\begin{remark}
It is perhaps worth mentioning that, in addition to standard universal facts about Coxeter groups, the previous result depends only on two special properties of $ \canonicalsection $, which I isolate here for convenience of the reader: (1) If $ s $ is a simple reflection corresponding to $ \alpha \in \simpleroots $ then $ \canonicalsection ( s )^2 = \alpha^{\vee} ( - 1 ) $, and (2) $ \canonicalsection ( u \cdot v ) = \canonicalsection ( u ) \cdot \canonicalsection ( v ) $ whenever $ \ell ( u \cdot v ) = \ell ( u ) + \ell ( v ) $, where $ \ell $ is the length function for $ \simpleroots $.
\end{remark}

If one is concerned only with the action of $ \canonicalsection ( w ) $ on the root subgroups $ U_a $ ($ a \in \roots $), or if one is content to ignore the center $ Z ( G ) $ for some other reason, then the question becomes more numerical: 

\begin{center}
\emph{The element $ \canonicalsection ( u \cdot v )^{-1} \cdot \canonicalsection ( u ) \cdot \canonicalsection ( v ) \in T ( \field ) $ is, as an automorphism \\of $ U_a $, the scalar $ ( -1 )^{F_{u,v} ( a )} $ for $ F_{u,v} \defeq \sum_{b \in \flippingset ( u, v )} \langle \emptyargument, b^{\vee} \rangle $.}
\end{center}

The ``diagonal obstruction'', $ \flippingset ( w, w ) $ is especially important, as it describes the difference between $ \canonicalsection ( w )^2 $ and $ \canonicalsection ( w^2 ) $. The following general formula is not difficult to prove:
\begin{introresult}[\ref{Pfirstformula}]
Let $ \lengthset ( w ) \defeq \{ b \in \positiveroots \suchthat w ( b ) \in - \positiveroots \} $, and recall that $ \# \lengthset ( w ) $ is the length of $ w $ relative to $ \simpleroots $. 

If $ w \in \finiteweylgroup $ and $ a \in \roots $ then
\begin{equation*}
\sum_{b \in \lengthset ( w )} \langle a, b^{\vee} \rangle = \height ( a ) - \height ( w ( a ) )
\end{equation*}
where $ \height : \roots \rightarrow \Z $ is the height function $ \sum_i m_i \alpha_i \mapsto \sum_i m_i $ ($ \alpha_i \in \simpleroots $).

Read alternatively, the formula expresses $ \sum_{b \in \lengthset ( w )} b^{\vee} $ as a linear combination of $ \hat{\rho} $ and $ w^{-1} ( \hat{\rho} ) $ for $ \hat{\rho} \defeq \frac{1}{2} \sum_{b \in \positiveroots} b^{\vee} $, since $ \height ( \emptyargument ) = \langle \emptyargument, \hat{\rho} \rangle $.
\end{introresult}

If $ | w | \leq 2 $ then this formula is a special case of the diagonal obstruction since, in this case, $ \flippingset ( w, w ) = \lengthset ( w ) $.

If $ | w | > 2 $ then a similar but significantly more complicated formula can be given in an important special case, and likely many other analogous cases:
\begin{introresult}[\ref{Tmainformula}]
Let $ W $ be the extended affine Weyl group of $ G $ relative to $ T $, which has $ \finiteweylgroup $ as a quotient. Let $ \prW : W \rightarrow \finiteweylgroup $ be the obvious projection and let $ \Omega \subset W $ be the stabilizer of the fundamental alcove corresponding to $ \simpleroots $. 

If $ w \in \prW ( \Omega ) $ and $ | w | \geq 3 $ then, for all $ a \in \roots $,
\begin{equation*}
h \cdot \sum_{b \in \flippingset ( w, w )} \langle a, b^{\vee} \rangle = n \cdot [ \height ( a ) - 2 \cdot \height ( w ( a ) ) + \height ( w^2 ( a ) ) ]
\end{equation*}
where $ h $ is the Coxeter Number of $ \roots $ and $ n $ is the size of the fibers of a certain projection map (the details of this projection map are explained in \S\ref{SSformulasforalcovestabilizers}).

Read alternatively, the formula expresses $ \sum_{b \in \flippingset ( w, w )} b^{\vee} $ as a linear combination of $ \hat{\rho} $, $ w^{-1} ( \hat{\rho} ) $, $ w^{-2} ( \hat{\rho} ) $.
\end{introresult}

One application of such formulas is to the topic of ``simple supercuspidals'' of $ G $, which is described next. Assume now that $ \field $ is a $ p $-adic field and denote by $ \C $ the field of complex numbers.

For (almost-)simple and simply-connected split $ G $, \cite{GR} introduces a special kind of supercuspidal representation of $ G ( \field ) $, called a \emph{simple supercuspidal}. Let $ \iwahorisubgroup \subset G ( \field ) $ be the Iwahori subgroup fixing the alcove corresponding to $ \simpleroots $ and let $ \prounipotentiwahori \subset \iwahorisubgroup $ be its pro-unipotent radical. Simple supercuspidals are constructed very concretely by building a character $ \chi : \prounipotentiwahori \cdot Z ( G ) \rightarrow \C^{\times} $, called an \emph{affine generic character}, and compactly inducing $ \chi $ to $ G ( \field ) $. Such representations have arithmetic importance that is explained in more detail by \cite{GR}.

The idea of \cite{GR} is extended by \cite{RY} via the notion of a \emph{stable functional}, which is a suitably generic linear functional $ \lambda $ on a certain finite-dimensional $ \resfield $-vector space $ \mathbf{V} $ coming from the Moy-Prasad filtration at a point $ x $ in the Bruhat-Tits building of $ G $. There are multiple senses in which \cite{RY} is a generalization of \cite{GR}: the group is allowed to be semisimple, it is only required that $ G $ splits over a tamely-ramified extension of $ \field $, and the inducing subgroup is no longer required\footnote{In retrospect, the situation of \cite{RY} specializes mostly to the situation of \cite{GR} when $ x $ is chosen to be the barycenter of an alcove.} to come from an Iwahori subgroup. If $ \lambda $ is such a functional then \cite{RY} composes $ \lambda $ with an additive character $ \resfield \rightarrow \C^{\times} $ and compactly induces the resulting $ \mathbf{V} \rightarrow \C^{\times} $ to get a representation $ \pi $. The representation $ \pi $ splits into a direct sum of irreducible representations which are supercuspidal and, by construction, satisfy the authors' definition of \emph{epipelagic}. Each of these irreducibles is known to be compactly induced from the subgroup $ \fixer ( \lambda ) \subset G ( \field ) $ that fixes $ \lambda $, and so it is worthwhile to understand this fixing subgroup.

Now, assume that $ \lambda $ is such that the resulting $ \mathbf{V} \rightarrow \C^{\times} $ is an affine generic character (this is a situation more general than \cite{GR} but less general than \cite{RY}). It is clear that $ Z ( G ), \prounipotentiwahori \subset \fixer ( \lambda ) $. However, there are $ g \in G ( \field ) $ which stabilize $ \alcove $ (and which therefore fix $ x $ and operate on the domain $ \mathbf{V} $ of $ \lambda $) but for which $ g \notin \iwahorisubgroup $. It turns out that there are very special representatives in $ G ( \field ) $ of each of these elements which indeed fix $ \lambda $:

\begin{introresult}[\ref{Tupdatedtheorem}, \ref{Pproductdecompofcharacterfixer}, \ref{Cfactorizationoflargestcompact}]
There is a section $ \goodsectionext $ of the canonical map $ N_G(T)(\field) \rightarrow W $ such that $ \goodsectionext ( \Omega ) \subset \fixer ( \lambda ) $. Further, $ \fixer ( \lambda ) $ factors as $ \fixer ( \lambda ) = \goodsectionext ( \Omega_{\lambda} ) \cdot \compactcenter \cdot \prounipotentiwahori $ where $ \compactcenter $ is the maximal compact subgroup of $ Z ( G ) ( \field ) $. Finally, $ \fixer ( \lambda ) $ is open and compact-mod-center and its unique maximal compact subgroup $ J $ has the factorization $ J = \goodsectionext ( \Omega_{\tor} ) \cdot \compactcenter \cdot \prounipotentiwahori $, for $ \Omega_{\tor} \subset \Omega $ the torsion subgroup.
\end{introresult}

There various natural ways in which all the results above might be generalized, and I hope to pursue this in a subsequent work.

\subsection*{Outline}

In \S\textbf{\ref{Snotation}}, I merely set some notation/hypotheses and recall some standard notions. In \S\textbf{\ref{Scanonicalrepresentatives}}, I supply the usual definitions of the Canonical Representatives and their most important properties for the convenience of the reader. The longest of the sections, \S\textbf{\ref{Sformulas}}, contains all of the general formulas regarding the failure of the Canonical Section to be homomorphic. The general theme here is that this failure can be expressed by certain linear functionals $ V \rightarrow \R $ on the root space and, surprisingly, these functionals turn out to be very special linear combinations of $ \finiteweylgroup $-orbits of the height function $ \height : V \rightarrow \R $. The precise form of these linear combinations is dictated by the fine additive structure of the root system $ \roots $. In \S\textbf{\ref{Scharacters}}, I introduce some easy cohomological objects and lemmas that will be convenient for the application to simple supercuspidals. Finally, in \S\textbf{\ref{Saffgencharapp}}, I use the material from previous sections to give the above description of $ \fixer ( \lambda ) $. This can be divided roughly into three parts. The first part is to show that the good section $ \goodsectionext $ exists in the case that $ G $ is essentially adjoint--nearly all of the difficulty here is related to the Canonical Representatives. The second part is to ``lift'' the $ \goodsectionext $ to more general $ G $ from the adjoint quotient $ G_{\ad} $--nearly all of the difficulty here is related to the connecting map $ \del $ in the long exact sequence of Galois cohomology coming from $ 1 \rightarrow Z ( G ) \rightarrow G \rightarrow G_{\ad} \rightarrow 1 $. The third part is a straightforward argument showing that $ \goodsectionext ( \Omega ) $ together with some obvious contributions exhausts $ \fixer ( \lambda ) $.

\subsection*{Acknowledgements}

I thank Moshe Adrian for introducing me to the topic of epipelagic representations, which is what eventually led me to study the Canonical Representatives, and for many good conversations about the subject. I thank Nathan Clement for explaining the Hilbert-Mumford Criterion to me, which allowed me to perform the calculations needed in my ongoing attempt to generalize the above results. I thank Daniel Sage and Christopher Bremer for their suggestions following a talk at Louisiana State University. I thank Jeff Adams and Paul Terwilliger for their interest and comments on earlier versions of some of the results below. Finally, the \texttt{MAGMA} software package was used to investigate many things during the resolutions of the above questions.

\section{General Notation and Hypotheses} \label{Snotation}

Let $ \N $ be the monoid of natural numbers. Let $ \Z $ be the ring of integers. Let $ \R $ be the field of real numbers. If $ S $ is a set then $ \# S $ denotes the cardinality of $ S $. If $ f : S \rightarrow S $ is a function and $ T \subset S $ then $ f $ ``stabilizes'' (resp. ``fixes'') $ T $ iff $ f ( t ) \in T $ (resp. $ f ( t ) = t $) for all $ t \in T $. If $ \Gamma $ is a group then its identity element is denoted $ 1_{\Gamma} $. The subgroup of $ g \in \Gamma $ for which there exists $ n \in \N $, $ n \neq 0 $ such that $ g^n = 1_{\Gamma} $ is denoted $ \Gamma_{\tor} $, the torsion subgroup. If $ g \in \Gamma_{\tor} $ then the minimal such $ n $ is denoted $ | g | $, the order of $ g $. Denote by $ \Ga $ and $ \Gm $ the usual $ \Z $-group functors that assign to each commutative ring $ R $ the groups $ R $ and $ R^{\times} $. Similarly, if $ n \in \N $, $ n \neq 0 $ then $ \MU_n $ is the functor that assigns to each commutative ring $ R $ the group $ \{ r \in R^{\times} \suchthat r^n = 1_R \} $.

Let $ \field $ be a non-archimedean local field, and assume that $ \mychar ( \field ) = 0 $. Let $ G $ be a split connected reductive affine algebraic $ \field $-group. Let $ T \subset G $ be a maximal torus that is $ \field $-split. Let $ Z ( G ) $ be the center of $ G $, set $ G_{\ad} \defeq G / Z ( G ) $ (the adjoint quotient), and let $ T_{\ad} \subset G_{\ad} $ be the image of $ T $, a $ \field $-split maximal torus in $ G_{\ad} $. Let $ \chargroup ( T ) $ and $ \cochargroup ( T ) $ be, respectively, the character group and cocharacter group of $ T $. Let $ \langle \emptyargument, \emptyargument \rangle : \chargroup ( T ) \times \cochargroup ( T ) \rightarrow \Z $ be the natural pairing. Sometimes $ \chargroup ( T ) $ and $ \cochargroup ( T ) $ will be abbreviated to $ X $ and $ X^{\vee} $. Similarly, $ \chargroup ( T_{\ad} ) $ and $ \cochargroup ( T_{\ad} ) $ will sometimes be abbreviated to $ X_{\ad} $ and $ X_{\ad}^{\vee} $. Let $ N_G(T) $ be the normalizer of $ T $ and set $ \finiteweylgroup \defeq N_G(T) ( \field ) / T ( \field ) $, the finite Weyl group of $ G $ relative to $ T $.

Let $ \roots \subset X $ be the root system of $ G $ relative to $ T $. \emph{Assume that $ \roots $ is irreducible, i.e. that $ G_{\ad} $ is (almost-)simple.} Let $ V $ be the $ \R $-vector space spanned by $ \roots $ in $ X \otimes_{\Z} \R $ and $ Q \subset V $ the root lattice. Similarly, let $ V^{*} $ be the dual space of $ V $, let $ \roots^{\vee} \subset V^{*} $ be the coroot system, and let $ Q^{\vee} \subset V^{*} $ be the coroot lattice. If $ a \in \roots $ then denote by $ a^{\vee} \in \roots^{\vee} $ the corresponding coroot. The action of $ \finiteweylgroup $ on $ V^{*} $ is the customary dual action $ w ( v^{*} ) \defeq v^{*} \circ w^{-1} $, the pairing $ \langle \emptyargument, \emptyargument \rangle $ is clearly $ \finiteweylgroup $-invariant, and $ w ( a )^{\vee} = w ( a^{\vee} ) $ for all $ a \in \roots $.

In \S\ref{Sformulas} it will be necessary, for $ a, b \in \roots $, to consider the ``$ a $-string through $ b $'' (Proposition 9 Ch VI \S1 no. 3 \cite{bourbakiII}):
\begin{center}
\emph{The set of all $ i \in \Z $ for which $ b + i a \in \roots $ is precisely $ [ -q, p ] \cap \Z $ \\for some $ p, q \in \N $ which necessarily satisfy $ \langle a, b^{\vee} \rangle = q - p $.}
\end{center}

Let $ h $ be the Coxeter Number of $ \finiteweylgroup $. Fix a simple system $ \simpleroots \subset \roots $ and let $ \positiveroots \subset \roots $ be the corresponding positive system. I abuse notation and use the symbol $ \simpleroots $ also to denote the generating set of $ \finiteweylgroup $. The statement ``$ a \in \positiveroots $'' (resp. ``$ a \in - \positiveroots $'') is sometimes abbreviated to ``$ a > 0 $'' (resp. ``$ a < 0 $''). Let $ \ell : \finiteweylgroup \rightarrow \N $ be the length function relative to $ \simpleroots $. For $ w \in \finiteweylgroup $, define $ \lengthset ( w ) \defeq \{ a > 0 \suchthat w ( a ) < 0 \} $, i.e. $ \lengthset ( w ) = \positiveroots \cap w^{-1} ( \positiveroots ) $. Recall that $ \ell ( w ) = \# \lengthset ( w ) $. Let $ \height : \roots \rightarrow \Z $ be the height function relative to $ \simpleroots $, i.e. if $ b = m_1 \alpha_1 + \cdots + m_n \alpha_n $ for $ \alpha_1, \ldots, \alpha_n \in \simpleroots $ then $ \height ( b ) = m_1 + \cdots + m_n $. Denote by $ \highestroot $ the Highest Root, relative to $ \simpleroots $. Finally, set $ \hat{\rho} \defeq \frac{1}{2} \sum_{a \in \positiveroots} a^{\vee} $, and recall that $ \height ( b ) = \langle b, \hat{\rho} \rangle $ (Corollary Ch VI \S1 no. 10 \cite{bourbakiII}).

Let $ \compacttorus \subset T ( \field ) $ be the unique maximal compact open subgroup and define $ W \defeq N_G(T)(\field) / \compacttorus $, the extended affine Weyl group of $ G $ relative to $ T $. Inside the Bruhat-Tits building $ \building_{\ad} $ of $ G_{\ad} $, fix an origin in the apartment for $ T_{\ad} $ and let $ \alcove $ be the alcove at the origin corresponding to the simple system $ \simpleroots $. Via the quotient map $ G ( \field ) \rightarrow G_{\ad} ( \field ) $, $ G ( \field ) $ acts on $ \building_{\ad} $ and $ W $ acts on the apartment for $ T_{\ad} $. Let $ W_{\aff} \subset W $ be the affine Weyl group and let $ \Omega \subset W $ be the subgroup that stabilizes $ \alcove $. It is standard that $ W \cong W_{\aff} \rtimes \Omega $ and, on the other hand, that $ W \cong X^{\vee} \rtimes \finiteweylgroup $. Let $ \prW : W \rightarrow \finiteweylgroup $ be the projection, which will frequently be applied to $ \Omega $. The alcove $ \alcove $ gives $ W_{\aff} $ the structure of a Coxeter group: the union of the reflections in $ \simpleroots $ together with the reflection across the nullspace of $ 1 - \highestroot $ is a Coxeter generating set $ \simpleaffineroots $. I abuse notation and also use $ \simpleaffineroots $ to denote $ \simpleroots \cup \{ 1 - \highestroot \} $. It will also be convenient to set $ \simpleaffinegradients \defeq \simpleroots \cup \{ - \highestroot \} $, the ``gradients'' of $ \simpleaffineroots $. Denote by $ \barycenter \in \alcove $ the barycenter, which is the unique point at which the value $ a ( \barycenter ) $ is independent of $ a \in \simpleaffineroots $. It is clear from the definitions that $ \Omega $ permutes $ \simpleaffineroots $ and therefore fixes $ \barycenter $.

Denote by $ \Omega_{\ad} $ the object for $ G_{\ad} $ that is analogous to $ \Omega $, i.e. the stabilizer of $ \alcove $ in the extended affine Weyl group of $ G_{\ad} $ relative to $ T_{\ad} $. It can be proved that if $ \sigma \in \finiteweylgroup $ permutes $ \simpleaffinegradients $ then $ \sigma = \prW ( \omega ) $ for some $ \omega \in \Omega_{\ad} $. There is a canonical group homomorphism $ \Omega \rightarrow \Omega_{\ad} $ which is compatible with the actions of $ G ( \field ) $ and $ G_{\ad} ( \field ) $ on the Bruhat-Tits building of $ G_{\ad} $, and the image of $ \omega \in \Omega $ in $ \Omega_{\ad} $ will be denoted by $ \omega_{\ad} $.
%I may assume that $ \sigma \neq 1 $, in which case $ \sigma ( -\highestroot ) \neq -\highestroot $ (otherwise, $ \sigma $ would stabilize $ \simpleroots $). It suffices to find $ x \in X_{\ad}^{\vee} $ such that $ x ( \sigma ( \simpleaffineroots ) ) = \simpleaffineroots $ and to choose $ \omega = x \rtimes \sigma $. Let $ \alpha \in \simpleroots $ be such that $ \sigma ( \alpha ) = -\highestroot $. By definition of the translation action of $ X_{\ad}^{\vee} $ on the affine root system, it therefore suffices to find $ x \in X_{\ad}^{\vee} $ with the following three properties: $ \langle -\highestroot, x \rangle = 1 $, $ \langle \alpha, x \rangle = -1 $, and $ \langle \beta, x \rangle = 0 $ for all $ \beta \in \simpleroots $, $ \beta \neq \alpha $. Since $ m_{\alpha} = 1 $ by part (\ref{Ltoolbox_multiplicitiespreserved}), the first property is implied by the other two. It therefore suffices to find $ x \in X_{\ad}^{\vee} $ such that $ \langle \beta, x \rangle = - \delta_{\alpha, \beta} $ (Kronecker $ \delta $) for all $ \beta \in \simpleroots $. But this is immediate from the fact that $ X_{\ad}^{\vee} $ is the $ \Z $-dual of the root lattice $ Q $.

Finally, consider the following two $ \Omega_{\ad} $-actions:
\begin{itemize}
\item Via the isomorphism $ \Omega_{\ad} \directedisom X_{\ad}^{\vee} / Q^{\vee} $ and item (XII) of the Plates at the end of \cite{bourbakiII}, $ \Omega_{\ad} $ transforms the Affine Dynkin Diagram of $ \roots $.

\item Via the projection $ \prW : \Omega_{\ad} \rightarrow \finiteweylgroup $, $ \Omega_{\ad} $ permutes $ \simpleaffinegradients $.
\end{itemize}
It will be important to remember that, by Ch VI \S4 no. 3 \cite{bourbakiII}, the vertex set of the Affine Dynkin Diagram is precisely $ \simpleaffinegradients $ and the induced permutation of the vertices by $ \omega \in \Omega_{\ad} $ is precisely the permutation of $ \simpleaffinegradients $ by $ \prW ( \omega ) \in \finiteweylgroup $.

\section{Realizations and Canonical Representatives} \label{Scanonicalrepresentatives}

For each $ a \in \roots $, denote by $ U_a \subset G $ the unique subgroup that is the image of an injective morphism $ \Ga \rightarrow G $ and on which $ T $ acts, in coordinates, via $ a $.

Recall\footnote{\S8.1 \cite{springer}} that a ``realization of $ \roots $ in $ G $'' is a collection of isomorphisms $ u_a : \Ga \directedisom U_a $ satisfying certain properties, including the property that for $ a \in \simpleroots $ the element $ n_a \defeq u_a ( 1 ) \cdot u_{-a} ( -1 ) \cdot u_a ( 1 ) \in N_G(T)(\field) $ represents the generator $ s_a \in \finiteweylgroup $.

\begin{notation}
For the rest of the paper, a realization $ \{ u_a \}_{a \in \roots} $ of $ \roots $ in $ G $ is fixed and $ n_a $ is defined as above for all $ a \in \simpleroots $.
\end{notation}

Because the representatives $ n_a $ ($ a \in \simpleroots $) satisfy\footnote{Proposition 9.3.2 \cite{springer}} a ``braid relation'', the element $ n_{a_1} \cdots n_{a_{\ell}} \in N_G(T)(\field) $ is the same for all reduced expressions $ w = s_1 \cdots s_{\ell} $ ($ s_i $ the reflection for $ a_i \in \simpleroots $). Hence, one may extend the representatives to all elements of $ \finiteweylgroup $ by setting $ n_w \defeq n_{a_1} \cdots n_{a_{\ell}} $ for any/all reduced expressions $ w = s_1 \cdots s_{\ell} $.

\begin{defn}[Canonical Section]
The section of the canonical map $ N_G(T)(\field) \rightarrow \finiteweylgroup $ defined by $ w \mapsto n_w $ is called the \emph{Canonical Section} and denoted $ \canonicalsection : \finiteweylgroup \rightarrow N_G(T)(\field) $.
\end{defn}

Two key properties of $ \canonicalsection $ are:
\begin{itemize}
\item $ \canonicalsection ( s )^2 = a^{\vee} ( -1 ) $ for $ s $ the reflection corresponding to $ a \in \simpleroots $.
\item If $ \ell ( u \cdot v ) = \ell ( u ) + \ell ( v ) $ then $ \canonicalsection ( u \cdot v ) = \canonicalsection ( u ) \cdot \canonicalsection ( v ) $.
\end{itemize}

It is well-known that $ \canonicalsection $ is almost never a group homomorphism, but it is quite well-behaved nonetheless. The ultimate goal is to understand as much as possible the failure of $ \canonicalsection $ to be a group homomorphism. In homological terms, the goal is to understand as precisely as possible the $ 2 $-cocycle
\begin{align}
\finiteweylgroup \times \finiteweylgroup &\longrightarrow T(\field) \label{Ecocycle} \\
\nonumber ( u, v ) &\longmapsto \canonicalsection ( u \cdot v )^{-1} \cdot \canonicalsection ( u ) \cdot \canonicalsection ( v )
\end{align}

%The group generated by all $ n_w $ is an extension of $ \finiteweylgroup $ by the subgroup $ T(2) \subset T(\field) $ of all $ t $ for which $ t^2 = 1 $. However, this fact will not be used.

\begin{defn}
For each $ n \in N_G(T)(\field) $, representing $ w \in \finiteweylgroup $, and $ a \in \roots $, conjugation by $ n $ is an isomorphism $ U_a \rightarrow U_{ w ( a ) } $ which is, in realization-coordinates $ \Ga \rightarrow \Ga $, a scalar. Define $ c ( n, a ) \in \field^{\times} $ to be this scalar.
\end{defn}

Since $ G $ is split, it is always possible\footnote{Proposition 9.5.3 \cite{springer}} to choose a realization of $ \roots $ in $ G $ which is ``as simple as possible'', which includes the following important property:

\begin{center}
\emph{$ c ( n, a ) = \pm 1 $ for all $ a \in \roots $ and if $ a, w ( a ) \in \simpleroots $ then $ c ( n, a ) = 1 $.}
\end{center}

Such a realization is called a ``Chevalley Realization''. However, this will not be assumed until \S\ref{Scharacters}.

\section{Formulas for the Cocycle} \label{Sformulas}

\subsection{General results}

\begin{lemma}[Exchange Property for Canonical Representatives] \label{Lexchangeproperty}
Fix $ w \in \finiteweylgroup $ and $ \alpha \in \simpleroots $. Let $ s \in \finiteweylgroup $ be the reflection for $ \alpha $. \textbf{Assertion:} If $ \ell ( w \cdot s ) < \ell ( w ) $ then 
\begin{equation*}
\canonicalsection ( w ) \cdot \canonicalsection ( s ) = \canonicalsection ( w \cdot s ) \cdot \alpha^{\vee} ( -1 )
\end{equation*}
\end{lemma}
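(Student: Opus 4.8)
The plan is to reduce the claimed identity to the two key properties of $\canonicalsection$ recalled above, namely $\canonicalsection(s)^2 = \alpha^{\vee}(-1)$ and multiplicativity of $\canonicalsection$ along length-additive products. The hypothesis $\ell(w\cdot s) < \ell(w)$ is equivalent to $\ell(w) = \ell(w\cdot s) + 1$, since $s$ is a simple reflection and lengths change by exactly one under right multiplication by a simple reflection. Write $v \defeq w\cdot s$, so that $\ell(w) = \ell(v) + 1 = \ell(v) + \ell(s)$ and $w = v\cdot s$ is a length-additive product.

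First I would apply the multiplicativity property to the length-additive factorization $w = v\cdot s$, giving $\canonicalsection(w) = \canonicalsection(v)\cdot\canonicalsection(s)$. Then I would right-multiply both sides by $\canonicalsection(s)$:
\begin{equation*}
\canonicalsection(w)\cdot\canonicalsection(s) = \canonicalsection(v)\cdot\canonicalsection(s)^2 = \canonicalsection(v)\cdot\alpha^{\vee}(-1),
\end{equation*}
where the last step uses $\canonicalsection(s)^2 = \alpha^{\vee}(-1)$. Since $v = w\cdot s$ by definition, this is exactly $\canonicalsection(w)\cdot\canonicalsection(s) = \canonicalsection(w\cdot s)\cdot\alpha^{\vee}(-1)$, as desired. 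Note that $\alpha^{\vee}(-1) \in T(\field)$ is central in $T(\field)$, so there is no subtlety about the side on which this factor appears, though here it naturally lands on the right.

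There is essentially no obstacle: the only point requiring care is the elementary Coxeter-theoretic fact that $\ell(w\cdot s) < \ell(w)$ forces $\ell(w) = \ell(w\cdot s) + 1$ (equivalently, that $w$ has a reduced expression ending in $s$), which is standard and lets us invoke the length-additivity property for the factorization $w = (w\cdot s)\cdot s$. Everything else is a one-line manipulation. The lemma is really just the ``descent'' counterpart of the multiplicativity property, packaging the defect $\alpha^{\vee}(-1)$ that appears when one right-multiplies by a simple reflection that shortens $w$ rather than lengthens it; it will presumably serve as the inductive engine for the general cocycle formula (Result \ref{Pgeneralformulafortwococycle}).
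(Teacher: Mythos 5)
Your proposal is correct and takes essentially the same approach as the paper's proof. Both arguments establish the identity $\canonicalsection(w) = \canonicalsection(w\cdot s)\cdot\canonicalsection(s)$ and then multiply by $\canonicalsection(s)$ and use $\canonicalsection(s)^2 = \alpha^{\vee}(-1)$; the paper reaches that intermediate identity by invoking the Exchange Property to produce an explicit reduced word for $w$ ending in $s$, whereas you reach it more directly by observing that $\ell(w\cdot s) < \ell(w)$ forces $\ell(w) = \ell(w\cdot s) + \ell(s)$ and then applying the length-additivity property to the factorization $w = (w\cdot s)\cdot s$ — a minor repackaging of the same idea, slightly cleaner in presentation.
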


\begin{proof}
Set $ n = \ell ( w ) $ and write $ w = s_1 \cdots s_n $ for some $ s_i \in \simpleroots $. By the Exchange Property for $ ( \finiteweylgroup, \simpleroots ) $, there is $ i $ such that $ w = s_1 \cdots s_{i-1} \cdot s_{i+1} \cdots s_n \cdot s $. Since this is a reduced word for $ w $, and since $ \canonicalsection $ is homomorphic on reduced words, $ \canonicalsection ( w ) = \canonicalsection ( s_1 ) \cdots \canonicalsection ( s_{i-1} ) \cdot \canonicalsection ( s_{i+1} ) \cdots \canonicalsection ( s_n ) \cdot \canonicalsection ( s ) $. Similarly, $ \canonicalsection ( w \cdot s ) = \canonicalsection ( s_1 ) \cdots \canonicalsection ( s_{i-1} ) \cdot \canonicalsection ( s_{i+1} ) \cdots \canonicalsection ( s_n ) $. Thus, $ \canonicalsection ( w ) = \canonicalsection ( w \cdot s ) \cdot \canonicalsection ( s ) $. Since $ \canonicalsection $ has the property $ \canonicalsection ( s )^2 = \alpha^{\vee} ( -1 ) $, the claim follows.
\end{proof}

Recall the following fact\footnote{Exercise \#12(e) Ch IX \S4 \cite{bourbakiIII}} regarding $ \canonicalsection $:
\begin{center}
\emph{If $ w^2 = 1 $ then $ \canonicalsection ( w )^2 = \prod_{a \in \lengthset ( w )} a^{\vee} ( -1 ) $.}
\end{center}

It happens that a full generalization is possible with very little extra work:
\begin{defn}[Flipping Set]
For $ u, v \in \finiteweylgroup $,
\begin{equation*}
\flippingset ( u, v ) \defeq \{ a > 0 \suchthat v ( a ) < 0, u ( v ( a ) ) > 0 \}
\end{equation*}
\end{defn}

\begin{prop}[Cocycle Formula] \label{Pgeneralformulafortwococycle}
For $ u, v \in \finiteweylgroup $,
\begin{equation*}
\canonicalsection ( u ) \cdot \canonicalsection ( v ) = \canonicalsection ( u \cdot v ) \cdot \prod_{a \in \flippingset ( u, v )} a^{\vee} ( -1 )
\end{equation*}
\end{prop}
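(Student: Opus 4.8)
The plan is to prove the Cocycle Formula by induction on $ \ell ( v ) $, using only the two key properties of $ \canonicalsection $ — namely $ \canonicalsection ( s )^2 = \alpha^{\vee} ( -1 ) $ for simple $ s $, and multiplicativity of $ \canonicalsection $ on length-additive products — together with the Exchange Property established in Lemma~\ref{Lexchangeproperty} and standard combinatorics of $ \lengthset $ and $ \flippingset $.

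\textbf{Base case.} If $ \ell ( v ) = 0 $ then $ v = 1 $, so $ \flippingset ( u, v ) = \emptyset $ and both sides equal $ \canonicalsection ( u ) $. More usefully, if $ \ell ( u \cdot v ) = \ell ( u ) + \ell ( v ) $ then $ v $ sends no positive root of $ \lengthset ( v )^c $ to a negative root that $ u $ returns to positive; in fact one checks $ \flippingset ( u, v ) = \emptyset $ exactly in the length-additive case, and then the formula is just the second key property of $ \canonicalsection $. This length-additive case is what will anchor the induction.

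\textbf{Inductive step.} Suppose $ \ell ( v ) \geq 1 $ and pick a simple reflection $ s $ (for $ \alpha \in \simpleroots $) with $ \ell ( v \cdot s ) < \ell ( v ) $; write $ v = v' \cdot s $ with $ \ell ( v' ) = \ell ( v ) - 1 $. I would split into two cases according to whether $ \ell ( u \cdot v ) $ is $ \ell ( u \cdot v' ) + 1 $ or $ \ell ( u \cdot v' ) - 1 $, i.e. whether $ u v' ( \alpha ) > 0 $ or $ < 0 $. In the first case $ \canonicalsection ( u v ) = \canonicalsection ( u v' ) \cdot \canonicalsection ( s ) $ by length-additivity; in the second case, Lemma~\ref{Lexchangeproperty} gives $ \canonicalsection ( u v' ) \cdot \canonicalsection ( s ) = \canonicalsection ( u v ) \cdot \alpha^{\vee} ( -1 ) $ (applied with $ w = u v' $, noting $ uv = uv' s $ and $ \ell ( uv' s ) < \ell ( uv' ) $). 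Similarly $ \canonicalsection ( v ) = \canonicalsection ( v' ) \cdot \canonicalsection ( s ) $ since $ \ell ( v ) = \ell ( v' ) + 1 $. Substituting, $ \canonicalsection ( u ) \canonicalsection ( v ) = \canonicalsection ( u ) \canonicalsection ( v' ) \canonicalsection ( s ) $, and by the inductive hypothesis applied to the pair $ ( u, v' ) $ this equals $ \canonicalsection ( u v' ) \cdot \bigl( \prod_{b \in \flippingset ( u, v' )} b^{\vee} ( -1 ) \bigr) \cdot \canonicalsection ( s ) $. Now I push $ \canonicalsection ( s ) $ to the left past the torus element, which conjugates $ \prod b^{\vee} ( -1 ) $ to $ \prod s ( b )^{\vee} ( -1 ) $ (using $ s ( b^{\vee} ) = s ( b )^{\vee} $ and that $ s $-conjugation is trivial on $ T ( \field ) $ up to this reflection action; since each $ b^\vee(-1) $ is $2$-torsion, $ s ( b )^{\vee} ( -1 ) = ( s ( b )^{\vee} ( -1 ) ) $ regardless of sign of $ s ( b ) $). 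Then I use the appropriate expression for $ \canonicalsection ( u v' ) \canonicalsection ( s ) $ from the two cases above, arriving at $ \canonicalsection ( u v ) $ times a product of $ c^\vee(-1) $'s (possibly with an extra $ \alpha^\vee(-1) $ in the second case).

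\textbf{The bookkeeping.} The crux is a purely combinatorial identity: the multiset $ \{ s ( b ) : b \in \flippingset ( u, v' ) \} $, corrected by $ \{ \alpha \} $ in the case $ u v' ( \alpha ) < 0 $ and uncorrected otherwise, equals $ \flippingset ( u, v ) $ up to signs (and signs are irrelevant since $ ( - c )^\vee ( -1 ) = c^\vee ( -1 ) $, as $ c^\vee(-1) $ has order dividing $2$). Concretely: $ b' \in \flippingset ( u, v ) $ means $ b' > 0 $, $ v ( b' ) < 0 $, $ u v ( b' ) > 0 $; writing $ b = s ( b' ) $ one analyzes whether $ b > 0 $ and tracks $ v' ( b ) = v s ( b ) $, separating the single root $ b' = \alpha $ (on which $ s $ acts by negation and which is the one place the correction term enters) from all others (on which $ s $ permutes $ \positiveroots \setminus \{ \alpha \} $). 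This is exactly the standard argument relating $ \lengthset ( v ) $ to $ \lengthset ( v' ) $ via $ s $, refined by the extra condition involving $ u $. I expect this combinatorial matching — verifying that the sign-corrected image of $ \flippingset ( u, v' ) $ under $ s $, together with the conditional $ \{ \alpha \} $, is precisely $ \flippingset ( u, v ) $ — to be the main obstacle; everything else is a mechanical substitution once the two key properties of $ \canonicalsection $ and Lemma~\ref{Lexchangeproperty} are in hand.
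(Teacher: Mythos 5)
Your proposal is correct and rests on the same key tool as the paper, Lemma~\ref{Lexchangeproperty}, applied once per letter of a reduced word for $v$; the paper simply unrolls your induction into a single left-to-right pass through $v = s_1\cdots s_n$ and then identifies the resulting index set with $\flippingset(u,v)$ by a direct two-inclusion argument, whereas you isolate the one-step bookkeeping identity $\flippingset(u, v's) = s\,\flippingset(u, v')$, adjoined with $\{\alpha\}$ exactly when $uv'(\alpha)<0$. Incidentally your worry about signs is moot: since $\ell(v's)>\ell(v')$ forces $v'(\alpha)>0$, hence $\alpha\notin\flippingset(u,v')$, and $s$ permutes $\positiveroots\setminus\{\alpha\}$, the set $s(\flippingset(u,v'))$ already consists of positive roots and no sign correction is needed.
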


This generalizes the previous fact since $ w^2 = 1 $ implies that $ \flippingset ( w, w ) =  \lengthset ( w ) $.

\begin{proof}
Let $ n = \ell ( v ) $ and write $ v = s_1 \cdots s_n $ for some $ s_i \in \simpleroots $. For each $ i $, let $ \alpha_i $ be the simple root whose reflection is $ s_i $. For each $ i \in \{ 1, 2, \ldots, n \} $ define $ \theta_i = s_n \cdots s_{i+1} ( \alpha_i ) $, and recall\footnote{Corollary 2 Ch VI \S1 no. 6 \cite{bourbakiII}} that $ \lengthset ( w ) = \{ \theta_1, \ldots, \theta_n \} $. \emph{To be clear, $ \theta_n = \alpha_n $.} Define $ I $ to be the set of all $ i \in \{ 1, 2, \ldots, n \} $ such that $ \ell ( u \cdot s_1 \cdots s_{i-1} \cdot s_i ) < \ell ( u \cdot s_1 \cdots s_{i-1} ) $. \emph{To be clear, $ 1 \in I $ if and only if $ \ell ( u \cdot s_1 ) < \ell ( u ) $.} Set $ \Theta = \{ \theta_i \suchthat i \in I \} $. By repeated use of Lemma \ref{Lexchangeproperty} on $ \canonicalsection ( u ) \cdot \canonicalsection ( v ) = \canonicalsection ( u ) \cdot \canonicalsection ( s_1 ) \cdots \canonicalsection ( s_n ) $, I conclude that $ \canonicalsection ( u ) \cdot \canonicalsection ( v ) = \canonicalsection ( u \cdot v ) \cdot \prod_{\theta \in \Theta} \theta^{\vee} ( -1 ) $. Therefore, the claim is $ \Theta = \flippingset ( u, v ) $. 

Let $ a \in \flippingset ( u, v ) $ be arbitrary. By definition of $ \flippingset ( u, v ) $, there is $ i \in \{ 1, 2, \ldots, n \} $ such that $ s_i \cdots s_n ( a ) \in - \positiveroots $ but $ s_{i+1} \cdots s_n ( a ) \in \positiveroots $. \emph{To be clear, it may happen that $ s_n ( a ) \in - \positiveroots $, which is what is meant in the case that $ i = n $.} Since $ s_i $ permutes $ \positiveroots \setminus \{ \alpha_i \} $, it must be true that $ s_{i+1} \cdots s_n ( a ) = \alpha_i $ and so $ a = s_n \cdots s_{i+1} ( \alpha_i ) = \theta_i $. \emph{To be clear, in the case that $ i = n $, the conclusion is that $ a = \alpha_n $.} To show that $ a \in \Theta $, I must show that $ \ell ( u \cdot s_1 \cdots s_{i-1} \cdot s_i ) < \ell ( u \cdot s_1 \cdots s_{i-1} ) $. This is equivalent\footnote{Lemma (b) \S1.6 \cite{humphreysCG}} to the statement that $ u \cdot s_1 \cdots s_{i-1} ( \alpha_i ) \in - \positiveroots $, which is immediate from the definitions:
\begin{equation*}
u \cdot s_1 \cdots s_{i-1} ( \alpha_i ) = - u \cdot s_1 \cdots s_{i-1} \cdot s_i ( \alpha_i ) = - u \cdot s_1 \cdots s_n ( a ) = - u ( v ( a ) ) \in - \positiveroots
\end{equation*}
This establishes that $ \Theta \supset \flippingset ( u, v ) $.

Let $ \theta \in \Theta $ be arbitrary. Since $ \theta \in \positiveroots $ and $ v ( \theta ) \in - \positiveroots $, I must show only that $ u ( v ( \theta ) ) \in \positiveroots $. Let $ i \in I $ be such that $ \theta = \theta_i $. Note that  
\begin{equation*}
u ( v ( \theta ) ) = u ( v ( s_n \cdots s_{i+1} ( \alpha_i ) ) ) = u ( s_1 \cdots s_i ( \alpha_i ) ) = - u ( s_1 \cdots s_{i-1} ( \alpha_i ) )
\end{equation*}
So, to show $ u ( v ( \theta ) ) \in \positiveroots $, it is equivalent to show $ u ( s_1 \cdots s_{i-1} ( \alpha_i ) ) \in - \positiveroots $. This is equivalent\footnote{Lemma (b) \S1.6 \cite{humphreysCG}} to the statement that $ \ell ( u \cdot s_1 \cdots s_{i-1} \cdot s_i ) < \ell ( u \cdot s_1 \cdots s_{i-1} ) $, which is true by definition of $ I $. This establishes that $ \Theta \subset \flippingset ( u, v ) $.
\end{proof}

\begin{example}
Suppose $ u, v \in \simpleroots $ are distinct and let $ \alpha, \beta \in \simpleroots $ be such that $ u ( \alpha ) = - \alpha $ and $ v ( \beta ) = - \beta $. Since $ v $ permutes $ \positiveroots \setminus \{ \beta \} $, $ \flippingset ( u, v ) \subset \{ \beta \} $. Since $ \beta \neq \alpha $ and $ u $ permutes $ \positiveroots \setminus \{ \alpha \} $, $ u ( \beta ) \in \positiveroots $. Thus, $ u ( v ( \beta ) ) = u ( - \beta ) = - u ( \beta ) \in - \positiveroots $ and therefore $ \flippingset ( u, v ) = \emptyset $. This is as expected since $ \canonicalsection ( u \cdot v ) = \canonicalsection ( u ) \cdot \canonicalsection ( v ) $ whenever $ \ell ( u \cdot v ) = \ell ( u ) + \ell ( v ) $.
\end{example}

The ultimate goal is to understand $ \flippingset ( u, v ) $ to the greatest extent possible. However, since the intersection of $ \kernel ( a ) $ for all $ a \in \roots $ is $ Z ( G ) $, to understand the difference between $ \canonicalsection ( u \cdot v ) $ and $ \canonicalsection ( u ) \cdot \canonicalsection ( v ) $ as elements of $ N_G(T) $ is largely the same as to understand, for all $ a \in \roots $, the difference between the two isomorphisms $ U_a \directedisom U_{u(v(a))} $ that they induce. For this reason, certain linear functionals come into play:
\begin{notation}
$ F_{u,v} \defeq \sum_{ b \in \flippingset ( u, v ) } \langle \emptyargument, b^{\vee} \rangle $
\end{notation}

According to Proposition \ref{Pgeneralformulafortwococycle}, the difference between $ \canonicalsection ( u \cdot v ) $ and $ \canonicalsection ( u ) \cdot \canonicalsection ( v ) $ on $ U_a $ is $ ( -1 )^{F_{u,v} ( a )} $. Therefore, the goal is largely to understand the functional $ F_{u,v} $.

\subsection{The flipping set for iteration of an element}

Let $ w \in \finiteweylgroup $ be arbitrary.

\begin{notation}
$ \flippingset ( w ) \defeq \flippingset ( w, w ) = \{ a > 0 \suchthat w ( a ) < 0, w^2 ( a ) > 0 \} $. Analogously, $ F_w \defeq \sum_{b \in \flippingset ( w ) } \langle \emptyargument, b^{\vee} \rangle $.
\end{notation}
Note that this set describes the difference between $ \canonicalsection ( w^2 ) $ and $ \canonicalsection ( w )^2 $, i.e. it describes the restriction of the cocycle (\ref{Ecocycle}) to the diagonal.

Since $ \flippingset ( w ) = \lengthset ( w ) $ whenever $ | w | \leq 2 $, the following can be considered a ``first approximation'' to the goal:
\begin{prop}[Summation Formula \#1] \label{Pfirstformula}
For all $ w \in \finiteweylgroup $ and all $ a \in \roots $,
\begin{equation*}
\sum_{b \in \lengthset ( w )} \langle a, b^{\vee} \rangle = \height ( a ) - \height ( w ( a ) )
\end{equation*}
\end{prop}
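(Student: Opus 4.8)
The plan is to reduce the identity to the single vector equation $\sum_{b \in \lengthset ( w )} b^{\vee} = \hat{\rho} - w^{-1} ( \hat{\rho} )$ in $V^{*}$, which is precisely the ``read alternatively'' reformulation in the statement. Indeed, once this is known, pairing with $a \in \roots$ and using $\height ( \emptyargument ) = \langle \emptyargument, \hat{\rho} \rangle$ together with the $\finiteweylgroup$-invariance of the pairing gives $\sum_{b \in \lengthset ( w )} \langle a, b^{\vee} \rangle = \langle a, \hat{\rho} \rangle - \langle a, w^{-1} ( \hat{\rho} ) \rangle = \height ( a ) - \langle w ( a ), \hat{\rho} \rangle = \height ( a ) - \height ( w ( a ) )$, which is the claim. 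So everything comes down to computing $w^{-1} ( \hat{\rho} )$, and in fact it is cleaner to compute $w ( \hat{\rho} )$ for general $w$ and then substitute $w^{-1}$ for $w$ at the end.

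First I would expand $2 \cdot w ( \hat{\rho} ) = \sum_{a > 0} w ( a )^{\vee}$, using $w ( a^{\vee} ) = w ( a )^{\vee}$, and organize the sum according to the partition $w ( \positiveroots ) = ( w ( \positiveroots ) \cap \positiveroots ) \sqcup ( w ( \positiveroots ) \cap ( - \positiveroots ) )$. The key bookkeeping step is the elementary identification $w ( \positiveroots ) \cap ( - \positiveroots ) = \{ - c \suchthat c \in \lengthset ( w^{-1} ) \}$ — a positive root $c$ contributes $-c$ here exactly when $w^{-1} ( c ) < 0$ — and the complementary fact $w ( \positiveroots ) \cap \positiveroots = \positiveroots \setminus \lengthset ( w^{-1} )$. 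Summing coroots over these two pieces (and using $( - c )^{\vee} = - c^{\vee}$) yields $2 \hat{\rho} - \sum_{c \in \lengthset ( w^{-1} )} c^{\vee}$ from the first piece and $- \sum_{c \in \lengthset ( w^{-1} )} c^{\vee}$ from the second, hence $w ( \hat{\rho} ) = \hat{\rho} - \sum_{c \in \lengthset ( w^{-1} )} c^{\vee}$.

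Rearranging gives $\hat{\rho} - w ( \hat{\rho} ) = \sum_{c \in \lengthset ( w^{-1} )} c^{\vee}$; replacing $w$ by $w^{-1}$ produces $\hat{\rho} - w^{-1} ( \hat{\rho} ) = \sum_{b \in \lengthset ( w )} b^{\vee}$, and then the pairing computation from the first paragraph finishes the proof.

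I do not expect a real obstacle here; the only place requiring care is the combinatorial identification of $w ( \positiveroots ) \cap ( - \positiveroots )$ with $- \lengthset ( w^{-1} )$ — i.e., keeping straight whether $w$ or $w^{-1}$ appears — and assembling the two coroot sums with the correct signs. (An alternative but slightly longer route is induction on $\ell ( w )$: the base case $w = s$ a simple reflection for $\alpha \in \simpleroots$ is just $\height ( s ( a ) ) = \height ( a ) - \langle a, \alpha^{\vee} \rangle$, and the inductive step uses $\lengthset ( w s ) = s ( \lengthset ( w ) ) \sqcup \{ \alpha \}$ when $\ell ( w s ) = \ell ( w ) + 1$ together with invariance of the pairing; I would prefer to present the direct computation above and perhaps mention this inductive variant only in passing.)
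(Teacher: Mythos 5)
Your proof is correct and reduces, exactly as the paper does, to the vector identity $w^{-1}(\hat{\rho}) = \hat{\rho} - \sum_{b \in \lengthset(w)} b^{\vee}$ in $V^{*}$; the only divergence is in how that intermediate identity is established. The paper obtains it by ``repeated use'' of the simple-reflection formula $s_{\alpha}(\hat{\rho}) = \hat{\rho} - \alpha^{\vee}$ (Bourbaki, Ch.~VI \S 1 no.~10, Prop.~29(ii)), i.e.\ by induction on $\ell(w)$ along a reduced word --- precisely the inductive variant you mention at the end. You instead compute $w(\hat{\rho})$ in one shot by partitioning $w(\positiveroots)$ into $w(\positiveroots) \cap \positiveroots = \positiveroots \setminus \lengthset(w^{-1})$ and $w(\positiveroots) \cap (-\positiveroots) = -\lengthset(w^{-1})$, summing coroots over each piece, and then substituting $w \mapsto w^{-1}$; I checked both identifications and the coroot bookkeeping and they are right. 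Your route is marginally more self-contained (no appeal to the simple-reflection case or to a reduced expression) at the cost of the sign/inverse bookkeeping you already flagged, and both are standard ways to prove $\hat{\rho} - w^{-1}(\hat{\rho}) = \sum_{b \in \lengthset(w)} b^{\vee}$. The concluding step --- pairing against $a$, using $\height(\emptyargument) = \langle \emptyargument, \hat{\rho} \rangle$ and $\finiteweylgroup$-invariance of $\langle \emptyargument, \emptyargument \rangle$ --- is identical to the paper's.
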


Observe that the well-known fact $ \langle a, \hat{\rho} \rangle = \height ( a ) $, for $ \hat{\rho} \defeq \frac{1}{2} \sum_{a > 0} a^{\vee} $, is recovered from this by choosing $ w $ to be the Longest Element $ w_0 $, since $ \lengthset ( w_0 ) = \positiveroots $ and $ \height ( w_0 ( a ) ) = - \height ( a ) $.

\begin{proof}
It follows from repeated use of Proposition 29(ii) Ch VI \S1 no. 10 \cite{bourbakiII} that $ w^{-1} ( \hat{\rho} ) = \hat{\rho} - \sum_{\theta \in \lengthset ( w ) } \theta^{\vee} $. Recalling that $ \langle a, \hat{\rho} \rangle = \height ( a ) $ and using $ \finiteweylgroup $-invariance of the pairing finishes the proof.
%By splitting the sum $ \height ( R ) = \frac{1}{2} \sum_{a \in \positiveroots} \langle R, a^{\vee} \rangle $ according to the partition of $ \positiveroots $ into $ \lengthset ( w ) $ and $ \positiveroots \setminus \lengthset ( w ) $ and then rearranging the claimed identity, the claim is simply that\begin{equation*}\height ( w \cdot R ) \iseq \frac{1}{2} \sum_{a > 0; w \cdot a > 0} \langle R, a^{\vee} \rangle + \frac{1}{2} \sum_{b > 0; w \cdot b < 0} \langle R, - b^{\vee} \rangle\end{equation*}By above, $ \langle R, a^{\vee} \rangle = \langle w \cdot R, w \cdot a^{\vee} \rangle = \langle w \cdot R, ( w \cdot a )^{\vee} \rangle $ and $ \langle R, - b^{\vee} \rangle = \langle w \cdot R, ( - w \cdot b )^{\vee} \rangle $. Due to the fact that $ \height ( w \cdot R ) = \langle w \cdot R, \varrho \rangle $, it is sufficient to show merely that $ \{ w \cdot a \suchthat a > 0, w \cdot a > 0 \} \cup \{ - w \cdot b \suchthat b > 0, w \cdot b < 0 \} = \positiveroots $. This is clear from the description of each of the two sets of roots.
\end{proof}

\begin{remark}
The above formula explains some numerical curiosities in the proof (by direct computation) of the Theorem from \cite{roro}; see the proof of Corollary \ref{Csumeven} below.
\end{remark}

Here is an easy symmetry that will be useful later:
\begin{lemma}[Forward-Backward Symmetry] \label{Lsymmetry}
$ w^2 ( \flippingset ( w ) ) = \flippingset ( w^{-1} ) $ and so, for all $ a \in \roots $, $ F_w ( w^{-1} ( a ) ) = F_{w^{-1}} ( w ( a ) ) $.
\end{lemma}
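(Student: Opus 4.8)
The plan is to establish the set identity $ w^2 ( \flippingset ( w ) ) = \flippingset ( w^{-1} ) $ first, and then read off the functional identity as a formal consequence. For the set identity I would argue by a symmetric membership check. Recall $ \flippingset ( w ) = \{ a > 0 \suchthat w ( a ) < 0,\ w^2 ( a ) > 0 \} $, so that $ \flippingset ( w^{-1} ) = \{ c > 0 \suchthat w^{-1} ( c ) < 0,\ w^{-2} ( c ) > 0 \} $. Given $ a \in \flippingset ( w ) $, set $ c \defeq w^2 ( a ) $: then $ c > 0 $ by the third defining condition, $ w^{-1} ( c ) = w ( a ) < 0 $ by the second, and $ w^{-2} ( c ) = a > 0 $ by the first, so $ c \in \flippingset ( w^{-1} ) $; this gives $ w^2 ( \flippingset ( w ) ) \subset \flippingset ( w^{-1} ) $. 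Running the identical argument with $ w $ replaced by $ w^{-1} $ (so that $ ( w^{-1} )^2 = w^{-2} $) yields $ w^{-2} ( \flippingset ( w^{-1} ) ) \subset \flippingset ( w ) $, i.e. the reverse inclusion after applying $ w^2 $. Since $ w^2 $ is a bijection of $ \roots $, the two inclusions force $ w^2 $ to restrict to a bijection $ \flippingset ( w ) \directedisom \flippingset ( w^{-1} ) $.

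For the functional identity, I would use this bijection to re-index. For $ a \in \roots $, re-indexing the sum defining $ F_{w^{-1}} $ by $ c = w^2 ( b ) $ with $ b \in \flippingset ( w ) $ gives
\begin{equation*}
F_{w^{-1}} ( w ( a ) ) = \sum_{c \in \flippingset ( w^{-1} )} \langle w ( a ), c^{\vee} \rangle = \sum_{b \in \flippingset ( w )} \langle w ( a ), w^2 ( b )^{\vee} \rangle .
\end{equation*}
Now apply $ w^2 ( b )^{\vee} = w^2 ( b^{\vee} ) $ and the $ \finiteweylgroup $-invariance of the pairing (acting by $ w^{-1} $ on both arguments) to see each summand equals $ \langle a, w ( b^{\vee} ) \rangle $. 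On the other side, the same invariance applied to $ F_w ( w^{-1} ( a ) ) = \sum_{b \in \flippingset ( w )} \langle w^{-1} ( a ), b^{\vee} \rangle $ (this time moving the $ w $ onto the second argument) produces $ \sum_{b \in \flippingset ( w )} \langle a, w ( b^{\vee} ) \rangle $ as well, so the two functionals agree at every $ a $.

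I do not expect a genuine obstacle here: the content is entirely bookkeeping once one observes that $ w^2 $ is the natural transport map between the two flipping sets. The only place where care is needed is the consistent tracking of the dual action $ w ( v^{*} ) = v^{*} \circ w^{-1} $, so that the exponents $ w $, $ w^{-1} $, $ w^2 $ end up in the right slots; a stray inverse in the second step is the most plausible slip, and I would guard against it by checking both sides on a length-two element, where $ \flippingset ( w ) = \lengthset ( w ) $ and Proposition \ref{Pfirstformula} applies directly.
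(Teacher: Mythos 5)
Your proof is correct and follows essentially the same route as the paper: establish the set identity $w^2(\flippingset(w)) = \flippingset(w^{-1})$ first and deduce the functional identity by re-indexing and $\finiteweylgroup$-invariance of the pairing. The only cosmetic difference is that the paper writes $\flippingset(w) = \positiveroots \cap w^{-1}(-\positiveroots) \cap w^{-2}(\positiveroots)$ and applies $w^2$ across the intersection in one line, whereas you do the equivalent element-wise membership check in both directions; the content is identical.
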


\begin{proof}
From the definition, $ \flippingset ( w ) = \positiveroots \cap w^{-1} ( - \positiveroots ) \cap w^{-2} ( \positiveroots ) $. Since injective functions distribute across intersections, applying $ w^2 $ to this yields $ w^2 ( \flippingset ( w ) ) = w^2 ( \positiveroots ) \cap w ( - \positiveroots ) \cap \positiveroots = \flippingset ( w^{-1} ) $. This rearranges to $ w ( \flippingset ( w ) ) = w^{-1} ( \flippingset ( w^{-1} ) ) $. The second identity follows by summing $ \langle a, \emptyargument \rangle $ over these two latter sets and using $ \finiteweylgroup $-invariance of the pairing.
\end{proof}

Recalling that $ \lengthset ( w ) = \positiveroots \cap w^{-1} ( - \positiveroots ) $, the following presentation of $ \flippingset ( w ) $ highlights the ``recursive'' nature of flipping sets:

\begin{lemma}
$ \flippingset ( w ) = \lengthset ( w ) \cap w^{-1} ( - \lengthset ( w ) ) $.
\end{lemma}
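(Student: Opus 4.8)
The plan is to prove the identity purely by set-theoretic manipulation, using the fact that $ w $ is a bijection of $ \roots $ commuting with negation, and that injective maps distribute across intersections (as already exploited in the proof of Lemma \ref{Lsymmetry}).

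First I would rewrite everything in the ``intersection of half-spaces'' form. Recall $ \lengthset ( w ) = \positiveroots \cap w^{-1} ( - \positiveroots ) $ and $ \flippingset ( w ) = \positiveroots \cap w^{-1} ( - \positiveroots ) \cap w^{-2} ( \positiveroots ) $, the latter being the presentation of $ \flippingset ( w ) $ recorded in the proof of Lemma \ref{Lsymmetry}. The next step is to unfold the right-hand side of the claimed identity. Since $ w $ is linear, $ - w^{-1} ( X ) = w^{-1} ( - X ) $ for any subset $ X \subset \roots $, so negating $ \lengthset ( w ) = \positiveroots \cap w^{-1} ( - \positiveroots ) $ pointwise gives $ - \lengthset ( w ) = ( - \positiveroots ) \cap w^{-1} ( \positiveroots ) $. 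Applying $ w^{-1} $ then yields $ w^{-1} ( - \lengthset ( w ) ) = w^{-1} ( - \positiveroots ) \cap w^{-2} ( \positiveroots ) $.

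Finally I would intersect with $ \lengthset ( w ) $ itself:
\begin{equation*}
\lengthset ( w ) \cap w^{-1} ( - \lengthset ( w ) ) = \big[ \positiveroots \cap w^{-1} ( - \positiveroots ) \big] \cap \big[ w^{-1} ( - \positiveroots ) \cap w^{-2} ( \positiveroots ) \big] = \positiveroots \cap w^{-1} ( - \positiveroots ) \cap w^{-2} ( \positiveroots ),
\end{equation*}
which is exactly $ \flippingset ( w ) $, since the repeated factor $ w^{-1} ( - \positiveroots ) $ collapses.

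There is essentially no obstacle here; the only point requiring a moment's care is that $ - \lengthset ( w ) $ denotes the pointwise negation $ \{ - b \mid b \in \lengthset ( w ) \} $, and that one should start from the half-space description $ \lengthset ( w ) = \positiveroots \cap w^{-1} ( - \positiveroots ) $ rather than the ``$ a > 0 $, $ w ( a ) < 0 $'' description, so that the distribution of $ w^{-1} $ and of negation over the intersection is transparent. One could alternatively argue elementwise: $ a \in \lengthset ( w ) \cap w^{-1} ( - \lengthset ( w ) ) $ iff $ a > 0 $, $ w ( a ) < 0 $, and $ w ( a ) \in - \lengthset ( w ) $, and the last condition says $ - w ( a ) \in \lengthset ( w ) $, i.e. $ - w ( a ) > 0 $ (already known) and $ w ( - w ( a ) ) < 0 $, i.e. $ w^2 ( a ) > 0 $ — which is precisely the defining condition of $ \flippingset ( w ) $.
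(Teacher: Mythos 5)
Your proof is correct. The argument you put forward as primary—the set-algebraic manipulation starting from $\lengthset(w) = \positiveroots \cap w^{-1}(-\positiveroots)$ and $\flippingset(w) = \positiveroots \cap w^{-1}(-\positiveroots) \cap w^{-2}(\positiveroots)$, then distributing $w^{-1}$ and negation across intersections so that the repeated factor $w^{-1}(-\positiveroots)$ collapses—is a slightly different packaging than what the paper does: the paper argues elementwise, unwinding $a \in \lengthset(w) \cap w^{-1}(-\lengthset(w))$ directly into the three inequalities $a > 0$, $w(a) < 0$, $w^2(a) > 0$ defining $\flippingset(w)$. That elementwise unwinding is precisely the alternative you sketch in your final sentence, so in substance you have both proofs. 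Your set-algebraic version has the mild virtue of visibly reusing the technique already deployed in the proof of Lemma \ref{Lsymmetry} (injections distribute across intersections), which makes the ``recursive'' structure of $\flippingset(w)$ a bit more transparent; the paper's elementwise version is shorter and requires no appeal to that principle. Either is a complete proof.
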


\begin{proof}
$ \lengthset ( w ) \cap w^{-1} ( - \lengthset ( w ) ) $ is $ \{ a > 0 \suchthat w ( a ) < 0, w ( a ) \in - \lengthset ( w ) \} $, which is the same as $ \{ a > 0 \suchthat w ( a ) < 0, - w ( a ) > 0, w ( - w ( a ) ) < 0 \} $, which is the same as $ \{ a > 0 \suchthat w ( a ) < 0, w ( w ( a ) ) > 0 \} = \flippingset ( w ) $.
\end{proof}

\subsection{Explicit formulas for alcove-stabilizers} \label{SSformulasforalcovestabilizers}

In the rest of \S\ref{Sformulas}, namely this \S\ref{SSformulasforalcovestabilizers} and \S\ref{SSproofofconstantfibers} next, it is assumed that $ w \in \prW ( \Omega ) $ and that $ | w | \geq 3 $. Again, this latter assumption is justified because if $ | w | \leq 2 $ then $ \flippingset ( w ) = \lengthset ( w ) $ and so Proposition \ref{Pfirstformula} (Formula \#1) applies.

\begin{lemma} \label{Lordersameorbitmaximal}
If $ \omega \in \Omega $, $ \sigma := \prW ( \omega ) $, and $ \orbitsymbol $ is the $ \omega $-orbit of the simple affine root $ 1 - \highestroot $, then $ \vert \sigma \vert = \vert \omega_{\ad} \vert = \# \orbitsymbol $.
\end{lemma}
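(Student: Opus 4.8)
The plan is to establish the two equalities $\vert\sigma\vert = \#\orbitsymbol$ and $\vert\omega_{\ad}\vert = \#\orbitsymbol$ by relating all three quantities to the order of a permutation of the vertex set of the Affine Dynkin Diagram of $\roots$. Recall from the setup in \S\ref{Snotation} that the vertex set of the Affine Dynkin Diagram is exactly $\simpleaffinegradients = \simpleroots \cup \{-\highestroot\}$, that $\Omega_{\ad}$ acts on this diagram via $\Omega_{\ad} \directedisom X_{\ad}^{\vee}/Q^{\vee}$, and that the induced permutation of $\simpleaffinegradients$ by any $\omega \in \Omega_{\ad}$ coincides with the permutation induced by $\prW(\omega) \in \finiteweylgroup$. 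Correspondingly, $\Omega$ (resp.\ $\Omega_{\ad}$) permutes the set $\simpleaffineroots = \simpleroots \cup \{1 - \highestroot\}$ of simple affine \emph{roots} (not just gradients), and this action is compatible with the gradient action under $1 - \highestroot \mapsto -\highestroot$, $\alpha \mapsto \alpha$. So an $\omega$-orbit $\orbitsymbol$ of the affine root $1 - \highestroot$ maps bijectively onto an orbit of the vertex $-\highestroot$ in the diagram, and $\#\orbitsymbol$ equals the length of that vertex orbit.

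First I would handle $\vert\sigma\vert = \#\orbitsymbol$. Since the permutation of $\simpleaffinegradients$ induced by $\omega$ (equivalently by $\omega_{\ad}$) is realized by $\sigma = \prW(\omega)$ acting on $\simpleaffinegradients \subset V$, the order $\vert\sigma\vert$ is a multiple of the order of this permutation, which is the lcm of its cycle lengths; in particular $\#\orbitsymbol$ divides $\vert\sigma\vert$. For the reverse divisibility, the key point is that $\simpleaffinegradients$ spans $V$, so an element of $\finiteweylgroup$ that fixes $\simpleaffinegradients$ pointwise is the identity; hence $\sigma$ is determined by (and has the same order as) its induced permutation of $\simpleaffinegradients$. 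Thus $\vert\sigma\vert = \operatorname{lcm}$ of the cycle lengths of that permutation. To conclude $\vert\sigma\vert = \#\orbitsymbol$ I need that the cycle containing $-\highestroot$ is the unique cycle of maximal length, equivalently that every cycle length divides $\#\orbitsymbol$. This is where I would invoke the structure of $\Omega_{\ad} \cong X_{\ad}^{\vee}/Q^{\vee}$ acting on the Affine Dynkin Diagram: $\Omega_{\ad}$ is abelian, $\omega_{\ad}$ has order equal to its order in this cyclic-or-product-of-cyclic group, and the orbit of the special vertex (the affine node, which is $-\highestroot$, i.e.\ the vertex labelled $0$) is a \emph{free} orbit — the stabilizer in $\Omega_{\ad}$ of the affine node is trivial because $\Omega_{\ad}$ acts simply transitively on the set of special vertices and $-\highestroot$ is one of them. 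Hence $\#\orbitsymbol = \vert\omega_{\ad}\vert$, and every other orbit has size dividing $\vert\omega_{\ad}\vert$, giving $\operatorname{lcm}(\text{cycle lengths}) = \vert\omega_{\ad}\vert = \#\orbitsymbol$.

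That last computation simultaneously gives $\vert\omega_{\ad}\vert = \#\orbitsymbol$, so the remaining task is the relation between $\vert\sigma\vert$ and $\vert\omega_{\ad}\vert$ — but the argument above shows both equal $\operatorname{lcm}$ of the cycle lengths of the common induced permutation of $\simpleaffinegradients$, hence equal each other. I would phrase the write-up so that this is transparent: the permutation of $\simpleaffinegradients$ is a single gadget, $\sigma$ and $\omega_{\ad}$ both act faithfully through it (for $\omega_{\ad}$ one uses that $\Omega_{\ad}$ acts faithfully on the Affine Dynkin Diagram, which is part of item (XII) of the Plates in \cite{bourbakiII}), and the special vertex $-\highestroot$ has a free orbit under the abelian group generated by this permutation, so its orbit length is the full order.

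The main obstacle I anticipate is the freeness of the orbit of $-\highestroot$ — i.e.\ that the only $\omega_{\ad}$-power fixing the affine node is the identity. I would pin this down via the simply transitive action of $\Omega_{\ad} \cong X_{\ad}^{\vee}/Q^{\vee}$ on the set of special (minuscule) vertices of the Affine Dynkin Diagram, a standard fact for which $-\highestroot$ being a special vertex is exactly the statement that the affine node is special; alternatively one can argue directly that a nontrivial element of $\Omega_{\ad}$ cannot fix the affine node because it would then be an automorphism of the \emph{finite} Dynkin Diagram lying in $\finiteweylgroup$ and fixing a vertex, forcing it to be trivial by a short case-check (or by faithfulness of $\finiteweylgroup$ on $\simpleroots \cup \{-\highestroot\}$ together with the diagram automorphism classification). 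Everything else is bookkeeping with orbits and orders.
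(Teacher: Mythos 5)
Your proposal is correct, and the central insight---that the orbit of the affine node under $\langle \omega_{\ad}\rangle$ is a \emph{free} orbit---is indeed the heart of the matter. But the route you take to justify it, and the way you link $\vert\sigma\vert$ and $\vert\omega_{\ad}\vert$, both differ from the paper. For the first equality, the paper works directly with the semidirect product $W_{\ad} = X_{\ad}^{\vee} \rtimes \finiteweylgroup$: projecting shows $\omega_{\ad}^n = 1 \Rightarrow \sigma^n = 1$, and conversely $\sigma^n = 1$ forces $\omega_{\ad}^n$ to be a translation stabilizing $\alcove$, hence $0$. You instead observe that $\sigma$ and $\omega_{\ad}$ both act faithfully through the same permutation of $\simpleaffinegradients$, hence have equal order; this is clean, but the paper's version is a one-liner once you recall that a nonzero translation moves every alcove. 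For the second equality, the paper \emph{proves} freeness from scratch: $\omega^{N}$ fixing $1-\highestroot$ means it permutes $\simpleroots$, hence fixes the origin, hence $\omega_{\ad}^{N} \in \finiteweylgroup$, and an element of $\finiteweylgroup$ stabilizing $\simpleroots$ (equivalently, of length $0$) must be $1$. You instead invoke the fact that $\Omega_{\ad}$ acts simply transitively on the special vertices of the alcove; this is true and well known but considerably heavier than what the lemma needs, and it is precisely what the paper's short argument re-derives in the special case of the affine node. So each approach buys something: yours is modular and cites a portable structural fact; the paper's is self-contained and stays entirely within the elementary facts about length and alcoves already assembled in \S\ref{Snotation}.

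One concrete caveat about your ``fallback'' direct argument: as written it is muddled. If $\omega_{\ad}^{k}$ fixes the affine node, the fact that it then \emph{lies in} $\finiteweylgroup$ is not automatic from ``being a diagram automorphism''---you must first note that permuting $\simpleroots$ forces $\omega_{\ad}^{k}$ to fix the origin (that is the step the paper makes explicit). And once it is in $\finiteweylgroup$, the conclusion $\omega_{\ad}^{k}=1$ follows not from the classification of Dynkin diagram automorphisms or from ``fixing a vertex,'' but from the standard length argument: an element of $\finiteweylgroup$ that stabilizes the positive system has length zero. If you want a self-contained proof rather than the citation to simply-transitive action, you should reproduce those two steps exactly as the paper does.
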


\begin{proof}
It is obvious from the semidirect product that $ \omega_{\ad}^n = 1 $ implies $ \sigma^n = 1 $. Conversely, if $ \sigma^n = 1 $ then, by the semidirect product, $ \omega_{\ad}^n \in X_{\ad}^{\vee} $. But $ \omega_{\ad}^n $ stabilizes the alcove $ \alcove $ and no translation can do this except translation by $ 0 $, so $ \sigma^n = 1 $ implies $ \omega_{\ad}^n = 1 $. This establishes the first equality. If $ N := \# \orbitsymbol $ then $ \omega^N ( 1 - \highestroot ) = 1 - \highestroot $ and therefore $ \omega^N $ permutes $ \simpleroots $. This implies that $ \omega^N $ fixes the origin, so $ \omega_{\ad}^N \in \finiteweylgroup $. But only $ 1 \in \finiteweylgroup $ can stabilize $ \simpleroots $ so $ \omega_{\ad}^N = 1 $ and therefore $ N = \vert \omega_{\ad} \vert $.
\end{proof}

\begin{lemma} \label{Lmultiplicitiespreserved}
Enumerate $ \simpleroots = \{ \alpha_1, \ldots, \alpha_{\ell} \} $ and write $ \highestroot = m_1 \alpha_1 + \cdots + m_{\ell} \alpha_{\ell} $. Set $ \alpha_0 := - \highestroot $ and $ m_0 := 1 $ so that $ \sum_i m_i \alpha_i = 0 $. \textbf{Assertion:} If $ \omega \in \Omega $ and $ \sigma := \prW ( \omega ) $ then $ m_i = m_j $ whenever $ \sigma ( \alpha_i ) = \alpha_j $.
\end{lemma}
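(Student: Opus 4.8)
The plan is to reduce the statement to a one-line linear-algebra fact about the relation $\sum_{i=0}^{\ell} m_i \alpha_i = 0$ together with the diagram automorphism induced by $\omega$.

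First I would record that, since $\omega$ stabilizes $\alcove$, it permutes the affine simple roots $\simpleaffineroots$, so $\sigma = \prW ( \omega )$ permutes their gradients, i.e. the set $\simpleaffinegradients = \{ \alpha_0, \alpha_1, \ldots, \alpha_{\ell} \}$; this is exactly the $\Omega$-action on the vertices of the Affine Dynkin Diagram recalled at the end of \S\ref{Snotation}. Write $\sigma ( \alpha_i ) = \alpha_{\pi ( i )}$ for the resulting permutation $\pi$ of $\{ 0, 1, \ldots, \ell \}$. Applying the linear map $\sigma$ to the relation $\sum_{i=0}^{\ell} m_i \alpha_i = 0$ and reindexing via $j = \pi ( i )$ yields $\sum_{j=0}^{\ell} m_{\pi^{-1} ( j )} \alpha_j = 0$. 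Hence both the tuple $( m_0, \ldots, m_{\ell} )$ and its rearrangement $( m_{\pi^{-1} ( 0 )}, \ldots, m_{\pi^{-1} ( \ell )} )$ are coefficient vectors of linear relations among $\alpha_0, \ldots, \alpha_{\ell}$.

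Next I would invoke the fact that the space of such relations is one-dimensional: the $\alpha_i$ with $i \geq 1$ form a basis of $V$, so a relation is determined by its $\alpha_0$-coefficient, and a nonzero one exists because $\alpha_0 = - \highestroot = - \sum_{i \geq 1} m_i \alpha_i$. Therefore $( m_{\pi^{-1} ( j )} )_j = c \cdot ( m_j )_j$ for some scalar $c$; but the left-hand tuple is a mere rearrangement of the right-hand one, so the two have the same strictly positive sum, forcing $c = 1$. Thus $m_{\pi^{-1} ( j )} = m_j$ for all $j$, and specializing to $j$ with $\sigma ( \alpha_i ) = \alpha_j$, i.e. $j = \pi ( i )$, gives $m_i = m_j$ as claimed.

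I do not expect a genuine obstacle. The only point that needs care is the very first one, namely that $\sigma$ permutes all of $\simpleaffinegradients$ (including the added vertex $\alpha_0 = - \highestroot$), not merely $\simpleroots$ up to sign; this is precisely the standard description of the $\Omega$-action on the affine Dynkin diagram quoted in \S\ref{Snotation}, and everything else is formal. (Note that the running hypothesis $| w | \geq 3$ of this subsection is not needed for this lemma.)
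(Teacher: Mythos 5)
Your proof is correct, and it is genuinely different from the paper's. The paper first disposes of $\sigma = 1$, then uses Lemma~\ref{Lordersameorbitmaximal} to locate a specific index $j$ with $\sigma(\alpha_j) = -\highestroot$, isolates that term in the transformed relation, observes that the resulting expression $\highestroot = \sum_{i \neq j} (m_i/m_j)\sigma(\alpha_i)$ forces $m_i/m_j \in \N$, deduces $m_j = 1$ from $m_0 = 1$, and then compares coefficients term by term. You bypass all of this by observing that the space of linear relations among $\alpha_0, \ldots, \alpha_\ell$ is one-dimensional (since $\alpha_1, \ldots, \alpha_\ell$ is a basis of $V$), so the transformed coefficient tuple must be a scalar multiple of the original one, and the scalar must be $1$ since both tuples have the same positive sum. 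Your argument is shorter, treats all indices symmetrically (no need to single out $\alpha_0$), and removes the dependence on Lemma~\ref{Lordersameorbitmaximal}. What the paper's route buys is the explicit observation that $m_j = 1$ for the preimage of $-\highestroot$ (a small fact used implicitly elsewhere), but as you note this is recoverable a posteriori from the lemma itself since $m_0 = 1$. Your remark that the running hypothesis $|w| \geq 3$ is not needed here is also correct.
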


\begin{proof}
I may assume that $ \sigma \neq 1 $, in which case Lemma \ref{Lordersameorbitmaximal} implies that there is $ 1 \leq j \leq \ell $ such that $ \sigma ( \alpha_j ) = - \highestroot $. Applying $ \sigma $ to $ \sum_i m_i \alpha_i = 0 $, isolating $ m_j \sigma ( \alpha_j ) = - m_j \highestroot $, and dividing by $ m_j $ yields $ \highestroot = \sum_{i \neq j} \frac{m_i}{m_j} \sigma ( \alpha_i ) $. Since $ \simpleroots $ is linearly-independent and $ \sigma ( \alpha_i ) \in \simpleroots $ for all $ i \neq j $, $ m_i / m_j \in \N $ for all $ i $. Since $ m_0 = 1 $, it must then be true that $ m_j = 1 $. Thus, $ m_1 \alpha_1 + \cdots + m_{\ell} \alpha_{\ell} = \highestroot = \sum_{i \neq j} m_i \sigma ( \alpha_i ) $ and comparison of the coefficients finishes the proof.
\end{proof}

\begin{notation}
Let $ R, S \in \simpleroots $ be such that $ w ( S ) = R $ and $ w ( R ) = - \highestroot $.
\end{notation}

Such $ R, S $ exist by Lemma \ref{Lordersameorbitmaximal} since $ | w | \geq 3 $.

\begin{notation}
If $ b \in \roots $ and $ \alpha \in \simpleroots $ then denote by $ b_{\alpha} $ the $ \alpha $-coefficient of $ b $ when expressed using the basis $ \simpleroots $.
\end{notation}

\begin{corollary} \label{CcoeffsRSonly01}
$ b_R, b_S \in \{ -1, 0, 1 \} $ for all $ b \in \roots $.
\end{corollary}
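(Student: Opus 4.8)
The plan is to reduce the corollary to the single numerical fact that $R$ and $S$ occur with coefficient $1$ in the highest root $\highestroot$, and then to invoke the standard bound on the simple-root coefficients of an arbitrary root. There is no computation involved; the work is entirely in correctly triggering Lemma~\ref{Lmultiplicitiespreserved}.

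First I would note that, since $w \in \prW(\Omega)$ by the running hypothesis of \S\ref{SSformulasforalcovestabilizers}, we may write $w = \prW(\omega)$ for some $\omega \in \Omega$, and that $w$ permutes $\simpleaffinegradients = \simpleroots \cup \{ -\highestroot \}$ exactly as described by the compatibility between $\prW$ and $\Omega_{\ad}$ recalled in \S\ref{Snotation}. This is precisely the situation to which Lemma~\ref{Lmultiplicitiespreserved} applies, with $\sigma = w$ acting on $\{ \alpha_0, \alpha_1, \ldots, \alpha_{\ell} \}$, where $\alpha_0 = -\highestroot$ and $m_0 = 1$.

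Next I would apply Lemma~\ref{Lmultiplicitiespreserved} twice. Since $w(R) = -\highestroot = \alpha_0$, the lemma gives $m_R = m_0 = 1$; and since $w(S) = R$, it then gives $m_S = m_R = 1$. Thus both $R$ and $S$ appear with multiplicity one in $\highestroot$.

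Finally I would invoke the standard fact that, for an irreducible root system with highest root $\highestroot = \sum_i m_i \alpha_i$, every positive root $b = \sum_i b_i \alpha_i$ satisfies $0 \le b_i \le m_i$ (for instance because $\roots \cup \{ 0 \}$ is exactly the weight set of the adjoint representation and $\highestroot$ is its highest weight; see also \cite{bourbakiII}), hence every $b \in \roots$ satisfies $|b_i| \le m_i$ for all $i$. Specializing to the coefficients of $R$ and $S$ and using $m_R = m_S = 1$ yields $b_R, b_S \in \{ -1, 0, 1 \}$ for all $b \in \roots$. The only points needing care are the bookkeeping in the previous paragraph (namely that $w$ genuinely lies in $\prW(\Omega)$, so that Lemma~\ref{Lmultiplicitiespreserved} is available) and the citation of the coefficient bound $|b_i| \le m_i$; neither is a real obstacle.
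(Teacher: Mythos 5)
Your proposal is correct and follows essentially the same route as the paper: reduce to the case $b = \highestroot$ via the standard coefficient bound for roots, then apply Lemma~\ref{Lmultiplicitiespreserved} to conclude $m_R = m_S = m_0 = 1$. The paper's proof is terser but identical in substance; your version merely spells out the two applications of the lemma and cites a justification for the coefficient bound that the paper leaves implicit.
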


\begin{proof}
By definition of ``highest'', it suffices to prove the claim when $ b = \highestroot $, but this is immediate from Lemma \ref{Lmultiplicitiespreserved}.
\end{proof}

\begin{examplenum} \label{EXfirstD5example}
Let $ G $ be the adjoint group D5, so $ \Omega = \Omega_{\ad} \cong \Z / 4 \Z $. Following Plate IV \cite{bourbakiII}, enumerate $ \simpleroots = \{ \alpha_1, \alpha_2, \alpha_3, \alpha_4, \alpha_5 \} $ and depict any $ m_1 \alpha_1 + m_2 \alpha_2 + m_3 \alpha_3 + m_4 \alpha_4 + m_5 \alpha_5 \in \roots $ in ``Dynkin format'' by $ \dynkinDfive{m_1}{m_2}{m_3}{m_4}{m_5} $. From Plate IV (XII) \cite{bourbakiII}, there is a generator $ w \in \prW ( \Omega ) $ whose permutation of $ \simpleaffinegradients $ has the disjoint cycle decomposition $ ( \alpha_1, \alpha_4, - \highestroot, \alpha_5 ) ( \alpha_2, \alpha_3 ) $. Thus, $ S = \dynkinDfive{1}{0}{0}{0}{0} $ and $ R = \dynkinDfive{0}{0}{0}{1}{0} $.
\end{examplenum}

It is important to observe that the existence of $ w \in \prW ( \Omega ) $ such that $ | w | \geq 3 $ forces $ \roots $ to be \emph{simply-laced}. The various equivalent expressions of what it means to be simply-laced are completely standard, but there does not seem to be a proof in print so I record one here:

\begin{lemma}[Simply-Laced] \label{Lsimplylaced}
Recall that $ \roots $ is irreducible and reduced. The following three properties are equivalent:
\begin{enumerate}
\item The Dynkin Diagram of $ \roots $ has no multiple bonds.\label{Lsimplylaced_nomultbonds}

\item $ \langle a, b^{\vee} \rangle \in \{ -1, 0, 1 \} $ for all $ a, b \in \roots $ such that $ a \notin \R \cdot b $.\label{Lsimplylaced_nobigpairings}

\item The Cartan Matrix of $ \roots $ is symmetric.\label{Lsimplylaced_cartansymmetric}

\item $ || a || = || b || $ for all $ a, b \in \roots $.\label{Lsimplylaced_samelength}
\end{enumerate}

$ \roots $ is called ``simply-laced'' iff it has any/all of these properties.
\end{lemma}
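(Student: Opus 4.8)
The plan is to prove the cyclic chain of implications $(\ref{Lsimplylaced_nomultbonds}) \Rightarrow (\ref{Lsimplylaced_nobigpairings}) \Rightarrow (\ref{Lsimplylaced_cartansymmetric}) \Rightarrow (\ref{Lsimplylaced_samelength}) \Rightarrow (\ref{Lsimplylaced_nomultbonds})$, leaning throughout on the standard facts about root strings recalled in \S\ref{Snotation} and on irreducibility. First, for $(\ref{Lsimplylaced_nomultbonds}) \Rightarrow (\ref{Lsimplylaced_nobigpairings})$: recall that for $a, b \in \roots$ with $a \notin \R \cdot b$ one has $0 \le \langle a, b^{\vee} \rangle \langle b, a^{\vee} \rangle \le 3$, and that the entries $\langle a, b^{\vee} \rangle$ for $a, b$ simple are exactly the off-diagonal Cartan integers read off the Dynkin diagram; no multiple bonds means every such product is $0$ or $1$, hence each factor lies in $\{-1,0,1\}$ for simple $a,b$. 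To upgrade this from simple roots to all of $\roots$, I would use that any root is $\finiteweylgroup$-conjugate to a simple root together with $\finiteweylgroup$-invariance of the pairing — actually it is cleaner to argue via root strings: if $|\langle a, b^{\vee}\rangle| \ge 2$ for some non-proportional $a, b$, then the $a$-string through $b$ has length $\ge 3$, and by transitivity of $\finiteweylgroup$ on roots of a fixed length (within an irreducible system there are at most two lengths) one produces a length-$\ge 3$ string between two simple roots spanning a rank-$2$ subsystem, forcing a multiple bond.

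Next, $(\ref{Lsimplylaced_nobigpairings}) \Rightarrow (\ref{Lsimplylaced_cartansymmetric})$: the Cartan matrix has entries $\langle \alpha_i, \alpha_j^{\vee}\rangle$; distinct simple roots are never proportional, so by $(\ref{Lsimplylaced_nobigpairings})$ each off-diagonal entry is in $\{-1,0\}$, and symmetry follows because $\langle \alpha_i, \alpha_j^{\vee}\rangle = 0 \iff \langle \alpha_j, \alpha_i^{\vee}\rangle = 0$ (both detect whether $\alpha_i - \alpha_j$, equivalently $\alpha_i + \alpha_j$, is a root), so a zero in one slot forces a zero in the transpose slot, and a $-1$ forces a $-1$ by the same dichotomy. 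Then $(\ref{Lsimplylaced_cartansymmetric}) \Rightarrow (\ref{Lsimplylaced_samelength})$: write $\langle \alpha_i, \alpha_j^{\vee}\rangle = 2(\alpha_i, \alpha_j)/(\alpha_j, \alpha_j)$ for a $\finiteweylgroup$-invariant inner product $(\cdot,\cdot)$; symmetry of the Cartan matrix gives $(\alpha_j,\alpha_j) = (\alpha_i,\alpha_i)$ whenever $(\alpha_i,\alpha_j) \ne 0$, i.e. whenever $\alpha_i, \alpha_j$ are adjacent in the Dynkin diagram; since the diagram is connected (irreducibility), all simple roots have equal length, and since every root is $\finiteweylgroup$-conjugate to a simple one, all roots have equal length.

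Finally, $(\ref{Lsimplylaced_samelength}) \Rightarrow (\ref{Lsimplylaced_nomultbonds})$: if all roots have equal length, then $\langle \alpha_i, \alpha_j^{\vee}\rangle = 2(\alpha_i,\alpha_j)/(\alpha_j,\alpha_j) = 2(\alpha_j,\alpha_i)/(\alpha_i,\alpha_i) = \langle \alpha_j, \alpha_i^{\vee}\rangle$, so the product $\langle \alpha_i, \alpha_j^{\vee}\rangle\langle \alpha_j, \alpha_i^{\vee}\rangle$ is a perfect square in $\{0,1,2,3\}$, hence $0$ or $1$; by the standard correspondence between this product and the number of bonds (Ch VI \S1 no. 3 and the Plates of \cite{bourbakiII}), there are no multiple bonds. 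I expect the only real subtlety — the main obstacle — to be the bookkeeping in the step $(\ref{Lsimplylaced_nomultbonds}) \Rightarrow (\ref{Lsimplylaced_nobigpairings})$, namely passing from a diagram-level hypothesis about \emph{simple} roots to a statement about \emph{arbitrary} pairs of roots; the clean way around it is to invoke that $\finiteweylgroup$ acts transitively on roots of each given length in an irreducible system, reducing any non-proportional pair to a pair inside a rank-$2$ parabolic subsystem where the classification of rank-$2$ root systems ($A_1\times A_1$, $A_2$, $B_2$, $G_2$) makes the equivalence with the bond count immediate.
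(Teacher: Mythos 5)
Your steps $(\ref{Lsimplylaced_nobigpairings}) \Rightarrow (\ref{Lsimplylaced_cartansymmetric})$, $(\ref{Lsimplylaced_cartansymmetric}) \Rightarrow (\ref{Lsimplylaced_samelength})$, and $(\ref{Lsimplylaced_samelength}) \Rightarrow (\ref{Lsimplylaced_nomultbonds})$ are all correct, and the idea of arranging these four properties into a single cycle is tidier than the paper's web of implications (the paper proves $(\ref{Lsimplylaced_samelength}) \Rightarrow (\ref{Lsimplylaced_cartansymmetric}), (\ref{Lsimplylaced_nobigpairings})$; $(\ref{Lsimplylaced_nobigpairings}) \Rightarrow (\ref{Lsimplylaced_nomultbonds})$; $(\ref{Lsimplylaced_nomultbonds}) \Rightarrow (\ref{Lsimplylaced_samelength})$; $(\ref{Lsimplylaced_cartansymmetric}) \Rightarrow (\ref{Lsimplylaced_nomultbonds})$). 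However, the implication $(\ref{Lsimplylaced_nomultbonds}) \Rightarrow (\ref{Lsimplylaced_nobigpairings})$ — which you yourself flag as the main obstacle — has a genuine gap, and neither of your proposed routes around it works.

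The issue is that you need to control the pairing $\langle a, b^{\vee} \rangle$ for an \emph{arbitrary} non-proportional pair of roots, and both of your tools operate on single roots, not pairs. Transitivity of $\finiteweylgroup$ on roots of a given length lets you move $b$ to a simple root $\beta$, but then $a$ is carried to some $w(a)$ which has no reason to be simple, and you are back where you started. The alternative, ``reduce to a rank-$2$ parabolic subsystem,'' is not available in general: it is simply false that every non-proportional pair $\{a,b\}$ is $\finiteweylgroup$-conjugate to a pair lying in a subsystem generated by two simple roots. Already in $D_4$ (with $\alpha_i = e_i - e_{i+1}$ for $i \le 3$ and $\alpha_4 = e_3 + e_4$), the orthogonal pair $\{e_1 - e_2, e_1 + e_2\}$ is not $\finiteweylgroup$-conjugate to any pair of orthogonal simple roots, since every element of $W(D_4)$ that preserves the line through $e_1 - e_2$ also preserves the line through $e_1 + e_2$. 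So one cannot ``push the string into the Dynkin diagram.''

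The correct repair is to route through $(\ref{Lsimplylaced_samelength})$: first prove $(\ref{Lsimplylaced_nomultbonds}) \Rightarrow (\ref{Lsimplylaced_samelength})$ directly — ``no multiple bond'' forces $\langle \alpha, \beta^{\vee}\rangle = \langle \beta, \alpha^{\vee}\rangle = \pm 1$ for adjacent simple roots, hence $\|\alpha\| = \|\beta\|$; connectedness of the Dynkin diagram (irreducibility) propagates this to all simple roots, and $\finiteweylgroup$-conjugacy (Proposition 15, Ch.\ VI \S1 no.\ 5 of \cite{bourbakiII}) extends it to all of $\roots$. This is exactly the argument you already gave for $(\ref{Lsimplylaced_cartansymmetric}) \Rightarrow (\ref{Lsimplylaced_samelength})$, with the adjacency input supplied by $(\ref{Lsimplylaced_nomultbonds})$ instead of $(\ref{Lsimplylaced_cartansymmetric})$. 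Then $(\ref{Lsimplylaced_samelength}) \Rightarrow (\ref{Lsimplylaced_nobigpairings})$ is immediate from the table in Ch.\ VI \S1 no.\ 3 of \cite{bourbakiII} (or from $\langle a,b^{\vee}\rangle = \langle b,a^{\vee}\rangle$ and $\langle a,b^{\vee}\rangle\langle b,a^{\vee}\rangle \le 3$ for non-proportional roots). In other words, the one implication you can only reach via length, $(\ref{Lsimplylaced_nomultbonds}) \Rightarrow (\ref{Lsimplylaced_nobigpairings})$, has to pass through $(\ref{Lsimplylaced_samelength})$ — which is what the paper does.
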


Here $ || \emptyargument || $ denotes the norm defined by some fixed $ \finiteweylgroup $-invariant inner product $ ( \emptyargument | \emptyargument ) $ on $ V $, which necessarily satisfies $ \langle a, b^{\vee} \rangle = 2 ( a | b ) / ( b | b ) $ for all $ a, b \in \roots $.

\begin{proof}
\textbf{(\ref{Lsimplylaced_samelength}) $ \boldsymbol{\Rightarrow} $ (\ref{Lsimplylaced_cartansymmetric}), (\ref{Lsimplylaced_nobigpairings}):} Since $ \langle a, b^{\vee} \rangle = 2 ( a | b ) / ( b | b ) $ and $ \langle b, a^{\vee} \rangle = 2 ( b | a ) / ( a | a ) $, combining symmetry $ ( a | b ) = ( b | a ) $ with (\ref{Lsimplylaced_samelength}) proves that $ \langle a, b^{\vee} \rangle = \langle b, a^{\vee} \rangle $ for all $ a, b \in \roots $, which implies (\ref{Lsimplylaced_cartansymmetric}). If, further, $ a \notin \R \cdot b $ then the table in Ch VI \S1 no. 3 \cite{bourbakiII} shows that $ -1, 0, 1 $ are the only possible values for $ \langle a, b^{\vee} \rangle $. \textbf{(\ref{Lsimplylaced_nobigpairings}) $ \boldsymbol{\Rightarrow} $ (\ref{Lsimplylaced_nomultbonds}):} This is trivial, since the number of bonds between vertices $ a, b \in \simpleroots $ is $ \langle a, b^{\vee} \rangle \cdot \langle b, a^{\vee} \rangle $. \textbf{(\ref{Lsimplylaced_nomultbonds}) $ \boldsymbol{\Rightarrow} $ (\ref{Lsimplylaced_samelength}):} Fix $ \alpha, \beta \in \simpleroots $. Suppose first that $ \langle \alpha, \beta^{\vee} \rangle \neq 0 $, i.e. that $ \alpha, \beta $ are adjacent vertices in the Dynkin Diagram. By (\ref{Lsimplylaced_nomultbonds}), $ \langle \alpha, \beta^{\vee} \rangle \cdot \langle \beta, \alpha^{\vee} \rangle = 1 $, and so $ \langle \alpha, \beta^{\vee} \rangle = \langle \beta, \alpha^{\vee} \rangle = \pm 1 $. Symmetry $ ( a | b ) = ( b | a ) $ and the identities $ \langle a, b^{\vee} \rangle = 2 ( a | b ) / ( b | b ) $ and $ \langle b, a^{\vee} \rangle = 2 ( b | a ) / ( a | a ) $ imply $ || \alpha || = || \beta || $. By traversing the (connected) Dynkin Diagram, $ || \alpha || $ is the same for all $ \alpha \in \simpleroots $. Call this common value $ L $. Now suppose $ a \in \roots $. By Proposition 15 Ch VI \S1 no. 5 \cite{bourbakiII}, there is $ w \in \finiteweylgroup $ such that $ w ( a ) \in \simpleroots $. Since the norm is $ \finiteweylgroup $-invariant, $ || a || = || w ( a ) || = L $. \textbf{(\ref{Lsimplylaced_cartansymmetric}) $ \boldsymbol{\Rightarrow} $ (\ref{Lsimplylaced_nomultbonds}):} By (\ref{Lsimplylaced_cartansymmetric}), $ \langle a, b^{\vee} \rangle = \langle b, a^{\vee} \rangle $ for all $ a, b \in \simpleroots $. If, further, $ a \neq b $ then $ a \notin \R \cdot b $ and so the table in Ch VI \S1 no. 3 \cite{bourbakiII} shows that $ -1, 0, 1 $ are the only possible values for $ \langle a, b^{\vee} \rangle $.
\end{proof}

\begin{lemma} \label{LcoeffRSdescription}
If $ \beta > 0 $ then $ w ( \beta ) < 0 $ if and only if $ \beta_R = 1 $. In fact,
\begin{enumerate}
\item $ \{ \beta > 0 \suchthat w ( \beta ) > 0, w^2 ( \beta ) > 0 \} = \{ \beta > 0 \suchthat \beta_R = 0, \beta_S = 0 \} $ \label{Esubset00}

\item $ \{ \beta > 0 \suchthat w ( \beta ) > 0, w^2 ( \beta ) < 0 \} = \{ \beta > 0 \suchthat \beta_R = 0, \beta_S = 1 \} $ \label{Esubset01}

\item $ \{ \beta > 0 \suchthat w ( \beta ) < 0, w^2 ( \beta ) > 0 \} = \{ \beta > 0 \suchthat \beta_R = 1, \beta_S = 0 \} $ \label{Esubset10}

\item $ \{ \beta > 0 \suchthat w ( \beta ) < 0, w^2 ( \beta ) < 0 \} = \{ \beta > 0 \suchthat \beta_R = 1, \beta_S = 1 \} $ \label{Esubset11}
\end{enumerate}
\end{lemma}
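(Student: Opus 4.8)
The plan is to translate both hypotheses --- $w \in \prW(\Omega)$ and $|w| \geq 3$ --- into explicit information about $w$ viewed as a permutation of $\simpleaffinegradients = \simpleroots \cup \{-\highestroot\}$, and then read off all the claims by expanding in the simple basis. Concretely: $\prW(\Omega)$ permutes $\simpleaffinegradients$; by the choice of $R,S$ one has $w(R) = -\highestroot$ and $w(S) = R$; and by Lemma \ref{Lordersameorbitmaximal} the $w$-orbit of $-\highestroot$ has size $|w| \geq 3 > 1$, so $w$ does not fix $-\highestroot$ and hence $R' := w(-\highestroot)$ is a genuine element of $\simpleroots$, necessarily distinct from $R$. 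Therefore $w$ restricts to a bijection of $\simpleroots \setminus \{R\}$ onto $\simpleroots \setminus \{R'\}$, and $R$ is the only simple root that $w$ sends to a negative root.

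First I would prove the opening assertion. Fix $\beta > 0$ and write $\beta = \sum_{\alpha \in \simpleroots} \beta_\alpha \alpha$ with all $\beta_\alpha \geq 0$ and, by Corollary \ref{CcoeffsRSonly01} together with positivity, $\beta_R \in \{0,1\}$. Then
\[
w(\beta) \;=\; -\beta_R\,\highestroot \;+\; \sum_{\alpha \in \simpleroots \setminus \{R\}} \beta_\alpha\, w(\alpha),
\]
and the final sum is a non-negative combination of the simple roots in $\simpleroots \setminus \{R'\}$. If $\beta_R = 0$ this sum equals $w(\beta)$ itself, a non-zero non-negative combination of simple roots, hence a positive root. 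If $\beta_R = 1$, the coefficient of $R'$ in $w(\beta)$ is $-m_{R'} < 0$ (since $m_{R'} \geq 1$), and a root with a negative simple coefficient is a negative root. Thus $w(\beta) < 0 \iff \beta_R = 1$.

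For the four-way refinement I would extract one further coefficient from the displayed formula: the only $\alpha \in \simpleroots \setminus \{R\}$ with $w(\alpha) = R$ is $\alpha = S$, and $m_R = 1$ by Lemma \ref{Lmultiplicitiespreserved} (because $w(R) = -\highestroot$, whose ``multiplicity'' is $1$), so the $R$-coefficient of $w(\beta)$ equals $\beta_S - \beta_R$. Now apply the opening assertion to $w^2(\beta) = w(w(\beta))$, splitting on the two cases already settled. If $\beta_R = 0$ then $w(\beta) > 0$ with $R$-coefficient $\beta_S$, so $w^2(\beta) < 0 \iff \beta_S = 1$, which gives (1) and (2). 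If $\beta_R = 1$ then $w(\beta) < 0$, so I apply the assertion to $-w(\beta) > 0$, whose $R$-coefficient is $1 - \beta_S$, obtaining $w^2(\beta) < 0 \iff (-w(\beta))_R = 0 \iff \beta_S = 1$, which gives (3) and (4). Since every $\beta > 0$ lies in exactly one of the four classes indexed by $(\beta_R,\beta_S) \in \{0,1\}^2$, and the four sign patterns of $(w(\beta), w^2(\beta))$ are mutually exclusive and exhaustive, the four set equalities follow.

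The step I expect to be the main obstacle is the first paragraph: pinning down the exact permutation behavior of $w$ on $\simpleaffinegradients$, and in particular confirming that $R' = w(-\highestroot)$ is a genuine simple root different from $R$ --- this is precisely where $|w| \geq 3$ enters and gives the ``$w(\beta)$ has vanishing $R'$-coefficient'' step its force. Once that combinatorial picture is in place, what remains is the short basis computation above, using Lemma \ref{Lmultiplicitiespreserved} only to record that $m_R = 1$.
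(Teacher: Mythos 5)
Your proof is correct and follows the same approach as the paper: expand $w(\beta)$ in the simple basis using the permutation of $\simpleaffinegradients$, and read off signs from the $R$- and $R'$-coefficients. You actually fill in the step the paper leaves implicit (``clear from the definition of highest''): the observation that $R' = w(-\highestroot)$ is a simple root distinct from $R$, which forces the $R'$-coefficient of $w(\beta)$ to equal $-\beta_R m_{R'}$, is a clean and explicit justification of the negativity assertion.
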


\begin{proof}
If $ \beta > 0 $ then Corollary \ref{CcoeffsRSonly01} says that $ \beta_R \in \{ 0, 1 \} $. Since $ w ( \simpleaffinegradients \setminus \{ R \} ) \subset \simpleroots $, it is obvious that if $ \beta_R = 0 $ then $ w ( \beta ) > 0 $ still. Conversely, if $ \beta_R = 1 $ then it is clear from the definition of ``highest'' and the fact that $ w ( R ) = - \highestroot $ that $ w ( \beta ) < 0 $. This proves the first claim. It is also clear from this that the validity of (\ref{Esubset00}) and (\ref{Esubset01}) will follow if it is proved, when $ \beta_R = 0 $, that $ \beta_S = 1 $ if and only if $ w^2 ( \beta ) < 0 $. But this is clear from the first claim again, since if $ \beta_R = 0 $ then $ \beta_S = 1 $ if and only if $ w ( \beta )_R = 1 $. The validity of (\ref{Esubset10}) and (\ref{Esubset11}) will follow if it is proved, when $ \beta_R = 1 $, that $ \beta_S = 1 $ if and only if $ w^2 ( \beta ) < 0 $. First, notice that when $ \beta_R = 1 $, due to cancellation, $ w ( \beta )_R = -1 $ if $ \beta_S = 0 $ and $ w ( \beta )_R = 0 $ if $ \beta_S = 1 $. By the same reasoning as in the beginning of the proof, if $ b < 0 $ then $ w ( b ) > 0 $ if and only if $ b_R = -1 $. Thus, if $ \beta_R = 1 $ then $ w ( w ( \beta ) ) > 0 $ if and only if $ \beta_S = 0 $.
\end{proof}

\begin{notation}
\begin{align*}
\flippingset_{0,0} \defeq \{ \beta > 0 \suchthat \beta_R = 0, \beta_S = 0 \} \\
\flippingset_{0,1} \defeq \{ \beta > 0 \suchthat \beta_R = 0, \beta_S = 1 \} \\
\flippingset_{1,0} \defeq \{ \beta > 0 \suchthat \beta_R = 1, \beta_S = 0 \} \\
\flippingset_{1,1} \defeq \{ \beta > 0 \suchthat \beta_R = 1, \beta_S = 1 \}
\end{align*}
\end{notation}

Note that, by Lemma \ref{LcoeffRSdescription},
\begin{itemize}
\item $ \{ \flippingset_{0,0}, \flippingset_{0,1}, \flippingset_{1,0}, \flippingset_{1,1} \} $ is a partition of $ \positiveroots $,
\item $ \lengthset ( w ) = \flippingset_{1,0} \sqcup \flippingset_{1,1} $, and
\item $ \flippingset ( w ) = \flippingset_{1,0} $
\end{itemize}

It is also important to observe that each of these sets has an extremal element: 
\begin{itemize}
\item $ S $ is the unique minimal element in $ \flippingset_{0,1} $,

\item $ R $ is the unique minimal element in $ \flippingset_{1,0} $, and 

\item $ \highestroot $ is the unique maximal element in $ \flippingset_{1,1} $.
\end{itemize}

\begin{example}
If $ G $ is the adjoint group D5 and $ w $ is as in Example \ref{EXfirstD5example} then
\begin{align*}
\flippingset_{0,0} &= \left\{ \dynkinDfive{0}{1}{0}{0}{0}, \dynkinDfive{0}{0}{1}{0}{0}, \dynkinDfive{0}{0}{0}{0}{1}, \dynkinDfive{0}{1}{1}{0}{0}, \dynkinDfive{0}{0}{1}{0}{1}, \dynkinDfive{0}{1}{1}{0}{1} \right\} \\
\flippingset_{0,1} &= \left\{ \dynkinDfive{1}{0}{0}{0}{0}, \dynkinDfive{1}{1}{0}{0}{0}, \dynkinDfive{1}{1}{1}{0}{0}, \dynkinDfive{1}{1}{1}{0}{1} \right\} \\
\flippingset_{1,0} &= \left\{ \dynkinDfive{0}{0}{0}{1}{0}, \dynkinDfive{0}{0}{1}{1}{0}, \dynkinDfive{0}{1}{1}{1}{0}, \dynkinDfive{0}{0}{1}{1}{1}, \dynkinDfive{0}{1}{1}{1}{1}, \dynkinDfive{0}{1}{2}{1}{1} \right\} \\
\flippingset_{1,1} &= \left\{ \dynkinDfive{1}{1}{1}{1}{0}, \dynkinDfive{1}{1}{1}{1}{1}, \dynkinDfive{1}{1}{2}{1}{1}, \dynkinDfive{1}{2}{2}{1}{1} \right\}
\end{align*}
\end{example}

\begin{defn}
Define 
\begin{equation*}
\Sigma_{R,S} \subset \flippingset_{0,1} \times \flippingset_{1,0} \times \flippingset_{1,1} 
\end{equation*}
to be the subset of all $ ( \alpha, \beta, \gamma ) $ such that $ \alpha + \beta = \gamma $. Define $ \pr_{0,1}, \pr_{1,0}, \pr_{1,1} $ to be the projections 
\begin{align*}
\pr_{0,1} : \Sigma_{R,S} \longrightarrow \flippingset_{0,1} \\
\pr_{1,0} : \Sigma_{R,S} \longrightarrow \flippingset_{1,0} \\
\pr_{1,1} : \Sigma_{R,S} \longrightarrow \flippingset_{1,1}
\end{align*}
\end{defn}

\begin{prop}[Constant Fibers] \label{Pconstantfibers}
All fibers in $ \Sigma_{R,S} $ of $ \pr_{0,1} $ are the same size, all fibers in $ \Sigma_{R,S} $ of $ \pr_{1,0} $ are the same size, and all fibers in $ \Sigma_{R,S} $ of $ \pr_{1,1} $ are the same size.
\end{prop}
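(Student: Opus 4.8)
The plan is to exploit the extremal elements recorded just before the statement: $S$ is the unique minimal element of $\flippingset_{0,1}$, $R$ is the unique minimal element of $\flippingset_{1,0}$, and $\highestroot$ is the unique maximal element of $\flippingset_{1,1}$. The key geometric input is the ``$R$-string'' and ``$S$-string'' structure from the Proposition 9 of Ch VI \S1 no. 3 of \cite{bourbakiII} recalled in \S\ref{Snotation}, combined with Corollary \ref{CcoeffsRSonly01}, which says $b_R,b_S\in\{-1,0,1\}$ for every $b\in\roots$. First I would show that for any $\gamma\in\flippingset_{1,1}$ the fiber $\pr_{1,1}^{-1}(\gamma)$ is in natural bijection with the set $\{\,i\in\Z \mid \gamma - iR \in\roots\ \text{and}\ \gamma - iR\ \text{has}\ S\text{-coefficient}\ 1\,\}$, or something equivalent phrased via strings: since $\roots$ is simply-laced (Lemma \ref{Lsimplylaced}), the $R$-string through $\gamma$ has length at most $2$, and because $\gamma_R=1$ it runs from $\gamma$ down to $\gamma-R$ (no further, by Corollary \ref{CcoeffsRSonly01}), so there is exactly one candidate $\alpha := \gamma - R$ with $\alpha_R=0$; one then checks $\alpha_S=1$ automatically because $\gamma_S=1$ and $R_S=0$. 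Hence every fiber of $\pr_{1,1}$ has size exactly $1$ — but wait, that cannot be right in general, so the honest version is that the relevant string may have $R$ appearing with multiplicity, and the fiber counts the number of ways to split $\gamma=\alpha+\beta$ with the prescribed coefficient patterns; the simply-laced hypothesis bounds each such count and the real work is to see it is \emph{independent of $\gamma$}.

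The mechanism I expect to make independence work is a transitivity/homogeneity argument: the group generated by reflections in roots orthogonal to both $R$ and $S$ (equivalently, the stabilizer-type subgroup of $\finiteweylgroup$ fixing the pair of coefficient functionals $b\mapsto b_R$, $b\mapsto b_S$) acts on each of $\flippingset_{0,1}$, $\flippingset_{1,0}$, $\flippingset_{1,1}$ and on $\Sigma_{R,S}$ compatibly with all three projections, and I would show this action is transitive on $\flippingset_{1,1}$ (and on the other two pieces). Transitivity of a group action commuting with a projection immediately forces all fibers to have equal cardinality, which is exactly the assertion. Concretely: given $\gamma,\gamma'\in\flippingset_{1,1}$, both have $\highestroot$ as the maximal element above them in the appropriate sense, and one produces $w'\in\finiteweylgroup$ carrying $\gamma$ to $\gamma'$ while preserving $R$-- and $S$-- coefficients by walking along the Dynkin diagram with simple reflections $s_\alpha$ for $\alpha\in\simpleroots\setminus\{R,S\}$; each such $s_\alpha$ preserves both coefficient functionals since $\langle R,\alpha^\vee\rangle$ and $\langle S,\alpha^\vee\rangle$ control only the $R,S$ entries, and by Corollary \ref{CcoeffsRSonly01} these stay in $\{-1,0,1\}$, in particular the $R,S$-coefficients of $\beta$ are unchanged when they are $0$ (resp. when the reflected simple root is neither $R$ nor $S$). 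Since $\alpha+\beta=\gamma$ is preserved under applying any such $w'$, the bijection $\Sigma_{R,S}\to\Sigma_{R,S}$ it induces intertwines $\pr_{0,1},\pr_{1,0},\pr_{1,1}$ with the corresponding maps on $\flippingset_{0,1},\flippingset_{1,0},\flippingset_{1,1}$.

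The main obstacle, I expect, is establishing the transitivity (equivalently, the homogeneity of the fiber-counting function) cleanly rather than by a case check over the simply-laced types $A,D,E$. The danger is that the naive ``walk the diagram'' argument does not literally preserve positivity or the coefficient pattern at intermediate steps, so one may instead need to argue directly: show that the fiber size of $\pr_{1,1}$ over $\gamma$ equals $\#\{\beta\in\lengthset(w) : \gamma-\beta\in\positiveroots,\ (\gamma-\beta)_R=0\}$, reinterpret this using the $w$-action (recall $w\in\prW(\Omega)$ has the very rigid ``rotation'' behavior from Lemmas \ref{Lordersameorbitmaximal}--\ref{LcoeffRSdescription}), and deduce constancy from the fact that $w$ itself permutes these data. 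An alternative, and perhaps the cleanest, is to observe that $\pr_{1,0}$ is in fact a \emph{bijection} — every $\beta\in\flippingset_{1,0}$ determines a unique $\gamma=w^{-1}(\text{something})$ or directly a unique $\gamma\in\flippingset_{1,1}$ and a unique $\alpha\in\flippingset_{0,1}$ via the string structure — in which case the ``constant fibers'' claim for $\pr_{1,0}$ is trivial and the claims for $\pr_{0,1}$ and $\pr_{1,1}$ reduce to a counting identity $\#\Sigma_{R,S} = \#\flippingset_{1,0}$ together with the transitivity argument above; I would check the small cases $A_n$, $D_n$, $E_6$ (where $|w|\ge 3$ can occur) against Example \ref{EXfirstD5example} to make sure the bijection claim is correct before committing to it.
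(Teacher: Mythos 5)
Your proposal takes a genuinely different route from the paper's proof, and while the underlying idea is interesting, as written it has concrete gaps.

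The paper proves constancy of the fibers by a local-to-global argument: Lemma \ref{Linduction} uses root-string facts (the recalled $a$-string result from Bourbaki, together with the simply-laced constraint $\langle\emptyargument,\emptyargument\rangle\in\{-1,0,1\}$) to construct, for any two elements $\gamma,\gamma'\in\flippingset_{1,1}$ differing by a root, mutually inverse bijections between the corresponding fibers of $\pr_{1,1}$ (Proposition \ref{Pneighboringfiberssamesize}); then Corollary \ref{Cfibersinbijection} inducts on height to connect every $\gamma$ to the extremal element $\highestroot$, and the analogous arguments for $\pr_{0,1}$ and $\pr_{1,0}$ use the extremal elements $S$ and $R$. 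Your proposal instead reaches for a \emph{global} symmetry: the action of the parabolic subgroup $W_0\subset\finiteweylgroup$ generated by $s_\alpha$ for $\alpha\in\simpleroots\setminus\{R,S\}$. You are right that $W_0$ preserves each $\flippingset_{i,j}$ ($(i,j)\neq(0,0)$) and acts on $\Sigma_{R,S}$ compatibly with all three projections. Transitivity of such an action would indeed immediately give constancy of the fibers, and this is a cleaner-sounding mechanism than the paper's induction. But you never establish transitivity, and you acknowledge this as the "main obstacle." Transitivity is genuinely non-trivial here; the paper's neighbor-by-neighbor induction is precisely what replaces it with something checkable.

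There are also two concrete errors in the proposal that should be flagged. First, the opening calculation suggests each fiber of $\pr_{1,1}$ might have size $1$ by an $R$-string argument; you catch that this is wrong, but the diagnosis ("$R$ appearing with multiplicity") is not quite right either — by Corollary \ref{CcoeffsRSonly01} the $R$-coefficient of any root is $0$ or $\pm 1$, so $R$ never appears with multiplicity. The fiber is large because there are many $\alpha\in\flippingset_{0,1}$ with $\gamma-\alpha\in\flippingset_{1,0}$, not because of repeated $R$'s in a string. Second, and more seriously, the fallback claim that "$\pr_{1,0}$ is in fact a bijection" is \emph{false}: in the paper's D5 table following Proposition \ref{Pconstantfibers}, one reads off $\#\Sigma_{R,S}=12$ while $\#\flippingset_{1,0}=6$, so $\pr_{1,0}$ has fibers of size $2$. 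The Example you say you would check would have shown you this directly. So the fallback does not salvage the argument, and the transitivity claim would have to be proved in full for the proposal to go through.
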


\begin{proof}
The proof of this will be completed in \S\ref{SSproofofconstantfibers}, after some corollaries are derived.
\end{proof}

\begin{example}
Let $ G $ be the adjoint group D5 and let $ w, R, S $ be as in Example \ref{EXfirstD5example}. The following table expresses $ \Sigma_{R,S} $:
\begin{center}
\begin{tabular}{c|cccc}
$ + $ & $ \dynkinDfive{1}{0}{0}{0}{0} $ & $\dynkinDfive{1}{1}{0}{0}{0} $ & $ \dynkinDfive{1}{1}{1}{0}{0} $ & $ \dynkinDfive{1}{1}{1}{0}{1} $ \\
\hline
$ \dynkinDfive{0}{0}{0}{1}{0} $ & $ \notin $ & $ \notin $ & $ \dynkinDfive{1}{1}{1}{1}{0} $ & $ \dynkinDfive{1}{1}{1}{1}{1} $ \\
$ \dynkinDfive{0}{0}{1}{1}{0} $ & $ \notin $ & $ \dynkinDfive{1}{1}{1}{1}{0} $ & $ \notin $ & $ \dynkinDfive{1}{1}{2}{1}{1} $ \\
$ \dynkinDfive{0}{1}{1}{1}{0} $ & $ \dynkinDfive{1}{1}{1}{1}{0} $ & $ \notin $ & $ \notin $ & $ \dynkinDfive{1}{2}{2}{1}{1} $ \\
$ \dynkinDfive{0}{0}{1}{1}{1} $ & $ \notin $ & $ \dynkinDfive{1}{1}{1}{1}{1} $ & $ \dynkinDfive{1}{1}{2}{1}{1} $ & $ \notin $ \\
$ \dynkinDfive{0}{1}{1}{1}{1} $ & $ \dynkinDfive{1}{1}{1}{1}{1} $ & $ \notin $ & $ \dynkinDfive{1}{2}{2}{1}{1} $ & $ \notin $ \\
$ \dynkinDfive{0}{1}{2}{1}{1} $ & $ \dynkinDfive{1}{1}{2}{1}{1} $ & $ \dynkinDfive{1}{2}{2}{1}{1} $ & $ \notin $ & $ \notin $
\end{tabular}
\end{center}
The sizes of the fibers are quickly deduced from this table. The fact that the fibers of $ \pr_{0,1} $ and of $ \pr_{1,1} $ are the same size (= 3), and that the three sizes sum to the Coxeter Number (= 8) is not a coincidence; see Theorem \ref{Tmainformula} below.
\end{example}

\begin{examplenum} \label{EXfirstE6example}
Let $ G $ be the adjoint group E6. Following Plate V \cite{bourbakiII}, enumerate $ \simpleroots = \{ \alpha_1, \alpha_2, \alpha_3, \alpha_4, \alpha_5, \alpha_6 \} $ and depict any $ m_1 \alpha_1 + m_2 \alpha_2 + m_3 \alpha_3 + m_4 \alpha_4 + m_5 \alpha_5 + m_6 \alpha_6 \in \roots $ in ``Dynkin format'' as $ \dynkinEsix{m_1}{m_3}{m_4}{m_5}{m_6}{m_2} $. By Plate V (XII) \cite{bourbakiII}, there is a generator $ w \in \prW ( \Omega ) \cong \Z / 3 \Z $ whose permutation of $ \simpleaffinegradients $ has disjoint cycle decomposition $ ( \alpha_1, - \highestroot, \alpha_6 ) ( \alpha_3, \alpha_2, \alpha_5 ) ( \alpha_4 ) $, so $ R = \dynkinEsix{1}{0}{0}{0}{0}{0} $ and $ S = \dynkinEsix{0}{0}{0}{0}{1}{0} $. By computing a table\footnote{The table for E6 can be viewed by editing the \texttt{.tex} file and rebuilding the document.} as in the previous example, it is seen that the fibers of $ \pr_{0,1} $, $ \pr_{1,0} $, $ \pr_{1,1} $ are all the same size (= 4).
\end{examplenum}

\begin{notation}
$ F_{i,j} \defeq \sum_{ a \in \flippingset_{i,j} } a^{\vee} $
\end{notation}

\begin{corollary} \label{Cdependencerelation}
If $ a, b, c \in \N $ are the sizes of the fibers of $ \pr_{0,1}, \pr_{1,0}, \pr_{1,1} $ then 
\begin{equation*}
a F_{0,1} + b F_{1,0} = c F_{1,1}
\end{equation*}
\end{corollary}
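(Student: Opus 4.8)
The plan is to sum a single additive identity over the set $ \Sigma_{R,S} $ and then reorganize the resulting three sums fiber-by-fiber. First I would record the one structural input that makes coroots behave additively here: under the standing hypothesis $ w \in \prW ( \Omega ) $, $ | w | \geq 3 $ the root system $ \roots $ is simply-laced, so by property (\ref{Lsimplylaced_samelength}) of Lemma \ref{Lsimplylaced} all roots have a common length. Choosing a $ \finiteweylgroup $-invariant inner product $ ( \emptyargument | \emptyargument ) $ on $ V $ normalized so that $ ( a | a ) = 2 $ for every $ a \in \roots $, the assignment $ v \mapsto ( v \mid \emptyargument ) $ is a \emph{linear} isomorphism $ V \directedisom V^{*} $ that sends each root $ a $ to $ a^{\vee} = 2 ( a \mid \emptyargument ) / ( a | a ) $. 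Linearity then yields the key elementary fact: whenever $ \alpha, \beta, \gamma \in \roots $ satisfy $ \alpha + \beta = \gamma $, one has $ \alpha^{\vee} + \beta^{\vee} = \gamma^{\vee} $.

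Next I would apply this to every triple $ ( \alpha, \beta, \gamma ) \in \Sigma_{R,S} $ — for which $ \alpha + \beta = \gamma $ holds by the definition of $ \Sigma_{R,S} $ — and add the identities, obtaining $ \sum_{\Sigma_{R,S}} \alpha^{\vee} + \sum_{\Sigma_{R,S}} \beta^{\vee} = \sum_{\Sigma_{R,S}} \gamma^{\vee} $. Then I would evaluate each of the three sums by partitioning $ \Sigma_{R,S} $ into the fibers of the relevant projection: for instance $ \sum_{\Sigma_{R,S}} \alpha^{\vee} = \sum_{\alpha \in \flippingset_{0,1}} \# \pr_{0,1}^{-1} ( \alpha ) \cdot \alpha^{\vee} $, which by Proposition \ref{Pconstantfibers} (all fibers of $ \pr_{0,1} $ have the common size $ a $) equals $ a \sum_{\alpha \in \flippingset_{0,1}} \alpha^{\vee} = a F_{0,1} $; likewise the $ \beta $-sum is $ b F_{1,0} $ and the $ \gamma $-sum is $ c F_{1,1} $, using that all fibers of $ \pr_{1,0} $ (resp. $ \pr_{1,1} $) have size $ b $ (resp. $ c $). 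Substituting gives the claimed relation $ a F_{0,1} + b F_{1,0} = c F_{1,1} $.

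There is essentially no obstacle internal to this argument: it is a one-line computation once Proposition \ref{Pconstantfibers} is available (its proof is deferred to \S\ref{SSproofofconstantfibers}). The only point that requires a moment's attention is the passage from the root relation $ \alpha + \beta = \gamma $ to the coroot relation $ \alpha^{\vee} + \beta^{\vee} = \gamma^{\vee} $, which would fail outside the simply-laced setting but is automatic here; if one preferred to avoid invoking an inner product, the same coroot identity can be read off from the classification of rank-two subsystems, but the linear-isomorphism argument is cleanest. One should also note the degenerate case $ \Sigma_{R,S} = \emptyset $, equivalently $ a = b = c = 0 $, in which the asserted identity reads $ 0 = 0 $ and is vacuous.
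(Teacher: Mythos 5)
Your proposal is correct and takes essentially the same approach as the paper: both sum $\alpha^{\vee}+\beta^{\vee}=\gamma^{\vee}$ over $\Sigma_{R,S}$ and then reorganize the three resulting sums fiber-by-fiber via Proposition \ref{Pconstantfibers}. The only difference is that you explicitly justify the simply-laced coroot additivity $\alpha+\beta=\gamma \Rightarrow \alpha^{\vee}+\beta^{\vee}=\gamma^{\vee}$ via the normalized Weyl-invariant inner product, whereas the paper invokes it tacitly.
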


\begin{proof}
Because fibers of $ \pr_{1,1} $ are all the same size $ c $, summing all $ \alpha $ and all $ \beta $ for which $ ( \alpha, \beta, \alpha + \beta ) \in \Sigma_{R,S} $ clearly yields $ c F_{1,1} $. On the other hand, because fibers of $ \pr_{0,1} $ are the same size $ a $ and the fibers of $ \pr_{1,0} $ are the same size $ b $, this sum is also $ a F_{0,1} + b F_{1,0} $.
\end{proof}

The height function $ \height : \roots \rightarrow \Z $, and pullbacks of it by any element of $ \finiteweylgroup $, extend to become elements of $ V^{*} $. In particular, $ \height, \height \circ w, \height \circ w^2 \in V^{*} $. It happens that 
\begin{equation*}
F_w \in \myspan ( \height, \height \circ w, \height \circ w^2 )
\end{equation*}
within $ V^{*} $; a much more precise version of this is next:

\begin{theorem}[Summation Formula \#2] \label{Tmainformula}
If $ a, b, c \in \N $ are as in Corollary \ref{Cdependencerelation} then $ a + b + c = h $ (Coxeter Number), $ a = c $, and 
\begin{equation*}
h \cdot F_w ( \emptyargument ) = c \cdot [ \height ( \emptyargument ) - 2 \cdot \height ( w ( \emptyargument ) ) + \height ( w^2 ( \emptyargument ) ) ]
\end{equation*}
\end{theorem}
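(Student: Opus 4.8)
### Proof proposal

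The plan is to extract the identity $h \cdot F_w = c \cdot [\height - 2\height\circ w + \height\circ w^2]$ from the dependence relation $aF_{0,1} + bF_{1,0} = cF_{1,1}$ of Corollary \ref{Cdependencerelation}, combined with the bookkeeping supplied by Lemma \ref{LcoeffRSdescription}. First I would note the basic decompositions coming from the partition $\{\flippingset_{0,0},\flippingset_{0,1},\flippingset_{1,0},\flippingset_{1,1}\}$ of $\positiveroots$: summing coroots gives $2\hat\rho = F_{0,0} + F_{0,1} + F_{1,0} + F_{1,1}$, and from the descriptions in Lemma \ref{LcoeffRSdescription} one reads off how $w$ acts on each piece. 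Concretely, $w$ sends $\lengthset(w) = \flippingset_{1,0}\sqcup\flippingset_{1,1}$ to negative roots and fixes the sign on $\flippingset_{0,0}\sqcup\flippingset_{0,1}$; iterating, $w^2$ is negative exactly on $\flippingset_{0,1}\sqcup\flippingset_{1,1}$. Using the standard fact (as in the proof of Proposition \ref{Pfirstformula}) that $w^{-1}(\hat\rho) = \hat\rho - \sum_{\theta\in\lengthset(w)}\theta^\vee$, and its analogue for $w^2$, I can express both $\hat\rho - w(\hat\rho)$ and $\hat\rho - w^2(\hat\rho)$ — equivalently the functionals $\height - \height\circ w$ and $\height - \height\circ w^2$ — as explicit integer combinations of $F_{0,1}, F_{1,0}, F_{1,1}$. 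Since $F_w = F_{1,0}$, the goal reduces to a linear-algebra computation in $V^*$: show that $h\cdot F_{1,0}$ equals the relevant combination, given the single relation from Corollary \ref{Cdependencerelation}.

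Carrying this out: Proposition \ref{Pfirstformula} with $a$ ranging over $\roots$ yields $\hat\rho - w^{-1}(\hat\rho) = \sum_{b\in\lengthset(w)}b^\vee = F_{1,0} + F_{1,1}$, and applying the Forward–Backward style symmetry (or simply replacing $w$ by $w^2$ and using that $\lengthset(w^2)$ is, by Lemma \ref{LcoeffRSdescription}, the set where $w^2$ is negative, namely $\flippingset_{0,1}\sqcup\flippingset_{1,1}$) gives $\hat\rho - w^{-2}(\hat\rho) = F_{0,1} + F_{1,1}$. Re-expressed with the height function via $\height(\emptyargument) = \langle\emptyargument,\hat\rho\rangle$ and $\finiteweylgroup$-invariance of the pairing, these say $\height - \height\circ w = \langle\emptyargument, F_{1,0}+F_{1,1}\rangle$ and $\height - \height\circ w^2 = \langle\emptyargument, F_{0,1}+F_{1,1}\rangle$ as elements of $V^*$. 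Hence $\height - 2\height\circ w + \height\circ w^2 = 2(\height - \height\circ w) - (\height - \height\circ w^2) = \langle\emptyargument, 2F_{1,0} + 2F_{1,1} - F_{0,1} - F_{1,1}\rangle = \langle\emptyargument, 2F_{1,0} + F_{1,1} - F_{0,1}\rangle$. Now substitute the relation $aF_{0,1} + bF_{1,0} = cF_{1,1}$ from Corollary \ref{Cdependencerelation}: multiplying the target by $c$ and using this relation should collapse the combination to a scalar multiple of $F_{1,0} = F_w$, and matching coefficients will force both $a = c$ and $a + b + c = h$.

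The genuine obstacle — and the part I expect to require real work rather than formal manipulation — is establishing $a + b + c = h$ and $a = c$, i.e. pinning down the fiber sizes in terms of the Coxeter number. The relation of Corollary \ref{Cdependencerelation} alone only determines $(a,b,c)$ up to scaling unless one knows that $F_{0,1}, F_{1,0}, F_{1,1}$ span a $2$-dimensional space with exactly one relation among them; proving the relation is unique (no second independent relation) is where I expect to lean on the extremal-element observations ($S$ minimal in $\flippingset_{0,1}$, $R$ minimal in $\flippingset_{1,0}$, $\highestroot$ maximal in $\flippingset_{1,1}$) together with Proposition \ref{Pconstantfibers}. For the numerical value, the natural route is: pair the relation with a well-chosen covector — the obvious candidate is $\hat\rho$ itself, since $\langle \alpha^\vee, \hat\rho\rangle = \height(\alpha^\vee)$-type quantities are controlled, or better, evaluate against the fundamental coweight dual to $R$ or to $S$ — and use that $\sum_{\alpha\in\simpleaffinegradients} m_\alpha = h$ (the Coxeter number as the sum of marks of the highest root plus one). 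Equivalently, summing the heights (or the $R$- and $S$-coefficients) over each $\flippingset_{i,j}$ and using that every root has $R$- and $S$-coefficients in $\{-1,0,1\}$ (Corollary \ref{CcoeffsRSonly01}) should let me count $\#\flippingset_{i,j}$-weighted sums and identify $c$ with the common fiber size, $a+b+c$ with $h$. I would close by remarking that the "$a = c$" symmetry is the shadow of the Forward–Backward symmetry of Lemma \ref{Lsymmetry}: replacing $w$ by $w^{-1}$ interchanges the roles of $\flippingset_{0,1}$ and $\flippingset_{1,0}$ up to the $w^2$-twist, so the fiber sizes of $\pr_{0,1}$ and $\pr_{1,1}$ must agree.
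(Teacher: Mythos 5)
Your formal reduction is correct and essentially identical to the paper's: the height identities $\height - \height\circ w = \langle\emptyargument, F_{1,0} + F_{1,1}\rangle$ and $\height - \height\circ w^2 = \langle\emptyargument, F_{0,1} + F_{1,1}\rangle$ are exactly the paper's $\varrho^{\prime}$ and $\varrho^{\prime} + \varrho^{\prime\prime}$, and combining them with Corollary \ref{Cdependencerelation} reduces the theorem precisely to the two numerical claims $a = c$ and $a + b + c = h$. Your plan to get $a = c$ from the Forward--Backward Symmetry of Lemma \ref{Lsymmetry} is also the paper's route: one evaluates the preliminary expression for $F_w$ at $S = w^{-1}(R)$ and for $F_{w^{-1}}$ at $-\highestroot = w(R)$, and comparing via the lemma gives $-a = -c$.

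Where your proposal has a genuine gap is the identity $a + b + c = h$, which you rightly flag as the hard part but only gesture at. The options you list --- pairing against $\hat\rho$, against a fundamental (co)weight, counting marks of $\highestroot$ --- are not carried through, and none of them identifies the step the paper actually needs. The key is $a = \langle S, F_{1,1}\rangle$: the fiber of $\pr_{0,1}$ over $S$ is indexed by $\gamma \in \flippingset_{1,1}$ with $S - \gamma \in \roots$, and because $\roots$ is simply-laced the $\gamma$-string through $S$ has $p = 0$ (since $S + \gamma \notin \roots$, by Corollary \ref{CcoeffsRSonly01}) and $q \leq 1$, so $\langle S, \gamma^{\vee}\rangle = q - p \in \{0,1\}$ with $\langle S, \gamma^{\vee}\rangle = 1$ exactly when $S - \gamma \in \roots$. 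Summing over $\gamma \in \flippingset_{1,1}$ gives $a = \langle S, F_{1,1}\rangle$. With that established, pairing $\varrho^{\prime} = F_{1,0} + F_{1,1}$ against $S$ and substituting the preliminary formula for $F_w(S)$ produces a single linear equation whose solution is $a + b + c = h$. As you yourself observe, Corollary \ref{Cdependencerelation} only pins down $(a,b,c)$ up to scalar, so some such explicit pairing computation --- not a formal rearrangement --- is unavoidable, and you would need to supply it.
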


\begin{proof}
Set $ \varrho \defeq \sum_{a > 0} a^{\vee} $ (so $ \varrho = 2 \hat{\rho} $, in previous notation) and recall that 
\begin{equation*}
2 \cdot \height = \langle \emptyargument, \varrho \rangle
\end{equation*}

Set $ \varrho^{\prime} \defeq \sum_{a \in \lengthset ( w )} a^{\vee} $ and conclude from the identity $ \height - \height \circ w = \langle \emptyargument, \varrho^{\prime} \rangle $ (Proposition \ref{Pfirstformula}) that 
\begin{equation*}
\langle \emptyargument, w^{-1} ( \varrho ) \rangle = 2 \cdot \height \circ w = \langle \emptyargument, \varrho - 2 \varrho^{\prime} \rangle
\end{equation*}

Finally, note that $ w^{-1} ( \lengthset ( w ) ) = \{ a \in \roots \suchthat w ( a ) > 0, w^2 ( a ) < 0 \} $. Set $ \varrho^{\prime \prime} \defeq \sum_{a \in w^{-1} ( \lengthset ( w ) )} a^{\vee} $, so that $ w^{-1} ( \varrho^{\prime} ) = \varrho^{\prime \prime} $ and therefore 
\begin{equation*}
\langle \emptyargument, w^{-2} ( \varrho ) \rangle = 2 \cdot \height \circ w^2 = \langle \emptyargument, \varrho - 2 \varrho^{\prime} - 2 \varrho^{\prime \prime} \rangle
\end{equation*}

Since the matrix sending $ \varrho, \varrho^{\prime}, \varrho^{\prime \prime} $ to $ \varrho, w^{-1} ( \varrho ), w^{-2} ( \varrho ) $ is triangular with non-zero diagonal, and since $ F_w = \langle \emptyargument, F_{1,0} \rangle $, it suffices to show $ F_{1,0} \in \myspan ( \varrho, \varrho^{\prime}, \varrho^{\prime \prime} ) $ and to determine a linear combination.

Recalling the notation $ F_{i,j} = \sum_{a \in \flippingset_{i,j}} a^{\vee} $ it is clear that 
\begin{align}
\nonumber \varrho &= F_{0,0} + F_{0,1} + F_{1,0} + F_{1,1} \\
\label{Evarrhoprime} \varrho^{\prime} &= F_{1,0} + F_{1,1} \\
\nonumber \varrho^{\prime \prime} &= F_{0,1} - F_{1,0}
\end{align}

By Corollary \ref{Cdependencerelation}, $ F_{1,1} = A \cdot F_{0,1} + B \cdot F_{1,0} $ for $ A = a / c $ and $ B = b / c $ and so $ \varrho^{\prime} = A \cdot F_{0,1} + ( B + 1 ) \cdot F_{1,0} $. Thus, $ \frac{1}{A} \cdot \varrho^{\prime} - \varrho^{\prime \prime} = ( \frac{B + 1}{A} + 1 ) \cdot F_{1,0} $ and so
\begin{equation*}
c \cdot \varrho^{\prime} - a \cdot \varrho^{\prime \prime} = ( a + b + c ) \cdot F_{1,0}
\end{equation*}

Using the identities $ \langle \emptyargument, \varrho^{\prime} \rangle = \height - \height \circ w $ and $ \langle \emptyargument, \varrho^{\prime \prime} \rangle = \height \circ w - \height \circ w^2 $ inverse to those above, a preliminary version of the desired linear combination is obtained:
\begin{equation} \label{Epreliminaryformula}
( a + b + c ) \cdot F_w = c \cdot \height - ( a + c ) \cdot \height \circ w + a \cdot \height \circ w^2
\end{equation}

Since $ S \in \flippingset_{0,1} $, the common size $ a $ of the fibers in $ \Sigma $ of $ \pr_{0,1} $ may be computed by asking how many $ \gamma \in \flippingset_{1,1} $ satisfy $ \gamma - S \in \roots $. Equivalently, how many $ \gamma \in \flippingset_{1,1} $ satisfy $ S - \gamma \in \roots $. I claim that this number is simply $ \langle S, F_{1,1} \rangle $. Since $ \roots $ is simply-laced, $ \langle S, \gamma^{\vee} \rangle \in \{ -1, 0, 1 \} $. If $ p, q \in \N $ are maximal such that $ S + i \gamma \in \roots $ for all $ - q \leq i \leq p $ then necessarily $ p = 0 $ since $ S + \gamma \notin \roots $ ($ S + \gamma $ has $ S $-coefficient $ 2 $, which contradicts Corollary \ref{CcoeffsRSonly01}) and $ q = 1 $ if and only if $ S - \gamma \in \roots $. Hence, $ a = \langle S, F_{1,1} \rangle $. By (\ref{Evarrhoprime}),
\begin{align*}
\langle S, \varrho^{\prime} \rangle &= \langle S, F_{1,0} \rangle + \langle S, F_{1,1} \rangle \\
\langle S, \varrho^{\prime} \rangle &= F_w ( S ) + a \\
\height ( S ) - \height ( w ( S ) ) &= \frac{1}{a+b+c} [ c \height ( S ) - ( a + c ) \height ( w ( S ) ) + a \height ( w^2 ( S ) ) ] + a \\
1 - 1 &= \frac{1}{a+b+c} [ c - ( a + c ) + a ( 1 - h ) ] + a \\
0 &= - a h + ( a + b + c ) a \\
h &= a + b + c
\end{align*}

Using the known formula (\ref{Epreliminaryformula}) in two different ways, compute:
\begin{align*}
F_w ( w^{-1} ( R ) ) = F_w ( S ) = \frac{1}{h} [ c \cdot 1 - ( a + c ) \cdot 1 + a \cdot ( 1 - h ) ] &= - a \\
F_{w^{-1}} ( w ( R ) ) = F_{w^{-1}} ( - \highestroot ) = \frac{1}{h} [ c \cdot ( 1 - h ) - ( a + c ) \cdot 1 + a \cdot 1 ] &= - c 
%\frac{1}{h} [ c \cdot \height ( S ) - ( a + c ) \cdot \height ( w ( S ) ) + a \cdot \height ( w^2 ( S ) ) ]
%\frac{1}{h} [ c \cdot \height ( - \highestroot ) - ( a + c ) \cdot \height ( w^{-1} ( - \highestroot ) ) + a \cdot \height ( w^{-2} ( - \highestroot ) ) ]
\end{align*}
By Lemma \ref{Lsymmetry}, $ F_w ( w^{-1} ( R ) ) = F_{w^{-1}} ( w ( R ) ) $, as desired.
\end{proof}

\begin{example}
Let $ G $ be the adjoint group D5 and let $ w $ be as in Example \ref{EXfirstD5example}. A formula for $ F_w $ may be computed by brute-force and doing so yields $ F_w ( \emptyargument ) = \frac{3}{8} \cdot \height ( \emptyargument ) - \frac{3}{4} \cdot \height ( w ( \emptyargument ) ) + \frac{3}{8} \cdot \height ( w^2 ( \emptyargument ) ) $. The Coxeter Number for D5 is $ h = 8 $ and, by the above table, $ ( a, b, c ) = ( 3, 2, 3 ) $. Thus, the predicted formula is $ 8 \cdot F_w ( \emptyargument ) = 3 \cdot [ \height ( \emptyargument ) - 2 \cdot \height ( w ( \emptyargument ) ) + \height ( w^2 ( \emptyargument ) ) ] $, which agrees.
\end{example}

\begin{example}
Let $ G $ be the adjoint group E6 and let $ w $ be as in Example \ref{EXfirstE6example}. A formula for $ F_w $ may be computed by brute-force and doing so yields $ F_w ( \emptyargument ) = \frac{1}{3} \cdot \height ( \emptyargument ) - \frac{2}{3} \cdot \height ( w ( \emptyargument ) ) + \frac{1}{3} \cdot \height ( w^2 ( \emptyargument ) ) $. The Coxeter Number for E6 is $ h = 12 $ and, by Example \ref{EXfirstE6example}, $ ( a, b, c ) = ( 4, 4, 4 ) $. Thus, the predicted formula is $ 12 \cdot F_w ( \emptyargument ) = 4 \cdot [ \height ( \emptyargument ) - 2 \cdot \height ( w ( \emptyargument ) ) + \height ( w^2 ( \emptyargument ) ) ] $, which agrees.
\end{example}

The following fact, which has apparent significance for the comparison of $ \canonicalsection ( w )^2 $ with $ \canonicalsection ( w^2 ) $, will be used later:

\begin{corollary} \label{Csumeven}
$ F_w ( R ) \in 2 \Z $
\end{corollary}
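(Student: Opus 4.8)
The plan is to evaluate the identity of Theorem \ref{Tmainformula} at the particular simple root $R$. Since that identity is an equality of linear functionals on $V$, I may substitute $R$ for the argument directly, obtaining $h \cdot F_w(R) = c \cdot [\height(R) - 2\height(w(R)) + \height(w^2(R))]$; the point will be that the bracketed integer is exactly $2h$, which forces $F_w(R) = 2c \in 2\Z$.

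To execute this I first record the three heights involved. Since $R \in \simpleroots$, $\height(R) = 1$. By the choice of $R$ we have $w(R) = -\highestroot$, so $\height(w(R)) = -\height(\highestroot) = 1 - h$, using the standard relation $\height(\highestroot) = h - 1$ between the Coxeter Number and the height of the highest root (the same fact is already invoked inside the proof of Theorem \ref{Tmainformula}). Finally, $w^2(R) = w(-\highestroot)$; because $w$ permutes $\simpleaffinegradients = \simpleroots \cup \{-\highestroot\}$ and, by Lemma \ref{Lordersameorbitmaximal} together with the standing hypothesis $|w| \geq 3$, the $w$-orbit of $-\highestroot$ has more than one element, we get $w(-\highestroot) \in \simpleroots$ and hence $\height(w^2(R)) = 1$.

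Substituting these values gives $h \cdot F_w(R) = c \cdot [\,1 - 2(1-h) + 1\,] = c \cdot 2h = 2hc$, so $F_w(R) = 2c$, which lies in $2\Z$ since $c \in \N$. There is no genuine obstacle; the only step needing a moment's care is the identification of $w^2(R)$ as an honest simple root (height $1$) rather than as $-\highestroot$, and this is exactly where the assumption $|w| \geq 3$ is used — for $|w| \leq 2$ one is instead in the regime covered by Proposition \ref{Pfirstformula}.
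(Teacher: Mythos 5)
Your proof is correct and mirrors the paper's own computation for the case covered by the standing hypothesis $|w| \geq 3$: evaluate Theorem \ref{Tmainformula} at $R$, use $\height(R)=1$, $\height(w(R))=\height(-\highestroot)=1-h$, and $\height(w^2(R))=1$ (this last because $w(-\highestroot)\in\simpleroots$, thanks to Lemma \ref{Lordersameorbitmaximal} and $|w|\geq 3$), to get $h\cdot F_w(R)=2ch$, hence $F_w(R)=2c$.

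One point worth flagging: although the corollary is stated inside \S\ref{SSformulasforalcovestabilizers} where $|w|\geq 3$ is in force, the paper's own proof additionally treats $|w|=2$, because the corollary is later invoked in Proposition \ref{Ptrivialcharacter} for arbitrary $w\in\prW(\Omega)$. There, Proposition \ref{Pfirstformula} gives $F_w(R)=\height(R)-\height(w(R))=1-(1-h)=h$, so one must argue separately that $h$ is even; this is not automatic (in type A$_n$ one has $h=n+1$), and the paper supplies the missing step by noting that $|w|=2$ forces $2\mid\#\Omega_{\ad}=h$ in type A (and $h$ is always even otherwise). Your remark that ``for $|w|\leq 2$ one is instead in the regime covered by Proposition \ref{Pfirstformula}'' would need this extra parity argument to actually establish $F_w(R)\in 2\Z$ in that regime.
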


\begin{proof}
Suppose first that $ | w | = 2 $. In this case $ \flippingset ( w ) = \lengthset ( w ) $ and Formula \#1 (Proposition \ref{Pfirstformula}) says $ F_w ( R ) = 1 - ( 1 - h ) = h $. This proves the claim in that case since the Coxeter Number $ h $ is even in ``most'' cases, including all cases currently considered (this is less obvious for type A, but still true: $ | w | = 2 $ implies $ 2 $ divides $ \# \Omega_{\ad} = \rank ( \roots ) + 1 = h $). Now suppose $ | w | \geq 3 $. Formula \#2 (Theorem \ref{Tmainformula}) says $ h \cdot F_w ( R ) = c [ 1 - 2 ( 1 - h ) + 1 ] = 2 \cdot c \cdot h $, which implies the claim regardless of $ h $.
\end{proof}

\begin{example}
In \cite{roro}, it was necessary to calculate $ F_w ( R ) $. The brute-force calculation there for type D and rank $ \ell $ produced the number $ 2 \ell - 4 $. For D5 and $ w $ as in Example \ref{EXfirstD5example}, the result is $ 6 $. By the formula above, $ h \cdot F_w ( R ) = 3 \cdot [ \height ( R ) - 2 \height ( - \highestroot ) + \height ( - w ( \highestroot ) ) ] = 3 \cdot [ 1 - 2 ( 1 - h ) + 1 ] = 3 \cdot 2 \cdot h $, as expected.
\end{example}

\begin{remark}
The proof of Corollary \ref{Csumeven} explains the curious event that the result of the brute-force calculations in \cite{roro} was sometimes $ h $ (e.g. when $ G $ was adjoint E7).
\end{remark}

\subsection{Proof of Proposition \ref{Pconstantfibers}} \label{SSproofofconstantfibers}

To prove Proposition \ref{Pconstantfibers}, I construct for each $ i, j $ bijections among the fibers of $ \pr_{i,j} $, starting with $ \pr_{1,1} $.

\begin{lemma} \label{Linduction}
Let $ \gamma^{\prime}, \gamma \in \flippingset_{1,1} $ be arbitrary and set $ a \defeq \gamma^{\prime} - \gamma $. Assume that $ \gamma^{\prime} \geq \gamma $ and that $ a \in \roots $. \textbf{Assertion:}
\begin{enumerate}
\item If $ ( \alpha^{\prime}, \beta^{\prime}, \gamma^{\prime} ) \in \Sigma_{R,S} $ then exactly one of $ \langle \alpha^{\prime}, a^{\vee} \rangle $, $ \langle \beta^{\prime}, a^{\vee} \rangle $ is $ 1 $, the other is $ 0 $, and there is a canonical choice of $ \nu^{\prime} \in \{ \alpha^{\prime}, \beta^{\prime} \} $ such that $ \nu^{\prime} - a \in \roots $. \label{Linductionhighfibertolowfiber}

\item If $ ( \alpha, \beta, \gamma ) \in \Sigma_{R,S} $ then exactly one of $ \langle \alpha, a^{\vee} \rangle $, $ \langle \beta, a^{\vee} \rangle $ is $ -1 $, the other is $ 0 $, and there is a canonical choice of $ \nu \in \{ \alpha, \beta \} $ such that $ \nu + a \in \roots $. \label{Linductionlowfibertohighfiber}
\end{enumerate}
\end{lemma}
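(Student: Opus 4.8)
The plan is to reduce everything to the combinatorics of $ a $-strings in $ \roots $, which is simply-laced since $ | w | \geq 3 $ (Lemma \ref{Lsimplylaced}); in particular $ \langle x, y^{\vee} \rangle = \langle y, x^{\vee} \rangle $ for all roots $ x, y $, and $ \langle c, a^{\vee} \rangle \in \{ -1, 0, 1 \} $ for every $ c \in \roots $ with $ c \notin \R \cdot a $. The only other input is $ \langle a, a^{\vee} \rangle = 2 $. First I would handle the bookkeeping about $ a = \gamma^{\prime} - \gamma $: since $ \gamma^{\prime}, \gamma \in \flippingset_{1,1} $ each have $ R $- and $ S $-coefficient $ 1 $, we get $ a_R = a_S = 0 $; as $ a \in \roots $ is nonzero and $ \gamma^{\prime} \geq \gamma $, it is a positive root, so $ a \in \flippingset_{0,0} $. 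Consequently none of $ \gamma, \gamma^{\prime} $ (each with $ R $-coefficient $ 1 $), $ \alpha, \alpha^{\prime} $ (each with $ S $-coefficient $ 1 $), $ \beta, \beta^{\prime} $ (each with $ R $-coefficient $ 1 $) lies in $ \R \cdot a $, so the bound $ \langle c, a^{\vee} \rangle \in \{ -1, 0, 1 \} $ holds with $ c $ any of these roots.

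The key computation is that $ \langle \gamma^{\prime}, a^{\vee} \rangle = 1 $ and $ \langle \gamma, a^{\vee} \rangle = -1 $. Indeed $ \gamma = \gamma^{\prime} - a $ gives $ \langle \gamma^{\prime}, a^{\vee} \rangle = \langle \gamma, a^{\vee} \rangle + \langle a, a^{\vee} \rangle = \langle \gamma, a^{\vee} \rangle + 2 $, and since both sides lie in $ \{ -1, 0, 1 \} $ the only possibility is $ \langle \gamma, a^{\vee} \rangle = -1 $, $ \langle \gamma^{\prime}, a^{\vee} \rangle = 1 $.

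For assertion (\ref{Linductionhighfibertolowfiber}): pairing $ a^{\vee} $ against $ \alpha^{\prime} + \beta^{\prime} = \gamma^{\prime} $ gives $ \langle \alpha^{\prime}, a^{\vee} \rangle + \langle \beta^{\prime}, a^{\vee} \rangle = 1 $ with each summand in $ \{ -1, 0, 1 \} $, so exactly one summand is $ 1 $ and the other $ 0 $. Let $ \nu^{\prime} \in \{ \alpha^{\prime}, \beta^{\prime} \} $ be the root with $ \langle \nu^{\prime}, a^{\vee} \rangle = 1 $; writing the $ a $-string through $ \nu^{\prime} $ as $ [ -q, p ] \cap \Z $ we have $ q - p = \langle \nu^{\prime}, a^{\vee} \rangle = 1 $, so $ q \geq 1 $ and $ \nu^{\prime} - a \in \roots $. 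This $ \nu^{\prime} $ is the canonical choice, because the other root $ \mu^{\prime} \in \{ \alpha^{\prime}, \beta^{\prime} \} $ has $ \langle \mu^{\prime}, a^{\vee} \rangle = 0 $, and if $ \mu^{\prime} - a $ were a root then its $ a $-string would have $ q \geq 1 $, hence also $ p = q \geq 1 $, so its top element $ \mu^{\prime} + p a \notin \R \cdot a $ would satisfy $ \langle \mu^{\prime} + p a, a^{\vee} \rangle = 2 p \geq 2 $, contradicting the simply-laced bound. Assertion (\ref{Linductionlowfibertohighfiber}) is proved identically with $ \gamma $ in place of $ \gamma^{\prime} $: from $ \alpha + \beta = \gamma $ one gets $ \langle \alpha, a^{\vee} \rangle + \langle \beta, a^{\vee} \rangle = -1 $, so exactly one summand is $ -1 $ and the other $ 0 $; if $ \nu $ is the root with $ \langle \nu, a^{\vee} \rangle = -1 $ then its $ a $-string has $ q - p = -1 $, so $ p \geq 1 $ and $ \nu + a \in \roots $, while the other root is excluded by the same top-of-string argument.

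I do not expect a genuine obstacle; the only point requiring care is to confirm at each step that the root to which the bound $ \langle \emptyargument, a^{\vee} \rangle \in \{ -1, 0, 1 \} $ is applied is non-proportional to $ a $, which is exactly where the $ R $- and $ S $-coefficient constraints of Corollary \ref{CcoeffsRSonly01} (built into the definitions of $ \flippingset_{0,0}, \flippingset_{0,1}, \flippingset_{1,0}, \flippingset_{1,1} $) are used.
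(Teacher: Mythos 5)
Your proof is correct and follows essentially the same strategy as the paper: reduce to $\langle \gamma^{\prime}, a^{\vee} \rangle = 1$ (resp.\ $\langle \gamma, a^{\vee} \rangle = -1$), then use the simply-laced bound $\langle \emptyargument, a^{\vee} \rangle \in \{-1,0,1\}$ to split the pairing of $a^{\vee}$ against the sum $\alpha^{\prime} + \beta^{\prime} = \gamma^{\prime}$ and identify the canonical $\nu^{\prime}$. The one small difference is how $\langle \gamma^{\prime}, a^{\vee} \rangle = 1$ is obtained: the paper reads it off the $a$-string through $\gamma^{\prime}$ (forcing $(p,q) = (0,1)$ via $p+q \in \{-1,0,1\}$ and $q \geq 1$), whereas you get both $\langle \gamma^{\prime}, a^{\vee} \rangle = 1$ and $\langle \gamma, a^{\vee} \rangle = -1$ in one step from $\langle \gamma^{\prime}, a^{\vee} \rangle = \langle \gamma, a^{\vee} \rangle + 2$ together with the simply-laced bound on both sides — slightly more economical since it feeds both parts of the lemma symmetrically. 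Your closing argument that the other root $\mu^{\prime}$ does not satisfy $\mu^{\prime} - a \in \roots$ is correct but more than the statement requires; ``canonical'' just means ``the one with pairing $1$ (resp.\ $-1$)'', which is already uniquely determined by the first two claims.
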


\begin{proof}
\textbf{(\ref{Linductionhighfibertolowfiber})} Since $ \alpha^{\prime}, \beta^{\prime} \neq a $, due to the fact that all three come from different $ \flippingset_{i,j} $, the last claim follows from the first two by Theorem 1(i) Ch VI \S1 no. 3 \cite{bourbakiII}: $ \langle \nu^{\prime}, a^{\vee} \rangle > 0 $ and $ \nu^{\prime} \neq a $ implies $ \nu^{\prime} - a \in \roots $. Thus, it suffices to prove the first two claims. Since $ \langle \gamma^{\prime}, a^{\vee} \rangle = \langle \alpha^{\prime}, a^{\vee} \rangle + \langle \beta^{\prime}, a^{\vee} \rangle $ and the only possible two-term partitions of $ 1 $ by $ \{ -1, 0, 1 \} $ are $ 1 = 1 + 0 $ and $ 1 = 0 + 1 $ (recall that $ \roots $ is simply-laced), it suffices to show merely that $ \langle \gamma^{\prime}, a^{\vee} \rangle = 1 $. Let $ p, q \in \N $ be maximal such that $ \gamma^{\prime} + i a \in \roots $ for all $ - q \leq i \leq p $ and set $ \gamma^{\prime \prime} := \gamma^{\prime} + p a $. By Proposition 9(iii) Ch VI \S1 no. 3 \cite{bourbakiII}, $ \langle \gamma^{\prime \prime}, a^{\vee} \rangle = p + q $. Since $ \roots $ is simply-laced, $ p + q \in \{ -1, 0, 1 \} $. Since $ \gamma^{\prime} - a = \gamma \in \roots $, $ q \geq 1 $. It follows from all this that $ ( p, q ) = ( 0, 1 ) $. By the same Proposition 9(iii) Ch VI \S1 no. 3 \cite{bourbakiII}, $ \langle \gamma^{\prime}, a^{\vee} \rangle = q - p = 1 $, as desired. \textbf{(\ref{Linductionlowfibertohighfiber})} The proof of this is nearly identical.
\end{proof}

Fix $ \gamma, \gamma^{\prime} \in \flippingset_{1,1} $, set $ a \defeq \gamma^{\prime} - \gamma $, and assume as before that $ a \in \positiveroots $. By Lemma \ref{Linduction}(\ref{Linductionlowfibertohighfiber}), if $ ( \alpha, \beta, \gamma ) \in \Sigma_{R,S} $ then it is true either that $ ( \alpha + a, \beta, \gamma^{\prime} ) \in \Sigma_{R,S} $ or that $ ( \alpha, \beta + a, \gamma^{\prime} ) \in \Sigma_{R,S} $ and there is a canonical choice among these two possibilities. 

In other words, if the fibers of $ \pr_{1,1} $ over $ \gamma $ and $ \gamma^{\prime} $ are denoted $ \mathfrak{f} $ and $ \mathfrak{f}^{\prime} $ then Lemma \ref{Linduction}(\ref{Linductionlowfibertohighfiber}) provides a function $ \mathfrak{f} \rightarrow \mathfrak{f}^{\prime} $. 

Similarly, Lemma \ref{Linduction}(\ref{Linductionhighfibertolowfiber}) provides a function $ \mathfrak{f}^{\prime} \rightarrow \mathfrak{f} $.

\begin{prop} \label{Pneighboringfiberssamesize}
As above, let $ \gamma, \gamma^{\prime} \in \flippingset_{1,1} $ be arbitrary such that $ \gamma^{\prime} - \gamma \in \positiveroots $, and denote by $ \mathfrak{f} $ and $ \mathfrak{f}^{\prime} $ the fibers of $ \pr_{1,1} $ over $ \gamma $ and $ \gamma^{\prime} $. \textbf{Assertion:} The functions $ \mathfrak{f} \rightarrow \mathfrak{f}^{\prime} $ and $ \mathfrak{f}^{\prime} \rightarrow \mathfrak{f} $ from above are inverse. In particular, $ \mathfrak{f} $ and $ \mathfrak{f}^{\prime} $ are the same size.
\end{prop}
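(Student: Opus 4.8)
The plan is to check directly that the functions $\mathfrak{f} \to \mathfrak{f}^\prime$ and $\mathfrak{f}^\prime \to \mathfrak{f}$ furnished by Lemma \ref{Linduction} are mutually inverse; equality of cardinalities then follows at once. The one point to keep straight is the precise meaning of the ``canonical choice'' in Lemma \ref{Linduction}: in part (\ref{Linductionhighfibertolowfiber}) it is the unique member of $\{ \alpha^\prime, \beta^\prime \}$ with $\langle \emptyargument, a^\vee \rangle = 1$, and in part (\ref{Linductionlowfibertohighfiber}) it is the unique member of $\{ \alpha, \beta \}$ with $\langle \emptyargument, a^\vee \rangle = -1$. (This is exactly what the proof of Lemma \ref{Linduction} extracts from Theorem 1(i) Ch VI \S1 no. 3 \cite{bourbakiII}.) In either case the canonical choice is the unique coordinate whose pairing against $a^\vee$ is nonzero, while the other coordinate pairs to $0$, so the argument reduces to tracking which coordinate carries the nonzero pairing before and after modification by $\pm a$.

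First I would fix $( \alpha, \beta, \gamma ) \in \mathfrak{f}$, let $\nu \in \{ \alpha, \beta \}$ be the canonical choice from Lemma \ref{Linduction}(\ref{Linductionlowfibertohighfiber}), so that $\langle \nu, a^\vee \rangle = -1$ and the other coordinate pairs to $0$ against $a^\vee$, and recall that the forward map replaces $\nu$ by $\nu + a$. Since $\langle a, a^\vee \rangle = 2$, one has
\begin{equation*}
\langle \nu + a, a^\vee \rangle = -1 + 2 = 1 ,
\end{equation*}
while the unmodified coordinate of the image triple still pairs to $0$; hence in that triple it is exactly $\nu + a$ that pairs nontrivially with $a^\vee$, so the canonical choice of Lemma \ref{Linduction}(\ref{Linductionhighfibertolowfiber}) for the backward map applied to it is $\nu + a$, and the backward map replaces $\nu + a$ by $( \nu + a ) - a = \nu$, recovering $( \alpha, \beta, \gamma )$. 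Running the identical computation in the other direction, if $\nu^\prime$ is the canonical choice for a triple in $\mathfrak{f}^\prime$, so that $\langle \nu^\prime, a^\vee \rangle = 1$, then the backward map replaces it by $\nu^\prime - a$, which satisfies $\langle \nu^\prime - a, a^\vee \rangle = 1 - 2 = -1$ while the other coordinate still pairs to $0$; so the forward map's canonical choice on the resulting triple is $\nu^\prime - a$, and applying it returns the original triple. Thus the two maps are inverse bijections, and in particular $\# \mathfrak{f} = \# \mathfrak{f}^\prime$.

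I do not expect a genuine obstacle. The single subtlety, flagged above, is that replacing the active coordinate by $\pm a$ flips the sign of its pairing against $a^\vee$ between $-1$ and $1$ while leaving the other coordinate's pairing equal to $0$, so that the canonical choice on the target side stays in the same coordinate; this is precisely what makes the two maps undo one another. The remaining facts, namely that the modified triples genuinely lie in $\Sigma_{R,S}$ (with the modified member a positive root of the correct $\flippingset_{i,j}$-type) and project to $\gamma^\prime$ and $\gamma$ respectively, have already been established in the discussion preceding the statement, essentially because $\flippingset_{0,0}, \flippingset_{0,1}, \flippingset_{1,0}, \flippingset_{1,1}$ partition $\positiveroots$ according to the $R$- and $S$-coefficients.
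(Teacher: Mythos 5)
Your proposal is correct and follows the same overall strategy as the paper: show that the forward map's canonical choice and the backward map's canonical choice land on the same coordinate, so the two maps undo one another. The one technical difference is in how you verify this. You observe directly, by linearity of the pairing together with $\langle a, a^\vee \rangle = 2$, that modifying $\nu$ to $\nu + a$ flips the pairing from $-1$ to $1$ while the untouched coordinate still pairs to $0$. The paper instead reaches the same conclusion by re-examining the $a$-root-string through the modified coordinate (essentially re-invoking Proposition 9(iii) of Bourbaki Ch VI \S1 no.~3). Your version is marginally more elementary and avoids the second pass through root-string combinatorics, but the content is equivalent; both arguments correctly identify that the ``active'' coordinate is preserved under the round trip, which is the only point at issue once Lemma~\ref{Linduction} is in hand.
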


\begin{proof}
Suppose $ ( \alpha^{\prime}, \beta^{\prime}, \gamma^{\prime} ) \in \Sigma_{R,S} $. Reviewing the proof of Lemma \ref{Linduction}, if $ \alpha^{\prime} $ happens to be the element $ \nu^{\prime} \in \{ \alpha^{\prime}, \beta^{\prime} \} $ such that $ \langle \nu^{\prime}, a^{\vee} \rangle = 1 $ then the claim is that $ \alpha := \alpha^{\prime} - a \in \roots $ is the element $ \nu \in \{ \alpha^{\prime} - a, \beta \} $ such that $ \langle \nu, a^{\vee} \rangle = -1 $. This is obvious by consideration of strings: the $ a $-string through $ \alpha^{\prime} $ is $ \alpha^{\prime}, \alpha^{\prime} - a $ and so the $ a $-string through $ \alpha $ must be $ \alpha + a, \alpha $ which means that $ \langle \alpha, a^{\vee} \rangle = -1 $. The proof when instead $ \beta^{\prime} $ is the element $ \nu^{\prime} \in \{ \alpha^{\prime}, \beta^{\prime} \} $ such that $ \langle \nu^{\prime}, a^{\vee} \rangle = 1 $ is identical. This establishes that $ \mathfrak{f} \rightarrow \mathfrak{f}^{\prime} $ is a left-inverse to $ \mathfrak{f}^{\prime} \rightarrow \mathfrak{f} $. That the other composite is the identity is proved in a nearly identical way.
\end{proof}

\begin{corollary} \label{Cfibersinbijection}
For any $ \gamma \in \flippingset_{1,1} $, the fiber of $ \pr_{1,1} $ over $ \gamma $ is the same size as the fiber of $ \pr_{1,1} $ over $ \highestroot $.
\end{corollary}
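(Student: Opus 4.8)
The plan is to deduce the statement from Proposition \ref{Pneighboringfiberssamesize} by a connectedness argument on $\flippingset_{1,1}$. Proposition \ref{Pneighboringfiberssamesize} says that whenever $\gamma, \gamma' \in \flippingset_{1,1}$ differ by a root (in either order, after relabeling so the difference is positive), the fibers of $\pr_{1,1}$ over $\gamma$ and $\gamma'$ have the same size. So it suffices to show that for every $\gamma \in \flippingset_{1,1}$ there is a chain $\gamma = \gamma_0, \gamma_1, \ldots, \gamma_k = \highestroot$ of elements of $\flippingset_{1,1}$ in which consecutive terms differ by an element of $\roots$; then transitivity of ``same size'' finishes it. Recall that $\highestroot$ is the unique maximal element of $\flippingset_{1,1}$, which suggests climbing upward by adding simple roots.

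First I would fix $\gamma \in \flippingset_{1,1}$ with $\gamma \neq \highestroot$ and argue that there is a simple root $\alpha \in \simpleroots$ with $\gamma + \alpha \in \roots$ and $\gamma + \alpha \in \flippingset_{1,1}$; iterating this (the height strictly increases and is bounded by $\height(\highestroot)$) produces the desired chain. For the existence of such $\alpha$: since $\gamma < \highestroot$ and both are positive roots, standard root-system theory (e.g. Proposition 19 or the argument behind it in Ch VI \S1 no. 6 \cite{bourbakiII}, or a direct argument using that $(\highestroot - \gamma \mid \highestroot) > 0$ combined with $(\alpha_i \mid \highestroot) \geq 0$ for all $i$) gives a simple $\alpha$ with $\gamma + \alpha \in \roots$ and $\gamma + \alpha \leq \highestroot$. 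It then remains to check that $\gamma + \alpha$ lands in $\flippingset_{1,1}$ rather than in some other $\flippingset_{i,j}$. By Lemma \ref{LcoeffRSdescription}, membership in $\flippingset_{1,1}$ is exactly the condition $(\gamma+\alpha)_R = 1$ and $(\gamma+\alpha)_S = 1$. Since $\gamma_R = \gamma_S = 1$ and, by Corollary \ref{CcoeffsRSonly01}, every root has $R$- and $S$-coefficients in $\{-1,0,1\}$, the coefficient $(\gamma+\alpha)_R = 1 + \alpha_R$ must lie in $\{-1,0,1\}$, which forces $\alpha_R = 0$; similarly $\alpha_S = 0$. Hence $(\gamma+\alpha)_R = (\gamma+\alpha)_S = 1$ and $\gamma + \alpha \in \flippingset_{1,1}$, so consecutive terms of the chain differ by the root $\alpha$ and also both lie in $\flippingset_{1,1}$, exactly the hypothesis of Proposition \ref{Pneighboringfiberssamesize}.

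The main obstacle, I expect, is the purely root-theoretic claim that from any positive root $\gamma$ strictly below $\highestroot$ one can add a \emph{simple} root and stay in $\roots$ (and not overshoot $\highestroot$). This is classical but needs to be invoked or argued carefully: the cleanest route is to take $\alpha \in \simpleroots$ with $(\highestroot - \gamma \mid \alpha) > 0$ — such $\alpha$ exists because $\highestroot - \gamma$ is a nonzero nonnegative combination of simple roots, so it cannot be orthogonal to all of them — and then note $(\gamma \mid \alpha) = (\highestroot \mid \alpha) - (\highestroot - \gamma \mid \alpha) < (\highestroot \mid \alpha)$; combined with $(\highestroot \mid \alpha) \geq 0$ and a short case check using simply-lacedness (Lemma \ref{Lsimplylaced}) and Theorem 1(i) Ch VI \S1 no. 3 \cite{bourbakiII}, one gets either $\gamma + \alpha \in \roots$ directly or deduces it after replacing $\alpha$. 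Once that is in hand, everything else is bookkeeping with Lemma \ref{LcoeffRSdescription} and Corollary \ref{CcoeffsRSonly01}, and the corollary follows immediately from Proposition \ref{Pneighboringfiberssamesize} by chaining.
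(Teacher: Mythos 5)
Your overall strategy is essentially the paper's: induct on $\height(\gamma)$, compare the fiber over $\gamma$ with the fiber over a larger element of $\flippingset_{1,1}$ via Proposition \ref{Pneighboringfiberssamesize}, and propagate up to $\highestroot$. The one genuine difference is that the paper adds a general positive root $a$ at each step (extracted from a reordering of a decomposition of $\highestroot - \gamma$ via Proposition 19 Ch VI \S1 no.~6 \cite{bourbakiII}), whereas you insist on adding a \emph{single simple} root. That is a valid and arguably cleaner framing, but the argument you sketch for the key step does not close.

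Specifically, from $(\highestroot - \gamma \mid \alpha) > 0$ you conclude $(\gamma \mid \alpha) < (\highestroot \mid \alpha)$, but this gives no control on the sign of $(\gamma \mid \alpha)$, which is what is needed (via the root-string criterion) to conclude $\gamma + \alpha \in \roots$; e.g.\ $(\highestroot \mid \alpha)$ may be strictly positive, leaving $(\gamma \mid \alpha) \geq 0$ perfectly possible. Your hedge --- ``a short case check\ldots one gets either $\gamma + \alpha \in \roots$ directly or deduces it after replacing $\alpha$'' --- doesn't say what the case check is, and I don't see a way to repair this particular route. The clean argument for the lemma you want (``every $\gamma \in \positiveroots$ with $\gamma \neq \highestroot$ admits $\alpha \in \simpleroots$ with $\gamma + \alpha \in \roots$'') is by contradiction: if $\gamma + \alpha \notin \roots$ for every $\alpha \in \simpleroots$ then, since $\gamma \neq \pm \alpha$, the root-string through $\gamma$ in the $\alpha$-direction has $p = 0$, so $\langle \gamma, \alpha^{\vee} \rangle = q \geq 0$; hence $\gamma$ is dominant, but in an irreducible root system $\highestroot$ is the unique dominant root, contradicting $\gamma \neq \highestroot$. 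With that lemma in hand your chain argument works. One further small slip worth fixing: $1 + \alpha_R \in \{-1,0,1\}$ together with $\alpha_R \in \{-1,0,1\}$ only forces $\alpha_R \in \{-1,0\}$, not $\alpha_R = 0$; you need to also use that $\alpha$ is a \emph{positive} (indeed simple) root, so $\alpha_R \geq 0$, which then forces $\alpha_R = 0$ (equivalently, observe directly that $\alpha \neq R$ and $\alpha \neq S$, since $\alpha = R$ would give $(\gamma + \alpha)_R = 2$ contradicting Corollary \ref{CcoeffsRSonly01}).
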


\begin{proof}
This follows by Induction on $ \height ( \gamma ) $, with Base Case $ \gamma = \highestroot $. For the Base Case, there is nothing to prove. Now, suppose $ \alpha_1, \ldots, \alpha_n \in \simpleroots $ (not necessarily distinct, $ n \geq 1 $) are such that $ \highestroot - \gamma = \alpha_1 + \cdots + \alpha_n $. Necessarily, $ \alpha_i \in \flippingset_{0,0} $ for all $ i $. By Proposition 19 Ch VI \S1 no. 6 \cite{bourbakiII}, it is possible to reorder the list $ \gamma, \alpha_1, \ldots, \alpha_n $ so that each partial sum is in $ \roots $. Fix such an ordering $ \beta_0, \beta_1, \ldots, \beta_n $. There are two cases: either $ \beta_0 = \gamma $ or $ \beta_0 \neq \gamma $. In the former case, $ \beta_0 + \beta_1 \in \roots $ means that there is $ i $ such that $ \gamma + \alpha_i \in \roots $. In the latter case, there is non-empty $ I \subset \{ 1, \ldots, n \} $ such that $ \sum_{i \in I} \alpha_i \in \roots $ and $ \sum_{i \in I} \alpha_i + \gamma \in \roots $. In either case, there is $ a \in \positiveroots $ such that $ \gamma^{\prime} := \gamma + a \in \roots $ and necessarily $ \height ( \gamma^{\prime} ) > \height ( \gamma ) $. Thus, the hypotheses of Proposition \ref{Pneighboringfiberssamesize} are satisfied for $ \gamma^{\prime}, \gamma $ and so the size of the fiber over $ \gamma $ is the same as the size of the fiber over $ \gamma^{\prime} $, which is the same as the size of the fiber over $ \highestroot $ by the Induction Hypothesis.
\end{proof}

Nearly identical arguments prove that all fibers of $ \pr_{0,1} $ are the same size and that all fibers of $ \pr_{1,0} $ are the same size. I sketch this argument next, starting with $ \pr_{0,1} $.

Suppose $ ( \alpha, \beta, \gamma ) \in \Sigma_{R,S} $ and $ \alpha^{\prime} \in \flippingset_{0,1} $ is such that $ \alpha^{\prime} - \alpha \in \positiveroots $. By an argument similar\footnote{It was never important that the two given elements $ \gamma, \gamma^{\prime} $ belonged to $ \flippingset_{1,1} $; it was important only that they belonged to the same $ \flippingset_{i,j} $ different from $ \flippingset_{0,0} $.} to Lemma \ref{Linduction}(\ref{Linductionhighfibertolowfiber}), there are canonical $ \beta^{\prime} \in \flippingset_{1,0} $ and $ \gamma^{\prime} \in \flippingset_{1,1} $ such that $ ( \alpha^{\prime}, \beta^{\prime}, \gamma^{\prime} ) \in \Sigma_{R,S} $. If the fibers of $ \pr_{0,1} $ over $ \alpha $ and $ \alpha^{\prime} $ are denoted $ \mathfrak{f} $ and $ \mathfrak{f}^{\prime} $ then this yields a function $ \mathfrak{f} \rightarrow \mathfrak{f}^{\prime} $ by $ ( \alpha, \beta, \gamma ) \mapsto ( \alpha^{\prime}, \beta^{\prime}, \gamma^{\prime} ) $. An argument similar to Lemma \ref{Linduction}(\ref{Linductionlowfibertohighfiber}) produces a function $ \mathfrak{f}^{\prime} \rightarrow \mathfrak{f} $. An argument similar to Proposition \ref{Pneighboringfiberssamesize} proves that these two functions are inverse, hence that $ \mathfrak{f}^{\prime} $ and $ \mathfrak{f} $ are the same size. For arbitrary $ \alpha \in \flippingset_{0,1} $, an Induction similar to that in Corollary \ref{Cfibersinbijection} shows that the fiber of $ \pr_{0,1} $ over any $ \alpha $ is the same size as the fiber of $ \pr_{0,1} $ over $ S $.

A nearly identical argument to that just given proves that all fibers of $ \pr_{1,0} $ are the same size.

\begin{remark}
The fact that $ \flippingset_{0,1}, \flippingset_{1,0}, \flippingset_{1,1} $ have extremal elements $ S, R, \highestroot $ is valuable precisely for inductions like that in the proof of Corollary \ref{Cfibersinbijection}.
\end{remark}

\section{Dependence Relations and Characters} \label{Scharacters}

Let $ D $ be a $ \Z $-linear combination of elements of $ \roots $; formally, 
\begin{equation*}
D = \{ ( m_1, a_1 ), \ldots, ( m_k, a_k ) \}
\end{equation*}
for distinct $ a_i \in \roots $ and various $ m_i \in \Z $. Given such a $ D $, one may use the scalars $ c ( n, a ) \in \field^{\times} $ from \S\ref{Scanonicalrepresentatives} to define a function 
\begin{align*}
c_D : N_G ( T ) ( \field ) &\longrightarrow \field^{\times} \\
n &\longmapsto c ( n, a_1 )^{m_1} \cdots c ( n, a_k )^{m_k}
\end{align*}

In such generality, this $ c_D $ is not a very good function and probably not worth considering anyway. However, with additional hypotheses that are satisfied in practice, an important function with good properties is gained:

\begin{defn}
Let $ D $ be as above. $ w \in \finiteweylgroup $ is said to ``fix'' $ D $ iff $ w $ stabilizes $ \{ a_1, \ldots, a_k \} $ and $ m_i = m_j $ whenever $ w ( a_i ) = a_j $. Such a $ D $ is called a ``dependence relation'' iff $ m_1 a_1 + \cdots + m_k a_k = 0 $.
\end{defn}
Clearly, the set of elements fixing a given $ D $ is a subgroup.

\begin{examplenum} \label{EXhighestrootrelationexample}
Let $ D $ be the relation $ m_0 \alpha_0 + m_1 \alpha_1 + \cdots + m_{\ell} \alpha_{\ell} = 0 $ for $ \simpleroots = \{ \alpha_1, \ldots, \alpha_{\ell} \} $, $ \alpha_0 := - \highestroot $, $ m_0 := 1 $, $ \highestroot = m_1 \alpha_1 + \cdots + m_{\ell} \alpha_{\ell} $. Since $ \prW ( \Omega ) \subset \finiteweylgroup $ is the subgroup permuting $ \{ \alpha_0, \alpha_1, \ldots, \alpha_{\ell} \} $, Lemma \ref{Lmultiplicitiespreserved} implies that $ \prW ( \Omega ) $ is the subgroup fixing $ D $. More generally, if $ D $ is a \emph{minimal} dependence relation and $ w \in \finiteweylgroup $ permutes the roots appearing in $ D $ then necessarily $ w $ fixes $ D $.
\end{examplenum}

\begin{lemma}[Restricts to Character]
Let $ D $ be a dependence relation and let $ \Gamma \subset \finiteweylgroup $ be the subgroup fixing $ D $. \textbf{Assertion:} $ c_D $ factors through $ \finiteweylgroup $ (as merely a function) and the restriction $ c_D \vert_{\Gamma} : \Gamma \rightarrow \field^{\times} $ is a group homomorphism.
\end{lemma}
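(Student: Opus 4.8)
The plan is to decompose the claim into two parts: (1) the function $ c_D $ descends to a well-defined function on $ \finiteweylgroup $, and (2) its restriction to $ \Gamma $ is multiplicative. For (1), the obstruction is the dependence on the choice of representative $ n \in N_G(T)(\field) $ of a given $ w \in \finiteweylgroup $: any two such representatives differ by some $ t \in T(\field) $, so I must show $ c_D $ is trivial on $ T(\field) $. For a torus element $ t $, conjugation by $ t $ acts on $ U_a $ by the scalar $ a(t) $ in realization-coordinates, hence $ c(t, a) = a(t) $. Therefore
\begin{equation*}
c_D(t) = a_1(t)^{m_1} \cdots a_k(t)^{m_k} = (m_1 a_1 + \cdots + m_k a_k)(t) = 0(t) = 1,
\end{equation*}
using additive notation in $ \chargroup(T) $ and the fact that $ D $ is a dependence relation, i.e. $ \sum_i m_i a_i = 0 $. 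Here the crucial input is precisely the hypothesis $ \sum_i m_i a_i = 0 $; without it, only the cocycle property (next paragraph) survives. Thus $ c_D $ factors through $ N_G(T)(\field)/T(\field) = \finiteweylgroup $ as a function, and I will abuse notation and also write $ c_D $ for the induced function on $ \finiteweylgroup $.

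For (2), I first record the cocycle-type behavior of the scalars $ c(n, a) $ under multiplication in $ N_G(T)(\field) $. If $ n, n' \in N_G(T)(\field) $ represent $ w, w' \in \finiteweylgroup $, then conjugation by $ n n' $ on $ U_a $ equals conjugation by $ n $ of the conjugate by $ n' $; tracking through realization-coordinates $ U_a \to U_{w'(a)} \to U_{w(w'(a))} $ gives
\begin{equation*}
c(n n', a) = c(n, w'(a)) \cdot c(n', a).
\end{equation*}
Now fix $ u, v \in \Gamma $ and compute $ c_D $ on a product of representatives, using that $ \canonicalsection $ or any chosen representatives multiply as above:
\begin{equation*}
c_D(uv) = \prod_{i=1}^{k} c(n_u n_v, a_i)^{m_i} = \prod_{i=1}^{k} c(n_u, v(a_i))^{m_i} \cdot \prod_{i=1}^{k} c(n_v, a_i)^{m_i}.
\end{equation*}
The second factor is exactly $ c_D(v) $. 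For the first factor, I use that $ v \in \Gamma $ fixes $ D $: the assignment $ a_i \mapsto v(a_i) $ is a permutation of $ \{a_1, \ldots, a_k\} $ which preserves the multiplicities, so reindexing the product $ \prod_i c(n_u, v(a_i))^{m_i} $ by setting $ a_j = v(a_i) $ (and using $ m_j = m_i $) yields $ \prod_j c(n_u, a_j)^{m_j} = c_D(u) $. Hence $ c_D(uv) = c_D(u) \cdot c_D(v) $, and since (1) already guarantees $ c_D $ is well-defined on $ \finiteweylgroup $ independent of representatives, the restriction $ c_D|_\Gamma : \Gamma \to \field^\times $ is a group homomorphism.

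I expect the main (modest) obstacle to be bookkeeping in the reindexing step of part (2): one must be careful that $ v $ stabilizing $ \{a_1, \ldots, a_k\} $ together with the multiplicity-matching condition $ m_i = m_j $ whenever $ v(a_i) = a_j $ — which is exactly the definition of "$ v $ fixes $ D $" — is what licenses pulling the permutation $ v $ through the product while leaving the exponents untouched. It is also worth noting explicitly that the multiplicativity computation only needs $ v \in \Gamma $ (to reindex) and that $ c_D $ is representative-independent (from part (1), which in turn needs $ D $ a dependence relation); one does not separately need $ u \in \Gamma $, though of course $ \Gamma $ being a subgroup means $ uv \in \Gamma $ and everything stays inside the domain where the homomorphism property is asserted.
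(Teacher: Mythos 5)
Your proof is correct and takes essentially the same two-step route as the paper: show $c_D(nt)=c_D(n)$ using that $D$ is a dependence relation, then use the cocycle identity $c(nn',a)=c(n,w'(a))\cdot c(n',a)$ together with the ``fixes $D$'' hypothesis to reindex the product and obtain multiplicativity on $\Gamma$. The only cosmetic difference is that you isolate the intermediate claim $c_D|_{T(\field)}=1$ and then implicitly invoke the cocycle identity (with $w'$ trivial) to pass from it to well-definedness, whereas the paper simply expands $c_D(nt)$ directly; the content is identical.
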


\begin{proof}
If $ n \in N_G(T)(\field) $ and $ t \in T(\field) $ then $ c_D ( n t ) = c ( n t, a_1 )^{m_1} \cdots c ( n t, a_k )^{m_k} = c ( n, a_1 )^{m_1} \cdot a_1 ( t )^{m_1} \cdots c ( n, a_k )^{m_k} \cdot a_k ( t )^{m_k} = c_D ( n ) \cdot a_1 ( t )^{m_1} \cdots a_k ( t )^{m_k} $. Since $ D $ is a dependence relation, $ a_1 ( t )^{m_1} \cdots a_k ( t )^{m_k} = 1 $ and the first claim follows. Thus, given $ n \in N_G(T)(\field) $ representing $ w \in \finiteweylgroup $, denote by $ c_D ( w ) $ this common element. For the second claim, note that $ c ( w \cdot w^{\prime}, a ) = c ( w, a ) \cdot c ( w^{\prime}, w ( a ) ) $ for all $ w, w^{\prime} \in \finiteweylgroup $ and $ a \in \roots $. If $ w $ fixes $ D $ and $ w ( a_i ) = a_j $ then $ m_i = m_j $ and so $ c ( w \cdot w^{\prime}, a_i )^{m_i} = c ( w, a_i )^{m_i} \cdot c ( w^{\prime}, a_j )^{m_i} = c ( w, a_i )^{m_i} \cdot c ( w^{\prime}, a_j )^{m_j} $ and taking the product for all $ i $ yields the second claim.
\end{proof}

\begin{notation}
From now on, if $ D $ is a dependence relation then $ c_D $ refers to the character after restriction to the subgroup of $ \finiteweylgroup $ that fixes $ D $. The dependence relation from Example \ref{EXhighestrootrelationexample} will be called the ``Highest Root Relation''.
\end{notation}

Here is a thoroughly underwhelming example:
\begin{example}
If $ a \in \roots $ then it is standard\footnote{Lemma 9.2.2(ii) \cite{springer}} that $ c ( \canonicalsection ( s ), a ) \cdot c ( \canonicalsection ( s ), -a ) = 1 $ for any $ s \in \simpleroots $. Since $ c ( n n^{\prime}, a ) = c ( n, w^{\prime} ( a ) ) \cdot c ( n^{\prime}, a ) $ for $ n, n^{\prime} \in N_G(T)(\field) $ representing $ w, w^{\prime} \in \finiteweylgroup $, applying the previous fact repeatedly yields that if $ D $ is the dependence relation $ - a + a = 0 $ then $ c_D $ is trivial.
\end{example}

A more substantive example, which will be critical for the application to simple supercuspidals, is the following:
\begin{prop} \label{Ptrivialcharacter}
If $ D $ is the Highest Root Relation then $ c_D : \prW ( \Omega ) \rightarrow \field^{\times} $ is the trivial character.
\end{prop}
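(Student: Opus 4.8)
The plan is to reduce the statement to a single numerical identity about the torus element $\canonicalsection(\sigma)^{|\sigma|}$, and to extract that identity from the Summation Formulas.

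First I would make the standard reductions. The root subgroups $U_a$, the chosen realization, the scalars $c(n,a)$ and the pairing $\roots\times\roots^{\vee}\to\Z$ all transport unchanged to $G_{\ad}$, and $\prW(\Omega)\subseteq\prW(\Omega_{\ad})$ because the map $\Omega\to\Omega_{\ad}$ is compatible with the projections to $\finiteweylgroup$; so it suffices to prove $c_D$ trivial on $\prW(\Omega_{\ad})$, and I may assume $G=G_{\ad}$. Recall that in this section the realization is a Chevalley realization, so every $c(n,a)=\pm 1$ and $c(n,a)=1$ whenever $a$ and its image are both simple. Fix $\sigma\in\prW(\Omega_{\ad})$; if $\sigma=1$ there is nothing to do, so assume $\sigma\neq 1$, so that $\sigma$ permutes $\simpleaffinegradients=\{\alpha_0,\dots,\alpha_\ell\}$ with $\alpha_0=-\highestroot$ and, by Lemma \ref{Lordersameorbitmaximal}, the $\sigma$-orbit of $\alpha_0$ has size $|\sigma|\geq 2$. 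Put $R:=\sigma^{-1}(\alpha_0)$, a simple root with $\sigma(R)=\alpha_0$ and, by Lemma \ref{Lmultiplicitiespreserved}, multiplicity $m_R=m_0=1$. In the product $c_D(\sigma)=\prod_{i=0}^{\ell}c(\canonicalsection(\sigma),\alpha_i)^{m_i}$ every factor with $i\geq 1$ and $\alpha_i\neq R$ is trivial (then $\alpha_i$ and $\sigma(\alpha_i)$ are both simple), so $c_D(\sigma)=c(\canonicalsection(\sigma),-\highestroot)\cdot c(\canonicalsection(\sigma),R)$.

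Next I would recognize this two‑term product as a value of the character $-\highestroot$. Iterating the cocycle identity $c(nn',a)=c(n,w'(a))\,c(n',a)$ gives $c(\canonicalsection(\sigma)^{|\sigma|},-\highestroot)=\prod_{j=0}^{|\sigma|-1}c(\canonicalsection(\sigma),\sigma^{j}(\alpha_0))$, a product over the $\sigma$-orbit of $\alpha_0$; by the Chevalley property only the factors coming from the steps $\alpha_0\to\sigma(\alpha_0)$ and $R\to\alpha_0$ survive, so this product equals $c(\canonicalsection(\sigma),-\highestroot)\cdot c(\canonicalsection(\sigma),R)=c_D(\sigma)$. Since $\sigma^{|\sigma|}=1$ the element $\canonicalsection(\sigma)^{|\sigma|}$ lies in $T(\field)$, and conjugation by a torus element acts on $U_a$ through $a$; hence $c_D(\sigma)=(-\highestroot)(\canonicalsection(\sigma)^{|\sigma|})$, and everything is reduced to showing this is $1$. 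If $|\sigma|$ is odd this is immediate, since $c_D$ is $\{\pm 1\}$-valued and $c_D(\sigma)=c_D(\sigma)^{|\sigma|}=c_D(\sigma^{|\sigma|})=1$. So assume $|\sigma|$ is even; then $h$ is even, because the only irreducible types with odd Coxeter number are $A_\ell$ with $\ell$ even, and there $\prW(\Omega_{\ad})\cong\Z/(\ell+1)$ has no element of even order. Iterating Proposition \ref{Pgeneralformulafortwococycle} writes $\canonicalsection(\sigma)^{|\sigma|}$ as an explicit product of elements $b^{\vee}(-1)$, so pairing with $-\highestroot$ turns $(-\highestroot)(\canonicalsection(\sigma)^{|\sigma|})$ into $(-1)$ raised to a sum of terms $\sum_{b}\langle\highestroot,b^{\vee}\rangle$ over flipping sets; each such term is evaluated by Proposition \ref{Pfirstformula} or Theorem \ref{Tmainformula}, using that $\sigma(\highestroot),\sigma^{2}(\highestroot),\dots$ are negatives of simple roots (so have $\height=-1$) together with the identities $a+b+c=h$, $a=c$, $F_\sigma(S)=-a$ established in the proof of Theorem \ref{Tmainformula}. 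For $|\sigma|\leq 4$ — the only even orders occurring outside type $A$, since type $D$ contributes $\Z/4$ or $\Z/2\times\Z/2$ and types $B,C,E_7$ contribute $\Z/2$ — this is a short computation giving $(-1)^{h}=1$. For type $A_\ell$, where $|\sigma|$ may be any divisor of $\ell+1$, it is cleanest to argue directly: writing $\sigma$ through $\finiteweylgroup=S_{\ell+1}$ and $\canonicalsection$ as a signed permutation matrix in $\SL(\ell+1)$, one checks that $\canonicalsection(\sigma)^{|\sigma|}$ is a scalar matrix, hence central in $\SL(\ell+1)$ and trivial in $\PGL(\ell+1)$, so $(-\highestroot)(\canonicalsection(\sigma)^{|\sigma|})=1$ again. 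Thus $c_D(\sigma)=1$ in every case.

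The hard part is the even‑order computation. The uniform route through the Summation Formulas is transparent only for $|\sigma|\leq 4$: iterating Proposition \ref{Pgeneralformulafortwococycle} for larger $|\sigma|$ brings in the sets $\flippingset(\sigma^{k},\sigma)$ with $k\geq 2$, which are not controlled by the two‑coefficient bookkeeping of \S\ref{SSformulasforalcovestabilizers}, and the matrix computation is what sidesteps this for the one infinite family (type $A$) where it is needed. An alternative that avoids the split is to prove the stronger assertion that $\canonicalsection(\sigma)^{|\sigma|}$ is trivial in $G_{\ad}(\field)$ for every such $\sigma$ — which at once gives $c_D(\sigma)=(-\highestroot)(1)=1$, and which the later construction of the section $\goodsectionext$ needs in any case.
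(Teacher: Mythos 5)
Your opening moves agree with the paper: the Chevalley realization together with Lemma \ref{Lmultiplicitiespreserved} collapses $c_D(\sigma)$ to the two factors $c(\canonicalsection(\sigma),R)\cdot c(\canonicalsection(\sigma),-\highestroot)$, and your identification of this with $(-\highestroot)(\canonicalsection(\sigma)^{|\sigma|})$ by iterating the cocycle identity around the $\sigma$-orbit of $-\highestroot$ is correct, as is the formal disposal of the odd-order case. But the even-order case is a genuine gap: you correctly diagnose that iterating Proposition \ref{Pgeneralformulafortwococycle} up to $\canonicalsection(\sigma)^{|\sigma|}$ brings in the sets $\flippingset(\sigma^{k},\sigma)$ for $k\geq 2$, which the bookkeeping of \S\ref{SSformulasforalcovestabilizers} does not control, and then you leave the work undone --- the ``short computation giving $(-1)^{h}$'' for $|\sigma|\leq 4$ is asserted but not carried out (already for $|\sigma|=4$ it requires $\flippingset(\sigma^{2},\sigma)$ and $\flippingset(\sigma^{3},\sigma)$, which the paper never analyzes), and the signed-permutation verification for type $A$ is likewise only sketched.

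The paper avoids the whole difficulty by stopping the iteration at the \emph{square}. The same cocycle identity, applied once, gives $c(\canonicalsection(\sigma),R)\cdot c(\canonicalsection(\sigma),-\highestroot) = c(\canonicalsection(\sigma)^{2},R)$ since $\sigma(R)=-\highestroot$. Proposition \ref{Pgeneralformulafortwococycle} factors $\canonicalsection(\sigma)^{2}$ as $\canonicalsection(\sigma^{2})\cdot\prod_{b\in\flippingset(\sigma)}b^{\vee}(-1)$; the contribution $c(\canonicalsection(\sigma^{2}),R)$ is $1$ because $R$ and $\sigma^{2}(R)$ are both simple, and what remains is $(-1)^{F_{\sigma}(R)}$. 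This is exactly the quantity that Corollary \ref{Csumeven} evaluates --- uniformly, treating $|\sigma|=2$ via Formula \#1 and $|\sigma|\geq 3$ via Formula \#2 --- so the proof closes in one line with no case split beyond what that corollary already absorbs. Evaluating the two-factor product against $R$ after a single squaring lands you on the diagonal flipping set $\flippingset(\sigma)$, which is precisely the one the Summation Formulas were built to handle; going all the way to $\canonicalsection(\sigma)^{|\sigma|}$ overshoots the machinery available.
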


\begin{proof}
Fix $ w \in \prW ( \Omega ) $. I may assume that the realization of $ \roots $ in $ G $ is a Chevalley Realization, in which case $ c ( \canonicalsection ( w ), a ) = 1 $ whenever $ a, w(a) \in \simpleroots $. By this and Lemma \ref{Lmultiplicitiespreserved}, $ c_D ( w ) = c ( \canonicalsection ( w ), R ) \cdot c ( \canonicalsection ( w ), - \highestroot ) $. Since $ R, w^2 ( R ) \in \simpleroots $, $ c ( \canonicalsection ( w^2 ), R ) = 1 $ and so this is the scalar by which $ \canonicalsection ( w )^2 $ acts on $ U_R $. On the other hand, Proposition \ref{Pgeneralformulafortwococycle} says that this scalar is $ ( -1 )^{F_w(R)} $. Corollary \ref{Csumeven} says that $ F_w(R) $ is even, so $ c_D ( w ) = 1 $ as desired.
\end{proof}

Here is a small example for which the character is non-trivial:
\begin{example}
Let $ G $ be any (almost-)simple group of type B2, with $ \simpleroots = \{ \alpha, \beta \} $ and $ \positiveroots = \{ \alpha, \beta, \alpha + \beta, 2 \alpha + \beta \} $. Abbreviate $ \gamma := \alpha + \beta $ and $ \highestroot := 2 \alpha + \beta $. Let $ s $ be the reflection corresponding to $ \beta $. Use the realization of $ \roots $ from \S33.4 \cite{humphreysAG} and let $ n $ be the canonical representative of $ s $. Let $ D $ be the dependence relation $ \alpha + \gamma - \highestroot = 0 $ and note that $ s $ fixes $ D $, since it exchanges $ \alpha, \gamma $ and fixes $ \highestroot $. However, $ c_D ( s ) = c ( n, \alpha ) \cdot c ( n, \gamma ) \cdot c ( n, \highestroot )^{-1} = -1 $, since the data in Proposition \S33.4 \cite{humphreysAG} says that $ c ( n, \alpha ) = 1 $, $ c ( n, \gamma ) = -1 $, $ c ( n, \highestroot ) = 1 $.
\end{example}

\begin{remark}
The question of whether or not $ c_D $ is trivial seems to be connected with the question of whether or not there are stable points in vector spaces coming from the Moy-Prasad filtrations at points in the Bruhat-Tits building. In more detail, choosing any point in the Bruhat-Tits building provides a certain $ \resfield $-vector space $ \mathbf{V} $ with an action by a certain $ \resfield $-group. There is a notion of ``stability'' relative to this action, and if the point in the building is chosen to be the barycenter of a facet, then $ \mathbf{V} $ may potentially contain stable vectors. This vector space $ \mathbf{V} $ has a natural basis corresponding to certain elements of $ \roots $, these basis roots are permuted (more or less) by certain elements $ w $ of $ \finiteweylgroup $ which stabilize the facet, and various dependence relations $ D $ hold among those basis roots. For some questions, it is necessary to ask if $ c_D ( w ) = 1 $ for $ w $ fixing one of these $ D $; one example of such a question, with an affirmative answer, is given below as Theorem \ref{Tfixingrepresentative}. It seems that triviality of these various $ c_D $ is equivalent to, or at least implied by, the existence of stable vectors in $ \mathbf{V} $. This more general question will be pursued in subsequent work.
\end{remark}

\section{Application: ``Simple Supercuspidals''} \label{Saffgencharapp}

\subsection{Additional notation} \label{SSadditionalnotation}

Let $ \integers \subset \field $ be the valuation ring. Fix, once and for all, a uniformizer $ \uniformizer \in \integers $ and denote by $ \resfield \defeq \integers / ( \uniformizer ) $ the residue field.

Since $ G $ is assumed $ \field $-split, the origin of the apartment for $ T_{\ad} $ is hyperspecial and the connected parahoric $ \integers $-group $ \integralmodel{G} $ attached by Bruhat-Tits to this origin (a Chevalley group scheme) has a good special fiber: 
\begin{center}
\emph{The affine algebraic $ \resfield $-group $ \specialfiber{G} \defeq \integralmodel{G} \otimes_{\integers} \resfield $ is connected \\and reductive (cf. \S1.10.2 and \S3.8.1 of \cite{tits}).}
\end{center}
The $ \integers $-model $ \integralmodel{G} $ contains an $ \integers $-model $ \integralmodel{T} $ of the torus $ T $, its image $ \specialfiber{T} $ in $ \specialfiber{G} $ is a split maximal $ \resfield $-torus, and the root system of $ \specialfiber{G} $ relative to $ \specialfiber{T} $ may be identified with $ \roots $ (cf. \S3.5 and \S3.5.1 of \cite{tits}).

\preamble{Further references and details for the following material can be found in \cite{rorobern}.}

Let $ \kappa : G ( \field ) \twoheadrightarrow \kottwitzhomcodomain $ be the Kottwitz homomorphism. Let $ \kappa_{\ad} : G_{\ad} ( \field ) \twoheadrightarrow \kottwitzhomcodomain_{\ad} $ be the Kottwitz homomorphism for $ G_{\ad} $, and recall that $ \kottwitzhomcodomain_{\ad} $ is always a finite abelian group. There is a canonical homomorphism $ \kottwitzhomcodomain \rightarrow \kottwitzhomcodomain_{\ad} $ which is compatible, via $ \kappa $ and $ \kappa_{\ad} $, with the quotient map $ G ( \field ) \rightarrow G_{\ad} ( \field ) $ in the obvious way. The image of $ \omega $ in $ \kottwitzhomcodomain_{\ad} $ is denoted $ \omega_{\ad} $.

Set $ G_1 \defeq \kernel ( \kappa ) $ and let $ G^1 $ be the kernel of the composition $ G ( K ) \stackrel{\kappa}{\longrightarrow} \kottwitzhomcodomain \rightarrow \kottwitzhomcodomain / \kottwitzhomcodomain_{\tor} $, so that $ G_1 \subset G^1 $. It happens that $ \compacttorus = G_1 \cap T ( \field ) $. Also set $ \compactcenter \defeq G_1 \cap Z ( G ) ( \field ) $, and note that $ \compactcenter $ may be disconnected. Denote by $ \iwahorisubgroup \subset G ( \field ) $ the Iwahori subgroup associated to the alcove $ \alcove $, i.e. $ \iwahorisubgroup \defeq G_1 \cap \{ g \in G ( \field ) \suchthat g \cdot x = x \text{ for all } x \in \alcove \} $. Let $ \prounipotentiwahori \subset \iwahorisubgroup $ be the pro-unipotent radical. Set $ \onemodptorus \defeq T ( \field ) \cap \prounipotentiwahori $.

Since $ \compacttorus \subset G_1 $, the restriction of $ \kappa $ to $ N_G(T)(\field) $ descends to a group homomorphism $ W \rightarrow \kottwitzhomcodomain $. It is a fact that the restriction of this group homomorphism to $ \Omega $ is an isomorphism onto $ \kottwitzhomcodomain $. A useful corollary is that, after identifying $ \Omega $ with $ \kottwitzhomcodomain $ in this way, the restriction $ F \vert_{\Omega} $ of any section $ F $ of the canonical map $ N_G(T)(\field) \rightarrow W $ is a section of $ \kappa $.

\subsection{Generalities on affine generic characters}

\preamble{For more details of the material in this subsection, see \cite{GR} and \cite{RY}.}

For any point in the Bruhat-Tits building, which is taken to be $ \barycenter $ here, there is a Moy-Prasad Filtration $ J_0 \supset J_1 \supset J_2 \supset \cdots $. I omit many details of this filtration, but record that
\begin{itemize}
\item $ J_0 $ is the Iwahori subgroup $ \iwahorisubgroup $.
\item $ J_1 $ is the pro-unipotent radical $ \prounipotentiwahori $ of $ \iwahorisubgroup $.
\item $ J_0 / J_1 $ is the $ \resfield $-points of a connected affine algebraic $ \resfield $-group.
\item $ \mathbf{V}_{\barycenter} \defeq J_1 / J_2 $ is a finite-dimensional $ \resfield $-vector space and, via conjugation, admits an algebraic representation by the above $ \resfield $-group.
\end{itemize}

Conjugation of $ \prounipotentiwahori $ by $ \compacttorus $ induces a diagonalizable representation of $ \mathbf{V}_{\barycenter} $, and 
\begin{equation*}
\mathbf{V}_{\barycenter} = \bigoplus_{\alpha \in \simpleaffinegradients} \mathbf{V}_{\barycenter} ( \alpha )
\end{equation*}
where $ t \in \compacttorus $ acts on the $ 1 $-dimensional subspace $ \mathbf{V}_{\barycenter} ( \alpha ) $ by $ \alpha ( t ) $.

Let
\begin{equation*}
\lambda : \mathbf{V}_{\barycenter} \longrightarrow \resfield
\end{equation*}
be a stable $ \resfield $-linear functional, in the sense of \cite{RY}. Since $ \barycenter $ is the barycenter of an alcove, this notion reduces essentially to the notion of ``affine generic character'' of \cite{GR}:

\begin{center}
\emph{For each $ \alpha \in \simpleaffinegradients $, the restriction of $ \lambda $ to the line $ \mathbf{V}_{\barycenter} ( \alpha ) $ is \emph{non-zero}.}
\end{center}

\begin{notation}
$ G_{\barycenter} \defeq \{ g \in G ( \field ) \suchthat g \cdot \barycenter = \barycenter \} $
\end{notation}

Consider $ \lambda $ also to be a group homomorphism $ \prounipotentiwahori \rightarrow \resfield $ by pullback. If $ g \in G_{\barycenter} $ then conjugation by $ g $ induces an operator on $ \mathbf{V}_{\barycenter} $, still denoted by $ v \mapsto g \cdot v \cdot g^{-1} $.

\begin{defn}
$ g \cdot \lambda $ is the $ \resfield $-linear functional $ v \mapsto \lambda ( g \cdot v \cdot g^{-1} ) $ and 
\begin{equation*}
\fixer_{G_{\barycenter}} ( \lambda ) \defeq \{ g \in G_{\barycenter} \suchthat g \cdot \lambda = \lambda \}
\end{equation*}
\end{defn}

Note that $ Z ( G ) ( \field ) \subset \fixer_{G_{\barycenter}} ( \lambda ) $. Here is an easy description of $ G_{\barycenter} $ which is surely well-known and will be used later (for Proposition \ref{Pproductdecompofcharacterfixer}):

\begin{lemma} \label{Lbasicdecompofpointfixer}
If $ f : \kottwitzhomcodomain \hookrightarrow G ( \field ) $ is an arbitrary section of $ \kappa $ then $ G_{\barycenter} = f ( \kottwitzhomcodomain ) \cdot \iwahorisubgroup $.
\end{lemma}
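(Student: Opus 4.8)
The plan is to prove the equality $G_{\barycenter} = f(\kottwitzhomcodomain)\cdot\iwahorisubgroup$ by the two inclusions, the only non-formal ingredient being the standard Bruhat--Tits identity $G_1 \cap G_{\barycenter} = \iwahorisubgroup$. I would first record two preliminary observations. First, $\iwahorisubgroup \subseteq G_{\barycenter}$: by its definition every element of $\iwahorisubgroup$ fixes $\alcove$ pointwise, and $\barycenter \in \alcove$. Second, $f$ takes its values in $G_{\barycenter}$: the sections of $\kappa$ at issue arise by restriction to $\Omega$ of a section of $N_G(T)(\field)\to W$, so each $f(\omega)$ lies in $N_G(T)(\field)$ and acts on $\building_{\ad}$ through its image in $\Omega \subseteq W$; since $\Omega$ stabilizes $\alcove$ it fixes the barycenter $\barycenter$, whence $f(\omega) \in G_{\barycenter}$ (and in any event one may always arrange this, since $\Omega \directedisom \kottwitzhomcodomain$ and $\Omega$ fixes $\barycenter$).

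Granting these, the inclusion ``$\supseteq$'' is immediate: $f(\kottwitzhomcodomain)\cdot\iwahorisubgroup$ is then a product of two subsets of the \emph{subgroup} $G_{\barycenter}$, hence is contained in $G_{\barycenter}$. For ``$\subseteq$'', I would fix $g \in G_{\barycenter}$ and set $\omega := \kappa(g)$. Then $\kappa(f(\omega)^{-1}g) = 1_{\kottwitzhomcodomain}$, so $f(\omega)^{-1}g \in \kernel(\kappa) = G_1$; and by the second observation $f(\omega)^{-1}g \in G_{\barycenter}$ as well. Thus it suffices to prove $G_1 \cap G_{\barycenter} \subseteq \iwahorisubgroup$, for then $f(\omega)^{-1}g \in \iwahorisubgroup$ and therefore $g \in f(\omega)\cdot\iwahorisubgroup \subseteq f(\kottwitzhomcodomain)\cdot\iwahorisubgroup$.

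To prove $G_1 \cap G_{\barycenter} \subseteq \iwahorisubgroup$, take $g \in G_1$ with $g\cdot\barycenter = \barycenter$. Since $\barycenter$ lies in the interior of the top-dimensional facet $\alcove$ of $\building_{\ad}$, the element $g$ (acting through $G_{\ad}(\field)$) stabilizes $\alcove$ setwise. Moreover $g \in G_1$ forces the image of $g$ in $G_{\ad}(\field)$ to lie in $\kernel(\kappa_{\ad})$, since $\kappa(g) = 1_{\kottwitzhomcodomain}$ maps to $1_{\kottwitzhomcodomain_{\ad}}$ under the canonical homomorphism $\kottwitzhomcodomain \to \kottwitzhomcodomain_{\ad}$ compatible with $G(\field)\to G_{\ad}(\field)$; hence $g$ acts on $\building_{\ad}$ in a type-preserving fashion. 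A type-preserving simplicial automorphism that stabilizes a top-dimensional facet setwise fixes that facet pointwise. Therefore $g$ fixes $\alcove$ pointwise, i.e.\ $g \in \iwahorisubgroup$ by the definition of $\iwahorisubgroup$ -- this is simply the familiar statement that the Iwahori (connected) stabilizer of an alcove coincides with the stabilizer of its barycenter.

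I expect the step $G_1 \cap G_{\barycenter} \subseteq \iwahorisubgroup$ -- specifically, that an element of $G_1$ fixing $\barycenter$ fixes all of $\alcove$ -- to be the only real obstacle; it is pure Bruhat--Tits theory, and I would cite it from the standard references (e.g.\ \cite{tits}) rather than reprove it, noting that it can also be extracted from facts already at hand, namely that $\kappa_{\ad}$ cuts out the type-preserving subgroup of $G_{\ad}(\field)$ and that $W_{\aff}$ acts simply transitively on the alcoves. The rest of the argument is formal. As a coda -- not needed for the stated equality -- since $\iwahorisubgroup = \kernel(\kappa|_{G_{\barycenter}})$, the union $f(\kottwitzhomcodomain)\cdot\iwahorisubgroup$ is in fact disjoint and $f(\kottwitzhomcodomain)$ is a set of coset representatives for $\iwahorisubgroup$ in $G_{\barycenter}$.
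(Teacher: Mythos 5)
Your proof is correct and follows essentially the same route as the paper: both decompose an arbitrary $g \in G_{\barycenter}$ using that $f(\kottwitzhomcodomain)$ is a transversal for $G_1$-cosets, observe that $f$ takes values in $G_{\barycenter}$, and reduce to the identity $G_1 \cap G_{\barycenter} = \iwahorisubgroup$. The only difference is that the paper simply cites this last identity as ``a fact'', whereas you additionally sketch its proof (type-preserving stabilizer of a top-dimensional facet fixes it pointwise), which is a reasonable and correct elaboration.
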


Recall from \S\ref{SSadditionalnotation} that one may identify $ \kottwitzhomcodomain \directedisom \Omega $ and that if $ F $ is any section of the canonical map $ N_G(T)(\field) \rightarrow W $ then $ F \vert_{\Omega} $ is identified with a section of $ \kappa $.

\begin{proof}
Since $ \kappa : G / G_1 \directedisom \kottwitzhomcodomain $ and $ f $ is a section of $ \kappa $, $ f ( \kottwitzhomcodomain ) $ is a system of representatives for all $ G_1 $-cosets. Thus, for any $ g \in G_{\barycenter} $, there exists $ \omega \in \kottwitzhomcodomain $ and $ x \in G_1 $ such that $ g = f ( \omega ) \cdot x $. Since $ f ( \omega ) \in G_{\barycenter} $, $ x \in G_{\barycenter} $ also. Together, $ x \in G_1 \cap G_{\barycenter} $. It is a fact that $ \iwahorisubgroup = G_1 \cap G_{\barycenter} $ and so $ G_{\barycenter} \subset f ( \kottwitzhomcodomain ) \cdot \iwahorisubgroup $. The reverse inclusion is trivial.
\end{proof}

\subsection{Non-trivial elements in the fixer: adjoint case} \label{SSgoodrepresentativesadjointcase}

\begin{notation}
$ G_{\sss} \defeq G / Z ( G )^{\circ} $, the ``semisimplification'' of $ G $.
\end{notation}

\begin{theorem} \label{Tfixingrepresentative}
If $ G_{\sss} $ is adjoint\footnote{Note that this is equivalent to the hypothesis that $ Z ( G ) $ is connected.} then there exists a section $ \goodsectionext $ of the canonical map $ N_G(T)(\field) \rightarrow W $ such that $ \goodsectionext ( \Omega ) \subset \fixer_{G_{\barycenter}} ( \lambda ) $.
\end{theorem}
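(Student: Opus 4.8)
The plan is to reduce the statement to the following claim, using that a section of $N_G(T)(\field)\to W$ is just a choice of one representative in each fibre: for every $\omega\in\Omega$ there is a representative $n\in N_G(T)(\field)$ of $\omega$ with $n\cdot\lambda=\lambda$. Having made such choices over $\Omega$ one extends the section arbitrarily over the rest of $W$. Since $\Omega$ fixes $\barycenter$ and $\compacttorus$ acts trivially on the apartment, every such $n$ lies in $G_{\barycenter}$ and therefore operates on $\mathbf{V}_{\barycenter}$, so the claim makes sense.

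First I would record how a representative $n$ of $\omega$ acts on $\mathbf{V}_{\barycenter}=\bigoplus_{\alpha\in\simpleaffinegradients}\mathbf{V}_{\barycenter}(\alpha)$. Writing $\sigma\defeq\prW(\omega)$, the element $\omega$ permutes $\simpleaffineroots$, and by the compatibility recalled in \S\ref{Snotation} between the $\Omega$-action on the affine Dynkin diagram and the $\prW$-permutation of $\simpleaffinegradients$, conjugation by $n$ carries $\mathbf{V}_{\barycenter}(\alpha)$ isomorphically onto $\mathbf{V}_{\barycenter}(\sigma(\alpha))$. Fixing once and for all a basis vector in each line, this isomorphism is multiplication by a scalar $d(n,\alpha)\in\resfield^{\times}$, and one checks from the definitions that: (i) $d(nt,\alpha)=d(n,\alpha)\cdot\overline{\alpha(t)}$ for $t\in\compacttorus$ (bar denoting reduction modulo $\uniformizer$); and (ii) for the fixed representative $n_{0}$ built from $\canonicalsection(\sigma)$ together with the translation part of $\omega$, one has $\prod_{\alpha\in\simpleaffinegradients}d(n_{0},\alpha)^{m_{\alpha}}=c_{D}(\sigma)$, where the $m_{\alpha}$ are the multiplicities of Lemma \ref{Lmultiplicitiespreserved} (with $m_{-\highestroot}\defeq 1$) and $D$ is the Highest Root Relation. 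Establishing (ii) — tracking the single $\uniformizer$-shift introduced by the affine simple root $1-\highestroot$ and matching the residual signs with the scalars $c(\canonicalsection(\sigma),-)$ — is the main technical point.

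Next I would translate the condition $n\cdot\lambda=\lambda$ into scalars. Let $\lambda_{\alpha}\in\resfield^{\times}$ be the value of $\lambda$ on the chosen basis vector of $\mathbf{V}_{\barycenter}(\alpha)$, which is nonzero because $\lambda$ is, in particular, an affine generic character. Then $n\cdot\lambda=\lambda$ holds iff $d(n,\alpha)=\lambda_{\alpha}/\lambda_{\sigma(\alpha)}$ for every $\alpha\in\simpleaffinegradients$. By (i), as $n$ runs over the representatives of $\omega$ the tuple $(d(n,\alpha))_{\alpha}$ runs over the coset $(d(n_{0},\alpha))_{\alpha}\cdot\nu(\compacttorus)$, where $\nu\colon\compacttorus\to(\resfield^{\times})^{\simpleaffinegradients}$ sends $t\mapsto(\overline{\alpha(t)})_{\alpha}$. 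The hypothesis that $G_{\sss}$ is adjoint enters precisely here: then $Z(G)$ is connected, so the isogeny $T\to T_{\ad}$ has connected kernel and, $\resfield$ being finite, Lang's theorem gives surjectivity of $\specialfiber{T}(\resfield)\to\specialfiber{T_{\ad}}(\resfield)$; composing with the isomorphism $\specialfiber{T_{\ad}}(\resfield)\cong(\resfield^{\times})^{\simpleroots}$ arising from $\chargroup(T_{\ad})=Q$ (with $\simpleroots$ a $\Z$-basis) shows that the image of $\nu$ is exactly the kernel of $(c_{\alpha})_{\alpha}\mapsto\prod_{\alpha}c_{\alpha}^{m_{\alpha}}$, the relation $\sum_{\alpha}m_{\alpha}\alpha=0$.

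Finally I would assemble the pieces. A solution $t\in\compacttorus$ exists iff the target tuple $\bigl(\lambda_{\alpha}/(\lambda_{\sigma(\alpha)}d(n_{0},\alpha))\bigr)_{\alpha}$ lies in that kernel, i.e.\ iff $\prod_{\alpha}\bigl(\lambda_{\alpha}/\lambda_{\sigma(\alpha)}\bigr)^{m_{\alpha}}=\prod_{\alpha}d(n_{0},\alpha)^{m_{\alpha}}$. The left-hand product equals $1$ because $\sigma$ permutes $\simpleaffinegradients$ and preserves the multiplicities $m_{\alpha}$ (Lemma \ref{Lmultiplicitiespreserved}), so reindexing by $\sigma$ identifies the numerator and denominator; the right-hand product equals $c_{D}(\sigma)$ by (ii), and $c_{D}(\sigma)=1$ by Proposition \ref{Ptrivialcharacter}. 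Hence a suitable $t$, and therefore a representative $n=n_{0}t$ of $\omega$ fixing $\lambda$, exists. Carrying this out for each $\omega\in\Omega$ and choosing representatives arbitrarily outside $\Omega$ yields the desired section $\goodsectionext$. I expect the only genuine obstacle to be the careful verification of (ii): the rest is bookkeeping with the Moy--Prasad grading and the semidirect product $W\cong X^{\vee}\rtimes\finiteweylgroup$, whereas (ii) is where the structural input — the vanishing of $c_{D}$, which itself rested on Corollary \ref{Csumeven} — is actually consumed.
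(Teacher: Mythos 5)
Your proposal is correct and takes essentially the same route as the paper: reduce to finding a representative of each $\omega\in\Omega$ that fixes $\lambda$, express that condition as a system of scalar equations indexed by $\simpleaffinegradients$, use $c_D(\sigma)=1$ (Proposition \ref{Ptrivialcharacter}) to dispose of the overdetermination coming from the Highest Root Relation, and invoke adjointness of $G_{\sss}$ together with Lang's theorem to solve the remaining equations. Your coset/kernel framing of the solvability step and your item (ii) are just repackagings of the paper's direct reduction of system (\ref{Efixersystem}) and its asserted formula for the $i$th entry of $n\cdot\lambda$; you are also right that the $\uniformizer$-normalization in (ii) is the technical point the paper handles via the embedding $\cochargroup(T)\hookrightarrow T(\field)$, $\mu\mapsto\mu(\uniformizer)^{-1}$, and the footnoted identification $\mathbf{V}_{\barycenter}(\alpha)\cong\resfield$.
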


\begin{proof}
Fix arbitrary $ \omega \in \Omega $ and set $ \sigma := \prW ( \omega ) $. Arbitrarily order the weight basis for $ \mathbf{V}_{\barycenter} $, express $ \lambda $ as a $ \resfield $-matrix\footnote{Non-canonical identifications $ \mathbf{V}_{\barycenter} ( \alpha ) \cong \resfield $, which are always of the form $ ( \uniformizer ) / ( \uniformizer )^2 \cong \resfield $, are to be made using the fixed uniformizer $ \uniformizer $.} relative to this basis, and let $ \lambda_0, \lambda_1, \ldots, \lambda_{\ell} \in \integers $ be representatives for the entries of this matrix. By convention, $ \lambda_0 $ is the multiplier for the line $ \mathbf{V}_{\barycenter} ( -\highestroot ) $, and it will be convenient to set $ \alpha_0 := - \highestroot $. The assumption that $ \lambda $ is an affine generic character means that $ \lambda_0, \lambda_1, \ldots, \lambda_{\ell} \in \integers^{\times} $. 

Let $ n \in N_G ( T )( \field ) $ be an arbitrary representative of $ \omega $. By embedding $ \cochargroup ( T ) \hookrightarrow T ( \field ) $ via $ \mu \mapsto \mu ( \uniformizer )^{-1} $, there are $ n_{\finiteweylsubscript} \in N_G(T)(\field) $ representing $ \sigma $ and $ \tau \in \compacttorus $ such that the $ i $th entry of $ n \cdot \lambda $ is represented by $ \lambda_{ \sigma ( i ) } \cdot c ( n_{\finiteweylsubscript}, \alpha_i ) \cdot \alpha_i ( \tau ) $, where ``$ \sigma ( i ) $'' is shorthand for ``the $ j $ such that $ \sigma ( \alpha_i ) = \alpha_j $''. On the other hand, if $ t \in T ( \field ) $ then the $ i $th entry of $ t \cdot \lambda $ is $ \alpha_i ( t ) \cdot \lambda_i $. Thus, it suffices to show that there is $ t \in \compacttorus $ such that 
\begin{align}
\lambda_0 \cdot \alpha_0 ( t ) &= \lambda_{ \sigma ( 0 ) } \cdot c ( n_{\finiteweylsubscript}, \alpha_0 ) \label{Efixersystem} \\
\lambda_1 \cdot \alpha_1 ( t ) &= \lambda_{ \sigma ( 1 ) } \cdot c ( n_{\finiteweylsubscript}, \alpha_1 ) \nonumber \\
&{\vdots} \nonumber \\
\lambda_{\ell} \cdot \alpha_{\ell} ( t ) &= \lambda_{ \sigma ( \ell ) } \cdot c ( n_{\finiteweylsubscript}, \alpha_{\ell} ) \nonumber
\end{align}
because then $ n $ may be replaced by $ n \cdot \tau^{-1} \cdot t^{-1} $ to yield another representative of the same $ \omega $ with the desired behavior. \emph{Strictly speaking, it is required only that (\ref{Efixersystem}) is true after passing to $ \resfield $.}

Observe first that this seemingly overdetermined system (\ref{Efixersystem}) has a redundancy. The product of all equations in the system (\ref{Efixersystem}), each with multiplicity $ m_i $, yields $ \prod_i \lambda_i^{m_i} \cdot \prod_i \alpha_i ( t )^{m_i} = \prod_i \lambda_{ \sigma ( i ) }^{m_i} \cdot c_D ( \sigma ) $ where $ D $ is the Highest Root Relation. By Lemma \ref{Lmultiplicitiespreserved}, this reduces to $ \prod_i \alpha_i ( t )^{m_i} = c_D ( \sigma ) $. Since $ - \alpha_0 = m_1 \alpha_1 + \cdots + m_{\ell} \alpha_{\ell} $, this reduces to $ 1 = c_D ( \sigma ) $. Thus, to show that the equation $ \lambda_0 \cdot \alpha_0 ( t ) = \lambda_{ \sigma ( 0 ) } \cdot c ( n_{\finiteweylsubscript}, \alpha_0 ) $ is implied by the other equations, it suffices to show that $ 1 = c_D ( \sigma ) $ is true, which is Proposition \ref{Ptrivialcharacter}.

So, the claim reduces to the following: there exists $ t \in \compacttorus $ such that $ \alpha_i ( t ) = \lambda_i^{-1} \cdot \lambda_{ \sigma ( i ) } \cdot c ( n_{\finiteweylsubscript}, \alpha_i ) $ for all $ i = 1, \ldots, \ell $. By passing to the semisimplification $ G_{\sss} $ and using surjectivity of $ T \rightarrow T / Z ( G )^{\circ} $ on $ \resfield $-points (Lang's Theorem), I may assume that $ G = G_{\sss} $, which is adjoint by hypothesis. In this case, $ \compacttorus \rightarrow \myhom ( Q, \integers^{\times} ) $ is surjective due to $ Q = \chargroup ( T ) $, which means that such $ t $ exists.
\end{proof}

\textbf{Porism:} Note that, in the proof of Theorem \ref{Tfixingrepresentative}, the solution $ t \in \compacttorus = \integralmodel{T} ( \integers ) $ has a unique image $ \boldsymbol{t} \in \integralmodel{T} ( \resfield ) = \specialfiber{T} ( \resfield ) $.

\begin{remark}
One case of Theorem \ref{Tfixingrepresentative}, and the refinements in \S\ref{SSrefinements} below, was established in \cite{KL} for $ G = \GL $.
\end{remark}

\subsection{Non-trivial elements in the fixer: non-adjoint case} \label{SSgoodrepresentativesnonadjointcase}

In order to describe what happens when $ G_{\sss} $ is non-adjoint, it is convenient to introduce a cohomological tool to decide when a solution lifts from $ G_{\ad} $ to $ G_{\sss} $.

\begin{notation}
Recall that $ \resfield $ is perfect and, for any $ \resfield $-group $ H $, denote by $ H^1 ( \resfield, H ) $ the first Galois Cohomology $ H^1 ( \gal ( \overline{\resfield} / \resfield ), H ( \overline{\resfield} ) ) $.
\end{notation}

Let $ \specialfiber{T} $ be a split $ \resfield $-torus and fix $ n \in \N $. If $ \chi \in \chargroup ( \mathbf{T} ) $ then, by pushforward, $ \chi $ defines a group homomorphism 
\begin{equation*}
\del_{\chi} : \specialfiber{T} ( \resfield ) \rightarrow \resfield^{\times} / ( \resfield^{\times} )^n
\end{equation*}

By the Kummer Isomorphism\footnote{Despite the fact that this is usually accompanied by the assumption that $ \gcd ( \mychar ( \resfield ), m ) = 1 $, it is true here without this assumption since $ \resfield $ is perfect. Presumably, that assumption is imposed to guarantee surjectivity on points valued in the \emph{separable} closure.}, 
\begin{equation*}
H^1 ( \resfield, \MU_n ) \cong \resfield^{\times} / ( \resfield^{\times} )^n
\end{equation*}
and so $ \chi \mapsto \del_{\chi} $ defines a group homomorphism 
\begin{equation}
\chargroup ( \specialfiber{T} ) \longrightarrow \myhom ( \specialfiber{T} ( \resfield ), H^1 ( \resfield, \MU_n ) ) \label{Einducedconnectingmap}
\end{equation}

\begin{lemma} \label{Lallconnectingmapscomefromcharacters}
Map (\ref{Einducedconnectingmap}) is surjective.
\end{lemma}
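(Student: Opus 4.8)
The plan is to make everything explicit by splitting $\specialfiber{T}$ and then reducing to an elementary fact about homomorphisms of finite cyclic groups. First I would fix an isomorphism $\specialfiber{T} \cong \Gm^{r}$ of the split torus, which identifies $\chargroup(\specialfiber{T})$ with $\Z^{r}$ (via the coordinate characters) and $\specialfiber{T}(\resfield)$ with $C^{r}$, where $C \defeq \resfield^{\times}$. Since $\resfield$ is finite --- it is the residue field of the local field $\field$ --- the group $C$ is cyclic, of order $q-1$ for some prime power $q$. Writing $A \defeq H^{1}(\resfield, \MU_{n})$, the Kummer isomorphism recalled just above identifies $A$ with $\resfield^{\times}/(\resfield^{\times})^{n} = C/C^{n}$, a cyclic group; let $q_{0} \colon C \to A$ denote the natural quotient map.

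Next I would trace through the definitions to see what map (\ref{Einducedconnectingmap}) does in these coordinates. A character $\chi = (a_{1}, \ldots, a_{r}) \in \Z^{r}$ is sent to $\del_{\chi} \colon C^{r} \to A$, $(t_{1}, \ldots, t_{r}) \mapsto \prod_{i} t_{i}^{a_{i}} \bmod (\resfield^{\times})^{n}$. Because a homomorphism out of a direct product is determined by its restrictions to the factors, $\myhom(C^{r}, A) \cong \myhom(C, A)^{r}$, and under this identification $\del_{\chi}$ corresponds to the tuple $(a_{1} q_{0}, \ldots, a_{r} q_{0})$, where I write $a q_{0}$ for the homomorphism $t \mapsto q_{0}(t)^{a}$. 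Thus surjectivity of (\ref{Einducedconnectingmap}) is equivalent to the assertion that every homomorphism $C \to A$ has the form $a q_{0}$ for some $a \in \Z$.

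That last assertion is the only real content, and it is immediate from cyclicity: a homomorphism $\phi \colon C \to A$ out of the cyclic group $C = \langle g \rangle$ is determined by $\phi(g) \in A$, and since $q_{0}$ is surjective $\phi(g) = q_{0}(g)^{a}$ for some $a \in \Z$, whence $\phi = a q_{0}$. Therefore every tuple in $\myhom(C, A)^{r}$ is realized by a suitable $\chi$, and (\ref{Einducedconnectingmap}) is surjective. I do not expect a genuine obstacle: the only non-formal step is recognizing this cyclic-group fact, and the one place where the hypotheses are used in an essential way is the cyclicity of $\resfield^{\times}$ --- for a general perfect base field the analogous statement can fail, so I would not attempt to dispense with finiteness of $\resfield$.
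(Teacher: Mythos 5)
Your proof is correct, and it reaches the same conclusion by a genuinely more elementary route than the paper's. The paper avoids choosing a basis and instead manipulates the coordinate-free identifications $\specialfiber{T}(R) \cong \cochargroup(\specialfiber{T}) \otimes_{\Z} R^{\times}$ and $\myhom(\specialfiber{T}(R),\Gamma) \cong \myhom(\cochargroup(\specialfiber{T}) \otimes_{\Z} R^{\times}, \Gamma)$ directly: assuming $R^{\times} \cong \Z/N\Z$ and $\Gamma \cong \Z/M\Z$ with $M \mid N$, it runs a chain of hom/tensor identities (change of base ring from $\Z$ to $\Z/N\Z$, tensor-hom adjunction, then freeness of $\cochargroup(\specialfiber{T})$ to pull the $\otimes\, \Z/M\Z$ outside the Hom) to get $\myhom(\specialfiber{T}(R),\Gamma) \cong \chargroup(\specialfiber{T}) \otimes_{\Z} \Gamma$ with the map in question becoming the canonical (obviously surjective) map $\chargroup(\specialfiber{T}) \to \chargroup(\specialfiber{T}) \otimes_{\Z} \Gamma$. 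You instead split the torus as $\Gm^{r}$, peel off the factors via $\myhom(C^{r},A) \cong \myhom(C,A)^{r}$, and settle the rank-one case by the elementary observation that a homomorphism out of a finite cyclic group $C$ is determined by the image of a generator and $q_{0}(g)$ generates $A$. Both proofs consume exactly the same non-formal input — finiteness and cyclicity of $\resfield^{\times}$, which the paper imposes as the hypotheses on $R^{\times}$ in its auxiliary computation — so neither is more general. Your version is shorter and more transparent for a reader who just wants the statement for this particular $\resfield$; the paper's version packages the content so that it is immediately visible which module-theoretic facts (freeness of the cocharacter lattice, in particular) are doing the work, which is more useful if one later wants to relax the hypotheses on $R$ and $\Gamma$.
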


\begin{proof}
Let $ R $ be a commutative $ \resfield $-algebra and $ \Gamma $ an abelian group. As usual, $ \specialfiber{T} ( R ) \directedisom \cochargroup ( \specialfiber{T} ) \otimes_{\Z} R^{\times} $ and so $ \myhom ( \specialfiber{T} ( R ), \Gamma ) \directedisom \myhom ( \cochargroup ( \specialfiber{T} ) \otimes_{\Z} R^{\times}, \Gamma ) $. Assume further that $ \Gamma $ is a quotient of $ R^{\times} $ with a given quotient map $ R^{\times} \canarrow \Gamma $. If $ \chi \in \chargroup ( \specialfiber{T} ) $ then, by duality, $ \chi $ defines a group homomorphism $ \cochargroup ( \specialfiber{T} ) \otimes_{\Z} R^{\times} \rightarrow R^{\times} \canarrow \Gamma $. Hence, there is a group homomorphism $ \delta : \chargroup ( \specialfiber{T} ) \rightarrow \myhom ( \cochargroup ( \specialfiber{T} ) \otimes_{\Z} R^{\times}, \Gamma ) $. Now, assume further that $ R^{\times} $ is finite and cyclic, say $ R^{\times} = \Z / N \Z $, in which case $ \Gamma $ is also, say $ \Gamma = \Z / M \Z $ with $ M | N $ (abbreviate these to $ \Z_N $ and $ \Z_M $). I claim now that $ \myhom ( \cochargroup ( \specialfiber{T} ) \otimes_{\Z} R^{\times}, \Gamma ) $ is simply $ \chargroup ( \specialfiber{T} ) \otimes_{\Z} \Gamma $ and that the map $ \delta $ is the canonical map. This follows from some standard hom/tensor identities:
\begin{align*}
\myhom_{\Z} ( \cochargroup ( \specialfiber{T} ) \otimes_{\Z} \Z_N, \Z_M ) &{\cong} \myhom_{\Z_N} ( \cochargroup ( \specialfiber{T} ) \otimes_{\Z} \Z_N, \Z_M ) \\
&{\cong} \myhom_{\Z} ( \cochargroup ( \specialfiber{T} ), \Z_M ) \\
&{\cong} \myhom_{\Z} ( \cochargroup ( \specialfiber{T} ), \Z ) \otimes_{\Z} \Z_M \\
&{\cong} \chargroup ( \specialfiber{T} ) \otimes_{\Z} \Z_M
\end{align*}
This\footnote{Since $ \Z_M $ is not a flat $ \Z $-module, the third isomorphism requires the fact that $ \cochargroup ( \specialfiber{T} ) $ is free.} shows that, with the assumptions on $ R, \Gamma $ made earlier, the composite $ \chargroup ( \specialfiber{T} ) \rightarrow \myhom ( \specialfiber{T} ( R ), \Gamma ) $ is surjective. To conclude the proof, choose $ R := \resfield $, choose $ \Gamma := \resfield^{\times} / ( \resfield^{\times} )^n $, note that $ R, \Gamma $ satisfy the needed assumptions since $ \resfield $ is a finite field, and note that the composite is clearly the map $ \chi \mapsto \del_{\chi} $ above.
\end{proof}

\begin{corollary}[Connecting Map is a Character] \label{Cconnectingmapisaproductofroots}
Let $ \specialfiber{G} $ be a split connected (almost-)simple affine algebraic $ \resfield $-group. Set $ \specialfiber{G}_{\ad} \defeq \specialfiber{G} / Z ( \specialfiber{G} ) $ and assume that $ Z ( \specialfiber{G} ) \cong \MU_n $ for some $ n \in \N $. Let $ \del : \specialfiber{G}_{\ad} ( \resfield ) \rightarrow H^1 ( \resfield, \MU_n ) $ be the connecting map from the long-exact-sequence associated to $ 1 \rightarrow \MU_n \rightarrow \specialfiber{G} \rightarrow \specialfiber{G}_{\ad} \rightarrow 1 $. \textbf{Assertion:} If $ \specialfiber{T}_{\ad} \subset \specialfiber{G}_{\ad} $ is a split maximal torus then the restriction of $ \partial $ to $ \specialfiber{T}_{\ad} ( \resfield ) $ is an algebraic character, hence a $ \Z $-linear combination of simple roots.
\end{corollary}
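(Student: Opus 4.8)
The plan is to reduce everything to Lemma \ref{Lallconnectingmapscomefromcharacters}, which already asserts that \emph{every} group homomorphism from the $\resfield$-points of a split torus to $H^1(\resfield, \MU_n)$ has the form $\del_{\chi}$ for some algebraic character $\chi$. So the only genuine work is (i) to check that the restriction of $\del$ to $\specialfiber{T}_{\ad}(\resfield)$ is a group homomorphism, and (ii) to identify $\chargroup(\specialfiber{T}_{\ad})$ with the root lattice.

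For (i), I would use that $\MU_n = Z(\specialfiber{G})$ is \emph{central} in $\specialfiber{G}$: the connecting map $\del : \specialfiber{G}_{\ad}(\resfield) \to H^1(\resfield, \MU_n)$ attached to $1 \to \MU_n \to \specialfiber{G} \to \specialfiber{G}_{\ad} \to 1$ is then a genuine group homomorphism, not merely a map of pointed sets. The verification is the standard one: given $g_1, g_2 \in \specialfiber{G}_{\ad}(\resfield)$, lift them to $\tilde{g}_i \in \specialfiber{G}(\overline{\resfield})$, so that $\tilde{g}_1 \tilde{g}_2$ lifts $g_1 g_2$; the cocycle $\del(g_1 g_2)$ sends $\gamma$ to $\gamma(\tilde{g}_1)\gamma(\tilde{g}_2)\tilde{g}_2^{-1}\tilde{g}_1^{-1}$, and since $\gamma(\tilde{g}_2)\tilde{g}_2^{-1} \in \MU_n(\overline{\resfield})$ is central this rearranges to $\del(g_1)(\gamma)\cdot\del(g_2)(\gamma)$. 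Restricting to $\specialfiber{T}_{\ad}(\resfield)$ therefore yields an element of $\myhom(\specialfiber{T}_{\ad}(\resfield), H^1(\resfield,\MU_n))$ (equivalently, by functoriality, this restriction is the connecting map of the sub-sequence $1 \to \MU_n \to \specialfiber{T} \to \specialfiber{T}_{\ad} \to 1$, for $\specialfiber{T}$ the preimage of $\specialfiber{T}_{\ad}$, but this is not needed).

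For (ii) and the conclusion: since $\specialfiber{T}_{\ad}$ is a split $\resfield$-torus, Lemma \ref{Lallconnectingmapscomefromcharacters}, applied with its torus taken to be $\specialfiber{T}_{\ad}$ and with the same $n$, says that the map $\chi \mapsto \del_{\chi}$, i.e.\ (\ref{Einducedconnectingmap}) for $\specialfiber{T}_{\ad}$, is surjective; hence $\del|_{\specialfiber{T}_{\ad}(\resfield)} = \del_{\chi}$ for some $\chi \in \chargroup(\specialfiber{T}_{\ad})$, i.e.\ it is induced by an algebraic character. Finally, because $\specialfiber{G}_{\ad}$ is adjoint it is standard that $\chargroup(\specialfiber{T}_{\ad})$ is the root lattice $Q$, which has the simple roots as a $\Z$-basis, so $\chi$ is a $\Z$-linear combination of simple roots. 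I do not expect a serious obstacle: the substantive content is already packaged into Lemma \ref{Lallconnectingmapscomefromcharacters} (whose proof in turn exploits finiteness of $\resfield^{\times}$), and the only point needing care is the meaning of $H^1(\resfield,\MU_n)$ when $\mychar(\resfield)$ divides $n$ --- but the footnote to the Kummer Isomorphism above has already arranged that $H^1(\resfield,\MU_n) \cong \resfield^{\times}/(\resfield^{\times})^n$ regardless, since $\resfield$ is perfect.
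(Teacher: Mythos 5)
Your proposal is correct and follows the same route as the paper: invoke Lemma \ref{Lallconnectingmapscomefromcharacters} (surjectivity of $\chi \mapsto \del_{\chi}$) and then use that $\chargroup(\specialfiber{T}_{\ad})$ is the root lattice because $\specialfiber{G}_{\ad}$ is adjoint. The only difference is that you explicitly verify the standard fact that the connecting map for a central extension is a group homomorphism (so that the Lemma is applicable), whereas the paper simply writes ``this follows immediately'' and leaves that point to the reader.
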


\begin{proof}
This follows immediately from Lemma \ref{Lallconnectingmapscomefromcharacters}, noting for the last conclusion that $ \chargroup ( \specialfiber{T}_{\ad} ) $ is spanned by simple roots since $ \specialfiber{G}_{\ad} $ is adjoint.
\end{proof}

\begin{examplenum} \label{EXconnectingmapischaracter}
Let $ F $ be any perfect field and let $ G $ be the $ F $-group $ \GL_3 $, so $ G_{\ad} = \PGL_3 $. Let $ T \subset \GL_3 $ be the diagonal torus and $ T_{\ad} \subset \PGL_3 $ its image. Let $ \epsilon_1, \epsilon_2, \epsilon_3 : T \rightarrow \Gm $ be the obvious generators for $ \chargroup ( T ) $, so that $ \simpleroots = \{ \alpha, \beta \} $ for $ \alpha = \epsilon_1 - \epsilon_2 $ and $ \beta = \epsilon_2 - \epsilon_3 $. Intrinsically, the connecting map $ \del : \PGL_3 ( F ) \rightarrow F^{\times} / ( F^{\times} )^3 $ coming from $ 1 \rightarrow \MU_3 \rightarrow \SL_3 \rightarrow \PGL_3 \rightarrow 1 $ is induced by the determinant $ \det : \GL_3 ( F ) \rightarrow F^{\times} $. Therefore, if $ t \in T ( F ) $ then $ \det ( t ) = \epsilon_1 ( t ) \cdot \epsilon_2 ( t ) \cdot \epsilon_3 ( t ) = \alpha ( t ) \cdot \beta ( t )^2 \cdot \epsilon_3 ( t )^3 $ and so, since values of $ \del $ are modulo $ ( F^{\times} )^3 $, this means that $ \del \vert_{T_{\ad}} = \alpha + 2 \beta $.
\end{examplenum}

Now, suppose $ G_{\sss} $ is (possibly) non-adjoint and let $ T_{\sss} \subset G_{\sss} $ be the image of $ T $, a split maximal torus in $ G_{\sss} $. Necessarily $ Z ( G_{\sss} ) $ is cyclic, since $ \roots $ is irreducible. Apply Theorem \ref{Tfixingrepresentative} to $ G_{\ad} $ and get a solution $ \boldsymbol{t}_{\ad} \in T_{\ad} ( \resfield ) $ to system (\ref{Efixersystem}). By the uniqueness of this solution as a $ \resfield $-point, there is a solution $ \boldsymbol{t} \in T_{\sss} ( \resfield ) $ to system (\ref{Efixersystem}) if and only if $ \del ( \boldsymbol{t}_{\ad} ) = 0 $, where $ \del $ is the connecting map in the long-exact-sequence associated to $ 1 \rightarrow Z ( G_{\sss} ) \rightarrow  G_{\sss} \rightarrow G_{\ad} \rightarrow 1 $. By system (\ref{Efixersystem}) itself, $ \alpha_i ( \boldsymbol{t}_{\ad} ) $ is a function of $ \lambda $ and $ \omega $ for all $ i $. By Corollary \ref{Cconnectingmapisaproductofroots}, this means that $ \del ( \boldsymbol{t}_{\ad} ) \in H^1 ( \resfield, Z ( G_{\sss} ) ) $ is also a function of $ \lambda $ and $ \omega $.

\begin{defn}
Define $ \del [ \lambda, \omega ] \in H^1 ( \resfield, Z ( G_{\sss} ) ) $ to be the class determined by the solution $ \boldsymbol{t}_{\ad} \in T_{\ad} ( \resfield ) $ to system (\ref{Efixersystem}).
\end{defn}

\begin{remark}
The definition of $ \del [ \lambda, \omega ] $ depends critically on the fact that $ \lambda $ is stable.
\end{remark}

The following is clear from the discussion preceding the Definition of $ \del [ \lambda, \omega ] $, and is recorded here for completeness:

\begin{prop}[Cohomological Obstruction] \label{Pcohomologicaltest}
Assume that $ Z ( G_{\sss} ) $ is cyclic. Fix $ \omega \in \Omega $. \textbf{Assertion:} There exists a representative $ n \in N_G(T)(\field) $ of $ \omega $ for which $ n \cdot \lambda = \lambda $ if and only if $ \del [ \lambda, \omega ] = 0 $.
\end{prop}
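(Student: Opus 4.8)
The plan is to concatenate equivalences that are already essentially in hand, so that the argument is pure bookkeeping. The first link is furnished by the opening of the proof of Theorem~\ref{Tfixingrepresentative}, which was carried out for an arbitrary $ G $ (adjointness was used only at the very end, to actually solve the system), together with the observation that the representatives of $ \omega \in \Omega \subset W $ in $ N_G(T)(\field) $ form a single $ \compacttorus $-coset, all of whose members fix $ \barycenter $. I would therefore begin by recording that \emph{there is a representative $ n $ of $ \omega $ with $ n \cdot \lambda = \lambda $ if and only if the system~(\ref{Efixersystem}) has a solution $ t \in \compacttorus $ after passage to $ \resfield $}: running $ t $ through $ \compacttorus $ in the substitution $ n \mapsto n \tau^{-1} t^{-1} $ of that proof sweeps out all representatives. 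Since $ \integralmodel{T} $ is a split torus, $ \compacttorus = \integralmodel{T}(\integers) $ surjects coordinatewise onto $ \integralmodel{T}(\resfield) = \specialfiber{T}(\resfield) $, so the condition may be phrased as the solvability of~(\ref{Efixersystem}) among $ \resfield $-points of the torus.

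Next I would descend~(\ref{Efixersystem}) to the semisimplification. Every root $ \alpha_i $ kills $ Z(G)^{\circ} $ and so factors through $ T_{\sss} = T / Z(G)^{\circ} $; since $ Z(G)^{\circ} $ is connected, Lang's Theorem gives a surjection $ \specialfiber{T}(\resfield) \twoheadrightarrow T_{\sss}(\resfield) $. Hence~(\ref{Efixersystem}) is solvable over $ \resfield $-points of $ T $ if and only if it is solvable over $ T_{\sss}(\resfield) $ (project one way, lift the other, noting that the roots do not change). The same two facts, now with $ Z(G_{\sss}) = \kernel(T_{\sss} \to T_{\ad}) $ in place of $ Z(G)^{\circ} $, show that each $ \alpha_i $ factors further through $ T_{\ad} $: the image in $ T_{\ad}(\resfield) $ of any solution of~(\ref{Efixersystem}) in $ T_{\sss}(\resfield) $ is again a solution of~(\ref{Efixersystem}), and by the Porism following Theorem~\ref{Tfixingrepresentative} (applied to the adjoint group $ G_{\ad} $) that solution is the \emph{unique} one, namely $ \boldsymbol{t}_{\ad} $. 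Conversely, any preimage of $ \boldsymbol{t}_{\ad} $ in $ T_{\sss}(\resfield) $ solves~(\ref{Efixersystem}), as~(\ref{Efixersystem}) involves only the roots. So~(\ref{Efixersystem}) is solvable over $ T_{\sss}(\resfield) $ exactly when $ \boldsymbol{t}_{\ad} $ lies in the image of $ T_{\sss}(\resfield) \to T_{\ad}(\resfield) $.

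Finally I would note that $ \del(\boldsymbol{t}_{\ad}) = 0 $ if and only if $ \boldsymbol{t}_{\ad} $ lifts to $ T_{\sss}(\resfield) $. Indeed, by exactness of the long exact sequence attached to $ 1 \to Z(G_{\sss}) \to G_{\sss} \to G_{\ad} \to 1 $, the vanishing $ \del(\boldsymbol{t}_{\ad}) = 0 $ means $ \boldsymbol{t}_{\ad} $ is the image of some $ g \in G_{\sss}(\resfield) $; but the preimage of $ T_{\ad} $ under $ G_{\sss} \to G_{\ad} $ is exactly $ T_{\sss} $ (because $ Z(G_{\sss}) \subset T_{\sss} $ and $ T_{\sss} \twoheadrightarrow T_{\ad} $), whence $ g \in T_{\sss}(\resfield) $; the converse is trivial. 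Since $ \del[\lambda,\omega] = \del(\boldsymbol{t}_{\ad}) $ by definition, chaining the equivalences of the previous two paragraphs yields the assertion.

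I do not expect a genuine obstacle; the one point deserving care is the middle step, where one must be certain that a solution of~(\ref{Efixersystem}) over $ T_{\sss}(\resfield) $ is genuinely tied to the canonical adjoint solution $ \boldsymbol{t}_{\ad} $ through the projection $ T_{\sss} \to T_{\ad} $ --- this is precisely what the uniqueness in the Porism to Theorem~\ref{Tfixingrepresentative} provides. The standing hypothesis that $ Z(G_{\sss}) $ be cyclic is used exactly as in the discussion preceding the Definition of $ \del[\lambda,\omega] $, namely so that this class lives in the Kummer group $ H^1(\resfield, \MU_n) $ to which Corollary~\ref{Cconnectingmapisaproductofroots} and the earlier lemmas apply.
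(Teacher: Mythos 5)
Your proof is correct and follows the same route that the paper sketches: the paper's own ``proof'' of Proposition \ref{Pcohomologicaltest} is the single remark that it is ``clear from the discussion preceding the Definition of $\del[\lambda,\omega]$,'' and your argument is precisely that discussion, written out in full (the reduction of $n\cdot\lambda=\lambda$ to solvability of system (\ref{Efixersystem}) over $\compacttorus$ modulo $\uniformizer$, descent to $T_{\sss}(\resfield)$ via Lang's theorem, identification of the image in $T_{\ad}(\resfield)$ with $\boldsymbol{t}_{\ad}$ via the Porism's uniqueness, and translation of ``$\boldsymbol{t}_{\ad}$ lifts'' into the vanishing of the connecting map). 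No new ideas and no gaps; you have simply made the paper's implicit reasoning explicit, including the correct observation that adjointness enters the proof of Theorem \ref{Tfixingrepresentative} only at the final solvability step, so its opening applies to arbitrary $G$.
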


Note that if $ G $ is \emph{not} within the scope of Theorem \ref{Tfixingrepresentative}, i.e. if $ G_{\sss} $ is non-adjoint, then $ G $ is necessarily within the scope of Proposition \ref{Pcohomologicaltest}.

Since the image of $ \Omega \rightarrow \Omega_{\ad} $ is trivial when $ G_{\sss} $ is simply-connected, the conclusion of Theorem \ref{Tfixingrepresentative} is vacuously true. If $ G_{\sss} $ is adjoint then $ \Omega \rightarrow \Omega_{\ad} $ is surjective, i.e. $ \Omega $ is ``maximal'', but Theorem \ref{Tfixingrepresentative} applies. There are not many groups remaining: if $ G $ is as in \S\ref{Snotation} and $ G_{\sss} $ is neither simply-connected nor adjoint then $ G_{\sss} = \SL_n / \MU_m $ for some $ m | n $ or $ G_{\sss} $ is type D and $ Z ( G_{\sss} ) = \MU_2 $. 

I begin treatment of each of these cases:

\begin{prop}[Theorem \ref{Tfixingrepresentative} for non-adjoint Type A] \label{PtypeAisok}
If $ G_{\sss} $ is type A then the conclusion of Theorem \ref{Tfixingrepresentative} is true.
\end{prop}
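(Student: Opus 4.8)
The plan is to verify the cohomological criterion of Proposition \ref{Pcohomologicaltest} by a short explicit computation that exploits the ``determinant'' shape of the relevant connecting map together with an elementary parity observation.

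Exactly as in the proof of Theorem \ref{Tfixingrepresentative} --- passing to the semisimplification and using surjectivity of $T ( \resfield ) \to ( T / Z ( G )^{\circ} ) ( \resfield )$ (Lang's Theorem) --- one reduces to the case $G = G_{\sss}$. Since $G_{\sss}$ is then semisimple, non-adjoint, and of type A, one has $G = \SL_n / \MU_m$ for a divisor $m$ of $n$ with $1 < m < n$; set $d \defeq n / m$, so that $Z ( G ) = \MU_d$, note that $\Omega$ is cyclic, say with generator $\omega_0$, and that $\prW$ is injective on $\Omega$. By Proposition \ref{Pcohomologicaltest} it suffices to prove $\del [ \lambda, \omega_0 ] = 0$: every element of $\Omega$ then has a $\lambda$-fixing representative (the powers of one representative of $\omega_0$ do), and a section $\goodsectionext$ is obtained by taking these on $\Omega$ and arbitrary representatives elsewhere.

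Write $\sigma_0 \defeq \prW ( \omega_0 )$. Inside the automorphism group of the affine Dynkin diagram of $A_{n-1}$ (an $n$-gon), $\prW ( \Omega )$ is the rotation subgroup of order $m$, so $\sigma_0$ is rotation by $d$; hence the simple root $R$ (the unique one with $\sigma_0 ( R ) = - \highestroot$) is $\alpha_{n-d}$ in the Bourbaki labeling, of index $n - d$. Compute $\del [ \lambda, \omega_0 ]$ using the canonical representative $n_{\finiteweylsubscript} = \canonicalsection ( \sigma_0 )$ in system (\ref{Efixersystem}). By Corollary \ref{Cconnectingmapisaproductofroots} the connecting map restricted to $\specialfiber{T}_{\ad} ( \resfield )$ is an algebraic character; functoriality of connecting homomorphisms, applied to the evident morphism from $1 \to \MU_n \to \SL_n \to \PGL_n \to 1$ to $1 \to \MU_d \to \SL_n / \MU_m \to \PGL_n \to 1$, identifies it --- via the generalization of Example \ref{EXconnectingmapischaracter} --- with the determinant character $\sum_{i=1}^{n-1} i \, \alpha_i$, now read modulo $d$-th powers. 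Substituting $\alpha_i ( \boldsymbol{t}_{\ad} ) = \lambda_i^{-1} \lambda_{\sigma_0 ( i )} c ( \canonicalsection ( \sigma_0 ), \alpha_i )$ from (\ref{Efixersystem}), the $\lambda$-dependent part of $\del [ \lambda, \omega_0 ]$ is $\prod_{j=0}^{n-1} \lambda_j^{\, \sigma_0^{-1} ( j ) - j}$; since $\sigma_0$ shifts the cyclically-ordered indices by $d$ and $d \mid n$, each exponent $\sigma_0^{-1} ( j ) - j$ is a multiple of $d$, so this factor is a $d$-th power and disappears. Thus $\del [ \lambda, \omega_0 ] \equiv \prod_{i=1}^{n-1} c ( \canonicalsection ( \sigma_0 ), \alpha_i )^{\, i} \pmod{ ( \resfield^{\times} )^d }$.

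Since the realization may be taken to be Chevalley, $c ( \canonicalsection ( \sigma_0 ), \alpha_i ) = 1$ whenever $\alpha_i$ and $\sigma_0 ( \alpha_i )$ both lie in $\simpleroots$, which fails only for $\alpha_i = R$; so the product collapses to $c ( \canonicalsection ( \sigma_0 ), R )^{\, n - d }$, a power of $\pm 1$. If $n - d$ is even this equals $1$; if $n - d$ is odd then, since $n - d = d ( m - 1 )$, the factor $d$ is odd, whence $\pm 1 = ( \pm 1 )^d \in ( \resfield^{\times} )^d$. Either way $\del [ \lambda, \omega_0 ] = 0$, which completes the proof. The step needing the most care is the identification of the connecting map on $\specialfiber{T}_{\ad}$ with the determinant character $\sum i \, \alpha_i$ modulo $d$-th powers --- pinning down the precise $\Z$-linear combination that Corollary \ref{Cconnectingmapisaproductofroots} supplies only abstractly --- because it is exactly this explicit form, together with $\sigma_0$ being rotation by a divisor of $n$, that kills the dependence of $\del [ \lambda, \omega_0 ]$ on $\lambda$; the concluding parity argument is the only arithmetic input and is trivial.
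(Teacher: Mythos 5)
Your proof is correct and follows essentially the same route as the paper: reduce to $G = G_{\sss}$, apply the cohomological test of Proposition \ref{Pcohomologicaltest}, identify the connecting map with the determinant character $\sum_i i\,\alpha_i$ read modulo $d$-th powers, and check that both the $\lambda$-dependent factor and the Chevalley-realization factor $c(\canonicalsection(\sigma_0),R)^{n-d}$ lie in $(\resfield^{\times})^{d}$. Your reorganization of the $\lambda$-part as $\prod_j \lambda_j^{\sigma_0^{-1}(j)-j}$ and the final parity case-split are just mild repackagings of the paper's computation (the paper instead writes the $c$-contribution directly as $((\pm1)^{m-1})^{d}$, which spares the case analysis), and your explicit remark that it suffices to check a single generator of the cyclic group $\Omega$ makes a step the paper leaves implicit.
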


\begin{proof}
By the proof of Theorem \ref{Tfixingrepresentative}, I may assume that $ G = G_{\sss} $. By Proposition \ref{Pcohomologicaltest}, it suffices to show that 
$ \del [ \lambda, \omega ] = 0 $ for all $ \omega \in \Omega $. Necessarily $ G_{\ad} = \PGL_n $ and there are $ m, d \in \N $ with $ n = m d $ such that $ G \defeq \SL_n / \MU_m $ and $ Z ( G ) = \MU_d $. Thus, there is a short-exact-sequence $ 1 \rightarrow \MU_d \rightarrow G \rightarrow \PGL_n \rightarrow 1 $. Let $ \specialfiber{T} $ and $ \specialfiber{T}_{\ad} $ be the $ \resfield $-split maximal diagonal tori in the obvious $ \resfield $-analogues of $ G $ and $ \PGL_n $. In the long-exact-sequence associated to $ 1 \rightarrow  \MU_d \rightarrow \specialfiber{T} \rightarrow \specialfiber{T}_{\ad} \rightarrow 1 $, there is the exact subsequence $ \specialfiber{T} ( \resfield ) \rightarrow \specialfiber{T}_{\ad} ( \resfield ) \stackrel{\del}{\longrightarrow} H^1 ( \resfield, \MU_d ) $. After using the Kummer Isomorphism, $ \del $ is induced by the determinant $ \det_{\resfield} $. Thus, similar to Example \ref{EXconnectingmapischaracter} above, it is clear that $ \det_{\resfield} ( \boldsymbol{t} ) = \alpha_1 ( \boldsymbol{t} ) \cdot \alpha_2 ( \boldsymbol{t} )^2 \cdots \alpha_{\ell} ( \boldsymbol{t} )^{\ell} $ in $ \resfield^{\times} / ( \resfield^{\times} )^d $ for any $ \boldsymbol{t} \in \specialfiber{T}_{\ad} ( \resfield ) $. Now, fix $ \omega \in \Omega $, let the notation be as in Theorem \ref{Tfixingrepresentative}, and let $ \boldsymbol{t}_{\ad} \in \specialfiber{T}_{\ad} ( \resfield ) $ be the solution to system (\ref{Efixersystem}) constructed in Theorem \ref{Tfixingrepresentative} for $ \PGL_n $. As explained by the discussion preceding Proposition \ref{Pcohomologicaltest}, $ \del [ \lambda, \omega ] = \del ( \boldsymbol{t}_{\ad} ) $ may be computed directly from the system (\ref{Efixersystem}) that $ \boldsymbol{t}_{\ad} $ solves: it is the image in $ \resfield^{\times} / ( \resfield^{\times} )^d $ of $ L \cdot L^{\prime} \cdot C $ for $ L = \lambda_1^{-1} \cdot ( \lambda_{2}^{-1} )^2 \cdots ( \lambda_{\ell}^{-1} )^{\ell} $, and $ L^{\prime} = \lambda_{\sigma( 1 )} \cdot ( \lambda_{\sigma ( 2 )} )^2 \cdots ( \lambda_{\sigma ( \ell )} )^{\ell} $, and $ C = c ( n_{\finiteweylsubscript}, \alpha_1 ) \cdot c ( n_{\finiteweylsubscript}, \alpha_2 )^2 \cdots c ( n_{\finiteweylsubscript}, \alpha_{\ell} )^{\ell} $. Thus, the claim is simply that the image of $ L \cdot L^{\prime} \cdot C $ in $ \resfield^{\times} $ is a $ d $th power. I now adopt the notation of Plate I \cite{bourbakiII}. I may assume that $ \omega \in \Omega \cong \Z / m \Z $ is the generator such that the permutation of $ \simpleaffinegradients $ by $ \sigma := \prW ( \omega ) $ is $ \alpha_i \mapsto \alpha_{i+d} $ (the subscripts are to be interpreted modulo $ n $). It is then immediate that $ L \cdot L^{\prime} $ is a $ d $th-power since, noting that $ \sigma ( 0 ) = d $ and $ \sigma ( n - d ) = 0 $, it is the product of $ ( \lambda_d^{-1} )^d $ and $ ( \lambda_0 )^{n-d} = ( ( \lambda_0 )^{m - 1} )^d $ with $ ( \lambda_{\sigma ( e )}^{-1} )^{\sigma ( e )} \cdot ( \lambda_{\sigma ( e )} )^{e} = ( \lambda_{\sigma ( e )} )^{-d} $ for all $ e $ such that $ \sigma ( e ) \neq 0 $ (i.e. all $ e \neq n - d $). I may also assume that the realization of $ \roots $ in $ G $ is a Chevalley Realization, in which case $ C = c ( n_{\finiteweylsubscript}, \alpha_{n-d} )^{n-d} = ( ( \pm 1 )^{m-1} )^d $, as desired.
\end{proof}

Among the only remaining family, type D, which contains groups that are not adjoint and not simply-connected, the following treats the easier situation:

\begin{prop}[Theorem \ref{Tfixingrepresentative} for non-adjoint Type D, case 1] \label{PtypeSOisok}
If $ G_{\sss} = \SO ( 2 \ell ) $ then the conclusion of Theorem \ref{Tfixingrepresentative} is true.
\end{prop}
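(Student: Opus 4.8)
The plan is to follow the template of Proposition \ref{PtypeAisok}. By the reduction at the start of the proof of Theorem \ref{Tfixingrepresentative} --- pass to the semisimplification $G_{\sss}$ and use surjectivity of $T \to T/Z(G)^{\circ}$ on $\resfield$-points (Lang) --- I may assume $G = G_{\sss} = \SO(2\ell)$, so that $Z(G) = \MU_2$ and $G_{\ad}$ is the corresponding adjoint group (we may also assume $\ell \geq 4$; for $\ell = 3$ the root system is of type $A$, treated by Proposition \ref{PtypeAisok}). As $Z(G) = \MU_2$ is cyclic, Proposition \ref{Pcohomologicaltest} reduces the assertion to showing $\del[\lambda,\omega] = 0$ in $H^1(\resfield,\MU_2)$ for every $\omega \in \Omega$, where $\del$ is the connecting map of $1 \to \MU_2 \to G \to G_{\ad} \to 1$. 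Since $\Omega \cong \cochargroup(T)/Q^{\vee} \cong \Z/2\Z$, the only case beyond the trivial one is the unique nontrivial $\omega_* \in \Omega$, which has order $2$.

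First I would make $\del|_{T_{\ad}}$ completely explicit. By Corollary \ref{Cconnectingmapisaproductofroots} it is an algebraic character of $T_{\ad}$. In the conventions of Plate IV of \cite{bourbakiII} --- $\chargroup(T) = \bigoplus_{i=1}^{\ell}\Z\epsilon_i$, $\simpleroots = \{\epsilon_1-\epsilon_2,\dots,\epsilon_{\ell-1}-\epsilon_\ell,\ \epsilon_{\ell-1}+\epsilon_\ell\}$, and $\chargroup(T_{\ad})$ the root lattice $Q$, of index $2$ in $\chargroup(T)$ with $\epsilon_1$ representing the nontrivial class --- a short computation with the Kummer Isomorphism gives $\del(t) = t(2\epsilon_1) \bmod (\resfield^{\times})^2$ for $t \in T_{\ad}(\resfield)$. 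Because $2\epsilon_1 = 2\alpha_1 + \dots + 2\alpha_{\ell-2} + \alpha_{\ell-1} + \alpha_\ell$ and even powers are squares, this reads $\del(t) = \alpha_{\ell-1}(t)\cdot\alpha_\ell(t) \bmod (\resfield^{\times})^2$: the obstruction sees only the two terminal simple roots.

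Next I would identify $\sigma_* \defeq \prW(\omega_*)$. Its permutation of $\simpleaffinegradients$ coincides with the permutation of the affine Dynkin diagram of $D_\ell$ induced by $\omega_{*,\ad} \in \Omega_{\ad}$, which is the class of the vector minuscule coweight $\varpi_1^{\vee}$ (this class generates the image $\cochargroup(T)/Q^{\vee}$ of $\Omega$ in $\Omega_{\ad} = P^{\vee}/Q^{\vee}$, and is the unique nontrivial element of $\Omega_{\ad}$ stable under the outer automorphism $\alpha_{\ell-1}\leftrightarrow\alpha_\ell$, since $\SO(2\ell)$ is self-dual). By Plate IV (XII) of \cite{bourbakiII} this automorphism interchanges $\alpha_0 = -\highestroot \leftrightarrow \alpha_1$ and $\alpha_{\ell-1}\leftrightarrow\alpha_\ell$, and fixes $\alpha_2,\dots,\alpha_{\ell-2}$; in particular $\{\alpha_{\ell-1},\alpha_\ell\}$ is $\sigma_*$-stable and consists of simple roots mapped to simple roots. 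Hence, taking a Chevalley Realization and $n_{\finiteweylsubscript} = \canonicalsection(\sigma_*)$, we get $c(n_{\finiteweylsubscript},\alpha_{\ell-1}) = c(n_{\finiteweylsubscript},\alpha_\ell) = 1$. Reading $\del[\lambda,\omega_*] = \del(\boldsymbol{t}_{\ad})$ off system (\ref{Efixersystem}) of the proof of Theorem \ref{Tfixingrepresentative}, where $\alpha_i(\boldsymbol{t}_{\ad}) = \lambda_i^{-1}\lambda_{\sigma_*(i)}\,c(n_{\finiteweylsubscript},\alpha_i)$ in $\resfield^{\times}$, and substituting the formula just found,
\[
\del[\lambda,\omega_*] \;=\; \alpha_{\ell-1}(\boldsymbol{t}_{\ad})\cdot\alpha_\ell(\boldsymbol{t}_{\ad}) \;=\; \bigl(\lambda_{\ell-1}^{-1}\lambda_\ell\cdot c(n_{\finiteweylsubscript},\alpha_{\ell-1})\bigr)\bigl(\lambda_\ell^{-1}\lambda_{\ell-1}\cdot c(n_{\finiteweylsubscript},\alpha_\ell)\bigr) \;=\; 1
\]
in $\resfield^{\times}/(\resfield^{\times})^2$: since $\sigma_*$ merely transposes the indices $\ell-1$ and $\ell$, the $\lambda$-factors cancel, and the two $c$-factors equal $1$. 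Thus $\del[\lambda,\omega_*] = 0$; Proposition \ref{Pcohomologicaltest} supplies a representative of $\omega_*$ fixing $\lambda$, and $\goodsectionext$ is assembled exactly as in Theorem \ref{Tfixingrepresentative}.

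I expect the only real content to be the two structural identifications --- that $\del|_{T_{\ad}}$ singles out $\alpha_{\ell-1}+\alpha_\ell$, and that $\prW(\omega_*)$ transposes $\alpha_{\ell-1}$ and $\alpha_\ell$ --- after which the $\lambda_i$ cancel in pairs and the Chevalley property removes the surviving signs, leaving nothing numerical to check. A useful cross-check on both facts: $\del$ is $\finiteweylgroup$-equivariant ($W$ acts trivially on $\MU_2$), so $\del|_{T_{\ad}}$ must be $\sigma_*$-invariant, in agreement with $\sigma_*(\alpha_{\ell-1}+\alpha_\ell) = \alpha_{\ell-1}+\alpha_\ell$.
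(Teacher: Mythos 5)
Your proof is correct, but it takes a genuinely different route from the paper's. You run the $\SO(2\ell)$ case through the cohomological obstruction: invoke Proposition \ref{Pcohomologicaltest}, compute the connecting character $\del\vert_{T_{\ad}}$ explicitly (correctly, as $t \mapsto (2\epsilon_1)(t) = \alpha_{\ell-1}(t)\cdot\alpha_\ell(t) \bmod (\resfield^{\times})^2$), observe that $\sigma_*$ merely transposes $\alpha_{\ell-1}\leftrightarrow\alpha_\ell$ so the $\lambda$-factors cancel and the Chevalley property kills the signs, and conclude $\del[\lambda,\omega_*]=0$. The paper does something more elementary: it never touches $\del$ in this case at all. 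Instead it notes that for $\SO(2\ell)$ the cocharacter lattice is $\myspan_{\Z}(\epsilon_1^{\vee},\ldots,\epsilon_{\ell}^{\vee})$, so the coordinates $\epsilon_i(t)$ can be prescribed independently, and it simply writes down an explicit $t\in T(\field)$ solving system (\ref{Efixersystem}) by hand (taking $\epsilon_\ell(t)=\lambda_\ell^{-1}\lambda_{\ell-1}$, $\epsilon_2(t)=\cdots=\epsilon_{\ell-1}(t)=1$, $\epsilon_1(t)=\lambda_1^{-1}\lambda_0\,c(n_\circ,\alpha_1)$). Your approach has the advantage of being uniform with the type A argument (Proposition \ref{PtypeAisok}) and of isolating exactly why the obstruction vanishes structurally (the connecting character is $\sigma_*$-fixed and the $\lambda$'s are permuted consistently); the paper's approach buys the reader a concrete solution with no cohomology, exploiting the especially simple shape of the $\SO(2\ell)$ cocharacter lattice — a shortcut that is not available in the Half-Spin case \ref{PtypeHalfSpinisok}, where direct coordinate work is still needed but is messier.
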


\begin{proof}
Let $ \roots $ be type D. Follow Plate IV \cite{bourbakiII}, so that $ \rank ( \roots ) = \ell $ and the ``ambient'' basis is $ \epsilon_1, \ldots, \epsilon_{\ell} $ and $ \simpleroots = \{ \alpha_1, \ldots, \alpha_{\ell} \} $ where $ \alpha_i = \epsilon_i - \epsilon_{i+1} $ for $ i = 1, \ldots, \ell - 1 $ and $ \alpha_{\ell} = \epsilon_{\ell-1} + \epsilon_{\ell} $. It happens that the fundamental weight $ \omega_1 $ from Plate IV (VI) \cite{bourbakiII} is $ \omega_1 = \epsilon_1 $. Define $ X $ to be the subgroup of the weight lattice generated by $ \roots $ and $ \omega_1 = \epsilon_1 $, which is clearly $ \myspan_{\Z} ( \epsilon_1, \ldots, \epsilon_{\ell} ) $. The group $ G $ with algebraic fundamental group $ X / Q $ is $ \SO ( 2 \ell ) $ and if $ \omega \in \Omega \cong \Z / 2 \Z $ is the non-trivial element then $ \sigma \defeq \prW ( \omega ) $ has disjoint cycle decomposition $ ( - \highestroot, \alpha_1 ) ( \alpha_{\ell-1}, \alpha_{\ell} ) $ as a permutation of $ \simpleaffinegradients $. Set $ n_{\finiteweylsubscript} \defeq \canonicalsection ( \sigma ) $. I may assume that the realization of $ \roots $ in $ G $ is a Chevalley Realization, in which case system (\ref{Efixersystem}) is
\begin{align*}
\alpha_{1} ( t ) &= \lambda_{1}^{-1} \cdot \lambda_{0} \cdot c ( n_{\finiteweylsubscript}, \alpha_{1} ) \\
\alpha_j ( t ) &= 1 \textaftermath{for all $ 2 \leq j \leq \ell - 2 $} \\
\alpha_{\ell-1} ( t ) &= \lambda_{\ell-1}^{-1} \cdot \lambda_{\ell} \cdot 1 \\
\alpha_{\ell} ( t ) &= \lambda_{\ell}^{-1} \cdot \lambda_{\ell-1} \cdot 1
\end{align*}
The cocharacter lattice $ X^{\vee} $ is $ X^{\vee} = \myspan_{\Z} ( \epsilon_1^{\vee}, \ldots, \epsilon_{\ell}^{\vee} ) $ and so the values $ \epsilon_i ( t ) $ may be chosen freely for $ t \in T ( \field ) $. Thus, a solution $ t $ is created by choosing $ \epsilon_{\ell} ( t ) = \lambda_{\ell}^{-1} \cdot \lambda_{\ell-1} $, $ \epsilon_{\ell-1} ( t ) = \cdots = \epsilon_2 ( t ) = 1 $, and $ \epsilon_1 ( t ) = \lambda_{1}^{-1} \cdot \lambda_{0} \cdot c ( n_{\finiteweylsubscript}, \alpha_{1} ) $.
\end{proof}

Finally, the following treats the only remaining class of $ G $ for which $ G_{\sss} $ is not adjoint and not simply-connected:

\begin{prop}[Theorem \ref{Tfixingrepresentative} for non-adjoint Type D, case 2] \label{PtypeHalfSpinisok}
If $ G_{\sss} $ is a ``Half-Spin Group''\footnote{The main challenge here is to avoid learning what a Half-Spin Group really is...} then the conclusion of Theorem \ref{Tfixingrepresentative} is true.
\end{prop}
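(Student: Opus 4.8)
The plan is to follow the template of Propositions~\ref{PtypeAisok} and~\ref{PtypeSOisok}: pass to $G = G_{\sss}$ and use the Cohomological Obstruction (Proposition~\ref{Pcohomologicaltest}) to reduce the claim to the vanishing of $\del[\lambda,\omega]$ for each $\omega \in \Omega$. First I would record the structural facts. A Half-Spin group exists only in type $D_\ell$ with $\ell$ even (for $\ell$ odd the center of the simply-connected group is cyclic of order $4$, so the only non-adjoint, non-simply-connected semisimple group of type $D_\ell$ is $\SO(2\ell)$, which is Proposition~\ref{PtypeSOisok}); hence I may assume $\roots$ is type $D_\ell$ with $\ell$ even, $Z(G_{\sss}) = \MU_2$, and $G_{\ad} = \SO(2\ell)/\{\pm 1\}$. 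By the reduction at the start of the proof of Theorem~\ref{Tfixingrepresentative} (semisimplification and Lang's Theorem) I may assume $G = G_{\sss}$; then $\Omega \cong \Z/2\Z$, and by Proposition~\ref{Pcohomologicaltest} it suffices to show $\del[\lambda,\omega] = 0$ in $H^1(\resfield,\MU_2) \cong \resfield^{\times}/(\resfield^{\times})^2$ for the nontrivial $\omega \in \Omega$ (the case $\omega = 1$ being vacuous).

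Next I would make the connecting map explicit. Let $\varpi$ be the half-spin fundamental weight, so $X^{*}(T_{\sss}) = Q + \Z\varpi$, $X^{*}(T_{\ad}) = Q$, and $2\varpi \in Q$. By Corollary~\ref{Cconnectingmapisaproductofroots} the restriction of $\del$ to $T_{\ad}(\resfield)$ is an algebraic character, and chasing a lift through $1 \to \MU_2 \to T_{\sss} \to T_{\ad} \to 1$ together with the Kummer isomorphism identifies it, modulo $(\resfield^{\times})^2$, with $\bar t \mapsto (2\varpi)(\bar t) = \prod_{i=1}^{\ell} \alpha_i(\bar t)^{c_i}$, where $2\varpi = \sum_{i=1}^{\ell} c_i \alpha_i$. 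Using Plate~IV (VI) \cite{bourbakiII} one computes $c_i = i$ for $1 \leq i \leq \ell - 2$, $c_{\ell - 1} = (\ell-2)/2$, $c_{\ell} = \ell/2$ (up to the mirror symmetry interchanging $\alpha_{\ell-1}$ and $\alpha_{\ell}$). The one observation I need to extract is: with $\alpha_R \in \simpleroots$ the unique simple root sent to $-\highestroot$ by $\sigma := \prW(\omega)$, the coefficient $c_R$ is \emph{even} and $c_{\sigma(i)} \equiv c_i \pmod 2$ for every simple $\alpha_i \neq \alpha_R$.

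Then I would combine this with system~\eqref{Efixersystem}. Since $G_{\ad}$ is adjoint, Theorem~\ref{Tfixingrepresentative} applied to $G_{\ad}$ produces $\boldsymbol{t}_{\ad} \in T_{\ad}(\resfield)$ with $\alpha_i(\boldsymbol{t}_{\ad}) = \lambda_i^{-1}\lambda_{\sigma(i)} c(n_{\finiteweylsubscript},\alpha_i)$ for $1 \leq i \leq \ell$, where $n_{\finiteweylsubscript} := \canonicalsection(\sigma)$ and the realization is taken Chevalley, and by definition $\del[\lambda,\omega] = \del(\boldsymbol{t}_{\ad})$. Reading off from Plate~IV (XII) \cite{bourbakiII} that $\sigma$ is a ``flip'' of the affine Dynkin diagram of $D_\ell$ — an involution of $\simpleaffinegradients$ with $\sigma(\alpha_0) = \alpha_R$ — and reindexing the $\lambda$-factors along this involution of $\{0,1,\ldots,\ell\}$, I would obtain, modulo $(\resfield^{\times})^2$,
\begin{equation*}
\del[\lambda,\omega] \;=\; \prod_{i=1}^{\ell} \bigl( \lambda_i^{-1}\lambda_{\sigma(i)} c(n_{\finiteweylsubscript},\alpha_i) \bigr)^{c_i} \;=\; \lambda_0^{c_R}\, \lambda_R^{-c_R} \prod_{\substack{1 \leq i \leq \ell \\ i \neq R}} \lambda_i^{\, c_{\sigma(i)} - c_i} \cdot \prod_{i=1}^{\ell} c(n_{\finiteweylsubscript},\alpha_i)^{c_i}.
\end{equation*}
Every exponent here is even: $c_R$ is even and $c_{\sigma(i)} - c_i$ is even for $i \neq R$ by the previous paragraph, while the Chevalley property forces $c(n_{\finiteweylsubscript},\alpha_i) = 1$ whenever $\alpha_i$ and $\sigma(\alpha_i)$ are both simple — which fails only for $i = R$ — so the last product is $c(n_{\finiteweylsubscript},\alpha_R)^{c_R} = (\pm 1)^{c_R} = 1$. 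Hence $\del[\lambda,\omega]$ is a square, i.e. $\del[\lambda,\omega] = 0$, and Proposition~\ref{Pcohomologicaltest} completes the argument.

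The main obstacle is the bookkeeping in the third step: one must pin down the involution $\sigma$ precisely. The half-spin coweight corresponds to one of the two order-two ``flips'' of the affine $D_\ell$ diagram, and which one — hence the value of $R$ and the exact parities to be checked — depends on $\ell \bmod 4$, reflecting the fact that $\Omega \subset \Omega_{\ad} \cong (\Z/2)^2$ is the annihilator of the class of $\varpi$ under the canonical pairing between $X^{*}(T_{\sss})/Q$ and $\Omega_{\ad}$, and this annihilator is generated by $\varpi^{\vee}$ when $4 \mid \ell$ and by the other spin coweight when $\ell \equiv 2 \pmod 4$. I would therefore verify the two parity statements in both cases separately — a short but genuinely root-system-dependent computation with the explicit coefficients $c_i$ above — rather than hope for a case-free argument.
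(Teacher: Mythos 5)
Your proof is correct, and it takes a genuinely different route from the paper's. The paper proves Proposition~\ref{PtypeHalfSpinisok} by direct construction: it picks the explicit $\Z$-basis $\omega_\ell, \alpha_2, \ldots, \alpha_\ell$ of the character lattice $X$, computes the dual basis $\beta_1, \ldots, \beta_\ell$ of $X^\vee$ in $\epsilon$-coordinates, and inverts system~\eqref{Efixersystem} equation by equation, isolating the one problematic equation (the one whose left side is $t_1^2 \cdot (\text{squares})$) and checking its right side is a square in each of the two parity cases $L$ even, $L$ odd. You instead reuse the cohomological machinery (Proposition~\ref{Pcohomologicaltest}, Lemma~\ref{Lallconnectingmapscomefromcharacters}, Corollary~\ref{Cconnectingmapisaproductofroots}) that the paper deploys only for Type~A in Proposition~\ref{PtypeAisok}: you compute the connecting character on $T_{\ad}(\resfield)$ as $2\omega_\ell$ via the Kummer description of lifts across $1 \to \MU_2 \to T_{\sss} \to T_{\ad} \to 1$, evaluate it on the solution $\boldsymbol{t}_{\ad}$ coming from Theorem~\ref{Tfixingrepresentative} for $G_{\ad}$, and then reduce $\del[\lambda,\omega] = 0$ to the parity facts that $c_R \in 2\Z$ and $c_{\sigma(i)} \equiv c_i \pmod 2$ for $i \neq R$. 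I checked these: with $2\omega_\ell = \sum_{i \le \ell - 2} i\alpha_i + (L-1)\alpha_{\ell-1} + L\alpha_\ell$ (where $\ell = 2L$), and with $\sigma$ pairing $\alpha_i$ with $\alpha_{\ell-i}$ and $R = \ell$ (when $4\mid\ell$) or $R = \ell - 1$ (when $\ell \equiv 2 \bmod 4$), the required parities hold in both branches. Your approach is more uniform — it handles the half-spin case by exactly the template the paper uses for $\SL_n/\MU_m$, needs no separate treatment of $\ell = 4$, and avoids the somewhat fussy dual-basis bookkeeping — at the modest cost of having to pin down the connecting character explicitly, a step the paper's direct construction sidesteps.
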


\begin{proof}
Let $ \roots $ be type D with rank $ \ell \in 2 \Z $, say $ \ell = 2 L $. By Proposition \ref{PtypeSOisok}, I may assume that $ \ell > 4 $. Follow Plate IV \cite{bourbakiII}, so that the ``ambient'' basis is $ \epsilon_1, \ldots, \epsilon_{\ell} $ and $ \simpleroots = \{ \alpha_1, \ldots, \alpha_{\ell-1}, \alpha_{\ell} \} $, where $ \alpha_i = \epsilon_i - \epsilon_{i+1} $ for $ 1 \leq i < \ell $ and $ \alpha_{\ell} = \epsilon_{\ell-1} + \epsilon_{\ell} $.

Set $ \omega_{\ell} = \frac{1}{2} ( \epsilon_1 + \cdots + \epsilon_{\ell} ) $, a fundamental weight, and define $ X $ be the subgroup of the weight lattice generated by $ \roots $ and $ \omega_{\ell} $. Let $ G $ be the group corresponding to $ X $, i.e. the quotient of $ \spin ( 2 \ell ) $ with whose algebraic fundamental group is $ X / Q $. By construction, $ Z ( G ) = \MU_2 $ and $ G $ is not isomorphic to $ \SO ( 2 \ell ) $, i.e. $ G $ is a ``half-spin group''.

First, determine a nice basis for $ X $. The identities $ \alpha_1 + 2 \alpha_2 + \cdots + ( \ell - 1 ) \alpha_{\ell-1} = 2 \omega_{\ell} - \ell \epsilon_{\ell} $ and $ L ( \alpha_{\ell} - \alpha_{\ell-1} ) = \ell \epsilon_{\ell} $ together imply that $ \alpha_1 \in \myspan_{\Z} ( \omega_{\ell}, \alpha_2, \ldots, \alpha_{\ell} ) $. This means that $ \omega_{\ell}, \alpha_2, \ldots, \alpha_{\ell} $ is a basis for the character lattice $ X $. Let $ \beta_1, \ldots, \beta_{\ell} $ be the basis dual to $ \omega_{\ell}, \alpha_2, \ldots, \alpha_{\ell} $, so that $ \langle \alpha_i, \beta_i \rangle = 1 = \langle \omega_{\ell}, \beta_1 \rangle $ for all $ i = 2, \ldots, \ell $ and all other pairings are $ 0 $. This is a basis for the cocharacter lattice $ X^{\vee} $. It can be computed that, in $ \epsilon $-coordinates, 
\begin{align*}
\beta_1 &= ( 2, 0, \ldots, 0 ) \\
\beta_j &= ( - j + 1, 1, \ldots, 1, 0, \ldots, 0 ) \textaftermath{for all $ 2 \leq j \leq \ell - 2 $} \\
\beta_{\ell-1} &= \left( - \frac{\ell-3}{2}, \frac{1}{2}, \ldots, \frac{1}{2}, - \frac{1}{2} \right) \\
\beta_{\ell} &= \left( - \frac{\ell-1}{2}, \frac{1}{2}, \ldots, \frac{1}{2}, \frac{1}{2} \right) \\
\end{align*}
\emph{For $ 2 \leq j \leq \ell - 2 $, the $ \epsilon_i $-coefficient of $ \beta_j $ is $ 0 $ if and only if $ i > j $.}

Let $ \omega \in \Omega \cong \Z / 2 \Z $ be the non-trivial element and set $ \sigma \defeq \prW ( \omega ) $. How $ \sigma $ permutes $ \simpleaffinegradients $ depends on the parity of $ L $, but ignore this for now. For each $ i = 0, 1, \ldots, \ell $ denote by $ \sigma ( i ) $ the $ j $ such that $ \sigma ( \alpha_i ) = \alpha_j $. Set $ n_{\finiteweylsubscript} := \canonicalsection ( \sigma ) $, the canonical representative. I may assume that the realization of $ \roots $ in $ G $ is a Chevalley Realization.

The split maximal torus $ T \subset G $ is recovered by $ X^{\vee} \otimes_{\Z} \field^{\times} \directedisom T ( \field ) $ and arbitrary $ t \in T ( \field ) $ is represented by $ t = \beta_1 ( t_1 ) \cdots \beta_{\ell} ( t_{\ell} ) $ for arbitrary $ t_i \in \field^{\times} $. By definition of the dual basis, $ \alpha_i ( t ) = t_i $ for $ i \neq 1 $ and so all equations in system (\ref{Efixersystem}) are solvable except possibly the first. Since $ \alpha_1 ( t ) = \prod_i t_i^{\langle \alpha_1, \beta_i \rangle} $ and $ \langle \alpha_1, \beta_1 \rangle = 2 $, that first equation is
\begin{equation} \label{Ehalfspinequation1}
t_1^2 \cdot \prod_{i=2}^{\ell} t_i^{\langle \alpha_1, \beta_i \rangle} = \alpha_1 ( t ) \iseq \lambda_1^{-1} \cdot \lambda_{\sigma ( 1 )} \cdot c ( n_{\finiteweylsubscript}, \alpha_1 )
\end{equation}

Since 
\begin{itemize}
\item system (\ref{Efixersystem}) prescribes the value $ t_j = \alpha_j ( t ) = \lambda_j^{-1} \cdot \lambda_{\sigma ( j )} \cdot c ( n_{\finiteweylsubscript}, \alpha_j ) $ for all $ j > 1 $,

\item $ c ( n_{\finiteweylsubscript}, \alpha_j ) = 1 $ for all $ 2 \leq j \leq \ell - 2 $ due to the fact that $ \sigma ( \alpha_j ) = \alpha_{\ell-j} \in \simpleroots $ for all $ 2 \leq j \leq \ell - 2 $ regardless of $ \omega $ by Plate IV (XII) \cite{bourbakiII}, and

\item $ \langle \alpha_1, \beta_j \rangle = - j $ for all $ 2 \leq j \leq \ell - 2 $,
\end{itemize}
I conclude further that $ t_j^{\langle \alpha_1, \beta_j \rangle} \cdot t_{\ell-j}^{\langle \alpha_1, \beta_{\ell-j} \rangle} $ is a \emph{square} for all $ 2 \leq j \leq \ell - 2 $. Thus, equation (\ref{Ehalfspinequation1}) can be replaced by
\begin{equation} \label{Ehalfspinequation2}
t_1^2 \cdot t_{\ell-1}^{\langle \alpha_1, \beta_{\ell-1} \rangle} \cdot t_{\ell}^{\langle \alpha_1, \beta_{\ell} \rangle} \iseq \lambda_1^{-1} \cdot \lambda_{\sigma ( 1 )} \cdot c ( n_{\finiteweylsubscript}, \alpha_1 )
\end{equation}

Since
\begin{itemize}
\item $ \langle \alpha_1, \beta_{\ell-1} \rangle = - ( \ell - 2 ) / 2 = - L + 1 $,

\item $ \langle \alpha_1, \beta_{\ell} \rangle = - \ell / 2 = - L $, and

\item $ c ( n_{\finiteweylsubscript}, \alpha_1 ) = 1 $ due to the fact that $ \sigma ( \alpha_1 ) \in \simpleroots $ regardless of $ \omega $,
\end{itemize}
equation (\ref{Ehalfspinequation2}) reduces to
\begin{equation} \label{Ehalfspinequation3}
t_1^2 \cdot ( \lambda_{\ell-1}^{-1} \cdot \lambda_{\sigma ( \ell-1 )} \cdot c ( n_{\finiteweylsubscript}, \alpha_{\ell-1} ) )^{- L + 1 } \cdot ( \lambda_{\ell}^{-1} \cdot \lambda_{\sigma ( \ell )} \cdot c ( n_{\finiteweylsubscript}, \alpha_{\ell} ) )^{- L} \iseq \lambda_1^{-1} \cdot \lambda_{\sigma ( 1 )}
\end{equation}

To fully understand (\ref{Ehalfspinequation3}), it is necessary finally to know how $ \sigma $ permutes $ \simpleaffinegradients $, and this depends on the parity of $ L $. If $ L \in 2 \Z $ then it is clear that the fundamental coweight $ \omega_{\ell}^{\vee} = ( \frac{1}{2}, \ldots, \frac{1}{2}, \frac{1}{2} ) $, in $ \epsilon^{\vee} $-coordinates, is a $ \Z $-combination of $ \beta_1 $ and $ \beta_{\ell} $, i.e. $ \omega_{\ell}^{\vee} \in X^{\vee} $. If $ L \notin 2 \Z $ then it is clear that $ \omega_{\ell-1}^{\vee} = ( \frac{1}{2}, \ldots, \frac{1}{2}, - \frac{1}{2} ) $ is a $ \Z $-combination of $ \beta_1 $ and $ \beta_{\ell-1} $, i.e. $ \omega_{\ell-1}^{\vee} \in X^{\vee} $. By Plate IV (XII) \cite{bourbakiII}, this means that if $ L \in 2 \Z $ then the permutation of $ \simpleaffinegradients $ by $ \sigma $ has the disjoint cycle decomposition $ ( - \highestroot, \alpha_{\ell} ) ( \alpha_1, \alpha_{\ell-1} ) \cdots ( \alpha_j, \alpha_{\ell - j} ) \cdots $, and if $ L \notin 2 \Z $ then the permutation of $ \simpleaffinegradients $ by $ \sigma $ has the disjoint cycle decomposition $ ( - \highestroot, \alpha_{\ell-1} ) ( \alpha_1, \alpha_{\ell} ) \cdots ( \alpha_j, \alpha_{\ell - j} ) \cdots $.

Thus, if $ L \in 2 \Z $ then the equation that must be solved is
\begin{align*}
t_1^2 &{\iseq} \lambda_1^{-1} \cdot \lambda_{\ell-1} \cdot ( \lambda_{\ell-1}^{-1} \cdot \lambda_{1} )^{L-1} \cdot ( \lambda_{\ell}^{-1} \cdot \lambda_{0} \cdot c ( n_{\finiteweylsubscript}, \alpha_{\ell} ) )^{L} \\
&{\iseq} ( \lambda_{\ell-1}^{-1} \cdot \lambda_{1} )^{L-2} \cdot ( \lambda_{\ell}^{-1} \cdot \lambda_{0} \cdot c ( n_{\finiteweylsubscript}, \alpha_{\ell} ) )^{L}
\end{align*}
while if $ L \notin 2 \Z $ then the equation that must be solved is
\begin{align*}
t_1^2 &{\iseq} \lambda_1^{-1} \cdot \lambda_{\ell} \cdot ( \lambda_{\ell-1}^{-1} \cdot \lambda_{0} \cdot c ( n_{\finiteweylsubscript}, \alpha_{\ell-1} ) )^{L-1} \cdot ( \lambda_{\ell}^{-1} \cdot \lambda_{1} )^{L} \\
&{\iseq} ( \lambda_{\ell-1}^{-1} \cdot \lambda_{0} \cdot c ( n_{\finiteweylsubscript}, \alpha_{\ell-1} ) )^{L-1} \cdot ( \lambda_{\ell}^{-1} \cdot \lambda_{1} \cdot c ( n_{\finiteweylsubscript}, \alpha_{\ell} ) )^{L-1}
\end{align*}
In each case, the right-hand-side is a square, which shows that a solution $ t \in T ( \field ) $ to system (\ref{Efixersystem}) exists.
\end{proof}

For completeness, I formally update the Theorem to include these cases:

\begin{theorem}[Update of Theorem \ref{Tfixingrepresentative}] \label{Tupdatedtheorem}
With no assumptions on $ G $ beyond those from \S\ref{Snotation}, there exists a section $ \goodsectionext $ of the canonical map $ N_G(T)(\field) \rightarrow W $ such that $ \goodsectionext ( \Omega ) \subset \fixer_{G_{\barycenter}} ( \lambda ) $.
\end{theorem}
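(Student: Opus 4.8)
The plan is to reduce the statement to Theorem \ref{Tfixingrepresentative} together with the three special cases already treated in Propositions \ref{PtypeAisok}, \ref{PtypeSOisok}, \ref{PtypeHalfSpinisok}, after stripping away the non-semisimple and simply-connected degeneracies.

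First, the patching. A section of $ N_G(T)(\field) \rightarrow W $ is only required to be a set-theoretic section, and the conclusion constrains only its restriction to $ \Omega $; so it suffices to produce, for each $ \omega \in \Omega $, a single representative lying in $ \fixer_{G_{\barycenter}}(\lambda) $, and then to extend $ \goodsectionext $ over the rest of $ W $ arbitrarily. Let $ \Omega' \subseteq \Omega $ be the set of such $ \omega $. Because $ \fixer_{G_{\barycenter}}(\lambda) $ is a subgroup of $ G(\field) $ and the product (resp. inverse) of representatives represents the product (resp. inverse), $ \Omega' $ is a subgroup. Moreover $ \ker(\prW\vert_{\Omega}) \subseteq \Omega' $: such an $ \omega $ is a pure translation by some $ \mu \in \cochargroup(T) $ with $ \langle a, \mu \rangle = 0 $ for all $ a \in \roots $, so $ \mu(\uniformizer) \in \bigcap_{a \in \roots} \kernel(a : T \rightarrow \Gm)(\field) = Z(G)(\field) \subseteq \fixer_{G_{\barycenter}}(\lambda) $ is such a representative. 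Since $ \Omega / \ker(\prW\vert_{\Omega}) \cong \prW(\Omega) $ is precisely the alcove-stabilizer group of $ G_{\sss} $, it remains to show that $ \Omega' $ surjects onto this quotient.

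Second, the reduction to $ G = G_{\sss} $. The map $ N_G(T)(\field) \rightarrow N_{G_{\sss}}(T_{\sss})(\field) $ is surjective, its kernel being the split torus $ Z(G)^{\circ} $, and both the action of $ G_{\barycenter} $ on $ \mathbf{V}_{\barycenter} $ and the functional $ \lambda $ depend on an element of $ N_G(T)(\field) $ only through its image in $ N_{G_{\sss}}(T_{\sss})(\field) $ (the roots and the scalars $ c(\emptyargument, a) $ see only $ G_{\sss} $). Hence a representative of $ \omega_{\sss} \in \Omega_{\sss} $ in the fixer for $ G_{\sss} $ lifts, after correcting by an element of $ \ker(\prW\vert_{\Omega}) \cdot Z(G)(\field) $, to a representative of the prescribed $ \omega \in \Omega $ in the fixer for $ G $ — this is the mechanism already used inside the proof of Theorem \ref{Tfixingrepresentative}. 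So I may assume $ G = G_{\sss} $; then $ Z(G) $ is cyclic because $ \roots $ is irreducible, and Proposition \ref{Pcohomologicaltest} reduces the question to the vanishing of $ \del[\lambda, \omega] $. Third, the case analysis: with $ G = G_{\sss} $ and $ \roots $ irreducible, $ G $ is adjoint, simply-connected, isomorphic to $ \SL_n / \MU_m $ with $ m \mid n $, or of type D with $ Z(G) = \MU_2 $ — the last meaning $ G = \SO(2\ell) $ or a half-spin group. If $ G $ is adjoint this is Theorem \ref{Tfixingrepresentative}; if $ G $ is simply-connected then $ \cochargroup(T) = Q^{\vee} $, so $ \Omega $ is trivial and there is nothing to prove; the three remaining families are Propositions \ref{PtypeAisok}, \ref{PtypeSOisok}, \ref{PtypeHalfSpinisok}. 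As these possibilities are exhaustive, $ \Omega' = \Omega $, which completes the proof.

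I do not expect a genuinely new obstacle: every hard computation lives in the cited results, and what remains is bookkeeping. The one point that merits care is the reduction of the second step — verifying that a representative lifted from $ G_{\sss} $ and then corrected by $ \ker(\prW\vert_{\Omega}) \cdot Z(G)(\field) $ really represents the originally chosen $ \omega $ and still lies in $ \fixer_{G_{\barycenter}}(\lambda) $ — but this uses nothing beyond surjectivity of $ N_G(T)(\field) \rightarrow N_{G_{\sss}}(T_{\sss})(\field) $ and the fact that $ \fixer_{G_{\barycenter}}(\lambda) $ is a group containing $ Z(G)(\field) $. A secondary, purely combinatorial point is confirming that the isogeny-type list above is complete for irreducible root systems.
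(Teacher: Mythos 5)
Your proposal is correct and matches the plan the paper relies on implicitly: the paper states Theorem~\ref{Tupdatedtheorem} without proof, as a ``formal update'' consolidating Theorem~\ref{Tfixingrepresentative} (the case $G_{\sss}$ adjoint), the remark that the simply-connected case is vacuous, and Propositions~\ref{PtypeAisok}, \ref{PtypeSOisok}, \ref{PtypeHalfSpinisok}, and you are simply supplying the bookkeeping (that $\Omega'$ is a subgroup containing $\ker(\prW\vert_\Omega)$, the reduction to $G_{\sss}$, and the exhaustiveness of the isogeny list) that the paper leaves to the reader. One small wrinkle worth flagging: the parenthetical ``then $Z(G)$ is cyclic because $\roots$ is irreducible'' is false as stated --- for simply-connected type $D_{2k}$ one has $Z(G_{\sss}) \cong \MU_2 \times \MU_2$ --- and Proposition~\ref{Pcohomologicaltest} genuinely requires cyclicity. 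The argument survives only because the simply-connected case is dispatched separately a moment later (there $\Omega$ is trivial so there is nothing to reduce), so the clean ordering is to peel off the simply-connected case \emph{before} invoking cyclicity and Proposition~\ref{Pcohomologicaltest}, exactly as the paper implicitly does in the paragraph preceding Proposition~\ref{PtypeAisok}.
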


\subsection{Some last refinements} \label{SSrefinements}

In this subsection, the material from \S\ref{SSgoodrepresentativesadjointcase} and \S\ref{SSgoodrepresentativesnonadjointcase} is combined to give an explicit presentation of $ \fixer_{G_{\barycenter}} ( \lambda ) $.

\begin{prop} \label{Pproductdecompofcharacterfixer}
$ \fixer_{G_{\barycenter}} ( \lambda ) = \goodsectionext ( \Omega ) \cdot Z_c \cdot \prounipotentiwahori $.
\end{prop}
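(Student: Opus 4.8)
The plan is to prove the two inclusions separately; the inclusion $\supseteq$ is immediate, and the inclusion $\subseteq$ is where the real content lies. Since $\fixer_{G_{\barycenter}}(\lambda)$ is a subgroup of $G_{\barycenter}$, for $\supseteq$ it is enough to see that each of the three factors is contained in it. We have $\goodsectionext(\Omega)\subseteq\fixer_{G_{\barycenter}}(\lambda)$ by Theorem \ref{Tupdatedtheorem}; $\compactcenter\subseteq Z(G)(\field)\subseteq\fixer_{G_{\barycenter}}(\lambda)$ because $Z(G)(\field)$ fixes $\barycenter$ and acts trivially on $\mathbf{V}_{\barycenter}$ by conjugation; and $\prounipotentiwahori\subseteq\fixer_{G_{\barycenter}}(\lambda)$ because $\prounipotentiwahori$ fixes $\barycenter$ while, by the Moy--Prasad commutator inclusions, $[\prounipotentiwahori,\prounipotentiwahori]\subseteq J_2$, so $\prounipotentiwahori$ acts trivially on $\mathbf{V}_{\barycenter}=J_1/J_2$ and hence fixes $\lambda$.

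For the reverse inclusion, I would first decompose $G_{\barycenter}$ via Lemma \ref{Lbasicdecompofpointfixer}, applied to the section $f:=\goodsectionext\vert_{\Omega}$ of $\kappa$ (recall from \S\ref{SSadditionalnotation} that $\goodsectionext\vert_{\Omega}$ is a section of $\kappa$ under the identification $\kottwitzhomcodomain\directedisom\Omega$), which gives $G_{\barycenter}=\goodsectionext(\Omega)\cdot\iwahorisubgroup$. Thus an arbitrary $g\in\fixer_{G_{\barycenter}}(\lambda)$ may be written $g=\goodsectionext(\omega)\cdot x$ with $\omega\in\Omega$ and $x\in\iwahorisubgroup$, and since $\goodsectionext(\omega)\in\fixer_{G_{\barycenter}}(\lambda)$ it follows that $x\in\fixer_{G_{\barycenter}}(\lambda)\cap\iwahorisubgroup$. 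Hence it remains only to prove $\fixer_{G_{\barycenter}}(\lambda)\cap\iwahorisubgroup\subseteq\compactcenter\cdot\prounipotentiwahori$.

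Here I would use the standard factorization $\iwahorisubgroup=\compacttorus\cdot\prounipotentiwahori$ (equivalently, the image of $\compacttorus$ in $\iwahorisubgroup/\prounipotentiwahori\cong\specialfiber{T}(\resfield)$ is everything): write $x=t\cdot u$ with $t\in\compacttorus$ and $u\in\prounipotentiwahori$, so that $t\cdot\lambda=x\cdot\lambda=\lambda$ by triviality of the $\prounipotentiwahori$-action. Since $t$ acts on the line $\mathbf{V}_{\barycenter}(\alpha)$ by the scalar $\alpha(t)$, and affine genericity of $\lambda$ gives $\lambda\vert_{\mathbf{V}_{\barycenter}(\alpha)}\neq 0$ for every $\alpha\in\simpleaffinegradients$, we obtain $\alpha(t)\equiv 1\pmod{\uniformizer}$ for all $\alpha\in\simpleaffinegradients$, and therefore for all $\alpha\in\roots$ since every root is an integral combination of $\simpleroots$. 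Writing $\bar t\in\specialfiber{T}(\resfield)$ for the reduction of $t$, this says $\bar t\in\bigcap_{\alpha\in\roots}\kernel(\bar\alpha)(\resfield)=Z(\specialfiber{G})(\resfield)$ (the center of a split reductive group being the common kernel of its roots). If $\bar t$ lifts to some $z\in\compactcenter$, then $z^{-1}t\in\compacttorus$ has trivial reduction, hence $z^{-1}t\in\onemodptorus\subseteq\prounipotentiwahori$, and $x=z\cdot(z^{-1}tu)\in\compactcenter\cdot\prounipotentiwahori$, which finishes the argument.

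The main obstacle, then, is the surjectivity of the reduction map $\compactcenter\to Z(\specialfiber{G})(\resfield)$. I would deduce it from the structure of the center of the Chevalley model: after identifying $\compactcenter$ with $Z(\integralmodel{G})(\integers)$ (using the valuative criterion for the finite part of $Z(\integralmodel{G})$ and the connected $\integers$-model for the central torus $Z(G)^{\circ}$), decompose $Z(\integralmodel{G})$, a diagonalizable $\integers$-group with character group $\chargroup(T)/Q$, according to the free part, the prime-to-$p$ torsion, and the $p$-power torsion of $\chargroup(T)/Q$, where $p:=\mychar(\resfield)$. The free part contributes the central torus, whose $\integers$-points surject onto its $\resfield$-points by smoothness and Hensel's lemma; the prime-to-$p$ part is étale over $\integers$, so is again surjective on points; and the $p$-power part has trivial group of $\resfield$-points since $\resfield^{\times}$ contains no nontrivial $p$-torsion. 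Everything else in the proof is formal, given the standard facts about Iwahori subgroups, the Moy--Prasad filtration at a barycenter, and the affine generic condition.
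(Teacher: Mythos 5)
Your proof is correct and follows essentially the same route as the paper's: decompose $G_{\barycenter}=\goodsectionext(\Omega)\cdot\iwahorisubgroup$ via Lemma~\ref{Lbasicdecompofpointfixer}, apply the Iwahori factorization to reduce to $t\in\compacttorus$, use affine genericity to conclude $\bar t\in Z(\specialfiber{G})(\resfield)$, and lift via surjectivity of reduction on the center. The only difference is that you unpack the surjectivity of $\compactcenter\to Z(\specialfiber{G})(\resfield)$ in more detail (free/prime-to-$p$/$p$-power decomposition of $\chargroup(T)/Q$) where the paper merely cites Hensel's Lemma plus injectivity of Frobenius in a footnote; both arguments are sound.
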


Recall from \S\ref{SSadditionalnotation} that $ \integralmodel{G} $ is an $ \integers $-model of $ G $ and $ \specialfiber{G} = \integralmodel{G} \otimes_{\integers} \resfield $.

\begin{proof}
As remarked previously, it is clear that $ Z_c, \prounipotentiwahori \subset \fixer_{G_{\barycenter}} ( \lambda ) $, and $ \goodsectionext ( \Omega ) \subset \fixer_{G_{\barycenter}} ( \lambda ) $ by definition of $ \goodsectionext $. Thus, $ \fixer_{G_{\barycenter}} ( \lambda ) \supset \mathcal{S} ( \Omega ) \cdot Z_c \cdot \prounipotentiwahori $. For the reverse inclusion, let $ g \in \fixer_{G_{\barycenter}} ( \lambda ) $ be arbitrary. By Lemma \ref{Lbasicdecompofpointfixer}, there are $ \omega \in \Omega $ and $ x \in \iwahorisubgroup $ such that $ g = \goodsectionext ( \omega ) \cdot x $. By the Iwahori Factorization for $ \iwahorisubgroup $, there is $ t \in \compacttorus $ and $ u \in \prounipotentiwahori $ (since $ \prounipotentiwahori $ and $ \iwahorisubgroup $ contain the same root subgroups) such that $ x = t \cdot u $. Altogether $ g = \goodsectionext ( \omega ) \cdot t \cdot u $ and it suffices to show only that $ t \in \compactcenter \cdot \prounipotentiwahori $. Note that $ t \in \compacttorus \cap \fixer_{G_{\barycenter}} ( \lambda ) $, since it is known already that $ \goodsectionext ( \Omega ), \prounipotentiwahori \subset \fixer_{G_{\barycenter}} ( \lambda ) $. Recall the weight decomposition $ \mathbf{V}_{\barycenter} = \bigoplus_{\alpha \in \simpleaffinegradients} \mathbf{V}_{\barycenter} ( \alpha ) $. Since ``affine generic character'' means that $ \lambda $ is non-zero on each line $ \mathbf{V}_{\barycenter} ( \alpha ) $, the fact that $ t \in \compacttorus \cap \fixer_{G_{\barycenter}} ( \lambda ) $ implies that $ \alpha ( t ) \equiv 1 \text{ mod } \uniformizer $ for all $ \alpha \in \simpleroots $. Thus, if $ \boldsymbol{t} \in \specialfiber{G} ( \resfield ) $ denotes the image of $ t \in \compacttorus = \integralmodel{T} ( \integers ) $ then $ \boldsymbol{t} \in Z ( \specialfiber{G} ) ( \resfield ) $. Since $ Z ( \integralmodel{G} ) $ is a split diagonalizable group, Hensel's Lemma\footnote{The function $ \MU_n ( \integers ) \rightarrow \MU_n ( \resfield ) $ is surjective without any restriction on $ n $ (use injectivity of Frobenius on $ \resfield $ and Hensel's Lemma).} implies that $ Z ( \integralmodel{G} ) ( \integers ) \rightarrow Z ( \integralmodel{G} ) ( \resfield ) = Z ( \specialfiber{G} ) ( \resfield ) $ is surjective. Since the congruence subgroup $ \kernel ( \integralmodel{G} ( \integers ) \rightarrow \specialfiber{G} ( \resfield ) ) $ is contained in, say, $ \prounipotentiwahori $, this means that there are $ z \in Z ( \integralmodel{G} ) ( \integers ) $ and $ u \in \prounipotentiwahori $ such that $ t = z \cdot u $. Since $ \compacttorus, \prounipotentiwahori \subset G_1 $, this implies that $ z \in \compactcenter $ so the claim follows and therefore $ \fixer_{G_{\barycenter}} ( \lambda ) \subset \mathcal{S} ( \Omega ) \cdot Z_c \cdot \prounipotentiwahori $.
\end{proof}

By Proposition 2.4(1) \cite{RY}, $ \fixer_{G_{\barycenter}} ( \lambda ) $ is an inducing subgroup for the simple supercuspidals constructed from $ \lambda $, so it is important to know that it is open and compact-mod-center. Any subgroup $ \mathcal{J} \subset G ( \field ) $ that is both open and compact-mod-center necessarily contains a compact open subgroup $ J = \mathcal{J} \cap G^1 $ which is ``largest'' in the sense that it contains all other compact subgroups of $ \mathcal{J} $. For $ \fixer_{G_{\barycenter}} ( \lambda ) $, this largest compact subgroup can be presented succinctly in terms of the factorization from Proposition \ref{Pproductdecompofcharacterfixer} above:

\begin{corollary}\label{Cfactorizationoflargestcompact}
$ \fixer_{G_{\barycenter}} ( \lambda ) $ is open and compact-mod-center and if $ J \subset \fixer_{G_{\barycenter}} ( \lambda ) $ denotes the largest compact open subgroup then $ J = \goodsectionext ( \Omega_{\tor} ) \cdot Z_c \cdot \prounipotentiwahori $, where $ \Omega_{\tor} \subset \Omega $ is the torsion subgroup.
\end{corollary}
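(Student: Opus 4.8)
The plan is to reduce the statement to Proposition~\ref{Pproductdecompofcharacterfixer} together with the behaviour of the Kottwitz homomorphism $\kappa$ on $\Omega$. First I would check the two hypotheses needed to invoke the general fact recalled just before the statement, which then produces the largest compact open subgroup as $J = \fixer_{G_{\barycenter}}(\lambda) \cap G^1$. Openness is immediate, since $\prounipotentiwahori$ is a compact open subgroup of $G(\field)$ contained in $\fixer_{G_{\barycenter}}(\lambda)$. For the compact-mod-center property, note that $\fixer_{G_{\barycenter}}(\lambda) \subseteq G_{\barycenter}$ and that the image of $G_{\barycenter}$ under $G(\field) \rightarrow G_{\ad}(\field)$ fixes $\barycenter \in \building_{\ad}$, hence is contained in the stabilizer of a point of the Bruhat-Tits building of $G_{\ad}$; that stabilizer is a bounded (so compact, since $G_{\ad}$ is semisimple) subgroup of $G_{\ad}(\field)$. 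Therefore $G_{\barycenter}$, and a fortiori $\fixer_{G_{\barycenter}}(\lambda)$, is compact-mod-center.

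Next I would assemble the relevant Kottwitz-homomorphism facts from \S\ref{SSadditionalnotation}. By definition of $G^1$ one has $g \in G^1 \iff \kappa(g) \in \kottwitzhomcodomain_{\tor}$. The restriction of $\kappa$ to $N_G(T)(\field)$ descends to $W \rightarrow \kottwitzhomcodomain$, the composite $\Omega \hookrightarrow W \rightarrow \kottwitzhomcodomain$ is an isomorphism (carrying $\Omega_{\tor}$ bijectively onto $\kottwitzhomcodomain_{\tor}$), and $\goodsectionext|_{\Omega}$ is a section of $\kappa$; consequently $\goodsectionext(\omega) \in G^1$ if and only if $\omega \in \Omega_{\tor}$. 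Moreover $\compactcenter = G_1 \cap Z(G)(\field)$ and $\prounipotentiwahori \subseteq \iwahorisubgroup$ both lie inside $G_1 = \kernel(\kappa) \subseteq G^1$.

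Then I would compute $J = \fixer_{G_{\barycenter}}(\lambda) \cap G^1$ directly from Proposition~\ref{Pproductdecompofcharacterfixer}. For the inclusion $\supseteq$: each of $\goodsectionext(\Omega_{\tor})$, $\compactcenter$, $\prounipotentiwahori$ lies in $\fixer_{G_{\barycenter}}(\lambda)$ (by Theorem~\ref{Tupdatedtheorem} and the opening remarks in the proof of Proposition~\ref{Pproductdecompofcharacterfixer}) and in $G^1$ (previous paragraph), hence each lies in the group $J$, and therefore so does their product. For the inclusion $\subseteq$: given $g \in J$, write $g = \goodsectionext(\omega) \cdot t \cdot u$ with $\omega \in \Omega$, $t \in \compactcenter$, $u \in \prounipotentiwahori$ using Proposition~\ref{Pproductdecompofcharacterfixer}; since $t, u \in \kernel(\kappa)$ we get $\kappa(g) = \kappa(\goodsectionext(\omega))$, and $g \in G^1$ forces this to lie in $\kottwitzhomcodomain_{\tor}$, whence $\omega \in \Omega_{\tor}$ and $g \in \goodsectionext(\Omega_{\tor}) \cdot \compactcenter \cdot \prounipotentiwahori$. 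This gives the desired equality (and, since $J$ is a group, shows the right-hand product is a group).

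I expect the only genuinely delicate points to be the compact-mod-center assertion and the precise statement of the general fact that an open compact-mod-center subgroup $\mathcal{J}$ has a unique maximal compact open subgroup equal to $\mathcal{J} \cap G^1$ — both of which are standard Bruhat-Tits theory, the latter resting on the observation that any compact subgroup of $G(\field)$ has $\kappa$-image in $\kottwitzhomcodomain_{\tor}$. Once these are in place, the identification of $J$ is a short bookkeeping exercise with $\kappa$ and requires no new computation beyond Proposition~\ref{Pproductdecompofcharacterfixer}.
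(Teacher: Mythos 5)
Your proof is correct, and it follows a genuinely different (and arguably cleaner) route than the paper's own argument, so a comparison is worthwhile.

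For the \emph{compact-mod-center} claim, the paper does not pass through the building of $G_{\ad}$; it instead exhibits the obviously open, compact-mod-center subgroup $H := Z(G)(\field) \cdot \prounipotentiwahori$ inside $\fixer_{G_{\barycenter}}(\lambda)$, and then shows $H$ has finite index by combining $\goodsectionext(\omega)^n \in T(\field)$, finite index of $Z(G)(\field) \cdot G_1$ in $G(\field)$, and finiteness of $\compacttorus/\onemodptorus$. Your argument --- that $\fixer_{G_{\barycenter}}(\lambda) \subset G_{\barycenter}$, which maps into the (compact) stabilizer of $\barycenter$ in $G_{\ad}(\field)$ --- is a perfectly good alternative; note only that to conclude compactness of the quotient you should observe explicitly that $\fixer_{G_{\barycenter}}(\lambda)$ contains $Z(G)(\field)$ and is closed in $G(\field)$, so that its image in $G_{\ad}(\field)$ is a \emph{closed} subgroup of a compact stabilizer and not merely a subset of one.

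For the \emph{identification of $J$}, your argument is shorter and more transparent. The paper establishes $J \subset \goodsectionext(\Omega_{\tor}) \cdot \compactcenter \cdot \prounipotentiwahori$ by a compactness-plus-pigeonhole argument producing $N$ with $\goodsectionext(\omega)^N \in \compactcenter \cdot \prounipotentiwahori$, then deducing $\goodsectionext(\omega)^N \in N_G(T)(\field) \cap G_1 \cap G_{\barycenter} = \compacttorus$ and hence $\omega^N = 1_{\Omega}$; it establishes the reverse inclusion by showing $\goodsectionext(\omega)^{|\omega|} \in \compacttorus \subset G^1$, so $\goodsectionext(\omega)^{|\omega|} \in J$, and then a normalizer argument to get $\goodsectionext(\omega) \in J$ itself. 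Your version bypasses both of these by applying $\kappa$ directly: since $\compactcenter, \prounipotentiwahori \subset G_1 = \ker(\kappa)$, the factorization $g = \goodsectionext(\omega) \cdot t \cdot u$ from Proposition~\ref{Pproductdecompofcharacterfixer} gives $\kappa(g) = \kappa(\goodsectionext(\omega))$, and since $\kappa$ restricted to $\Omega$ is an isomorphism onto $\kottwitzhomcodomain$, membership of $g$ in $G^1$ (i.e. $\kappa(g) \in \kottwitzhomcodomain_{\tor}$) is exactly equivalent to $\omega \in \Omega_{\tor}$. This simultaneously handles both inclusions, and it leans harder on the general fact --- which the paper states just before the corollary but then exploits only for the reverse inclusion --- that $J = \fixer_{G_{\barycenter}}(\lambda) \cap G^1$. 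What the paper's pigeonhole route buys in exchange is independence from that general fact for the forward inclusion; what your route buys is brevity and the avoidance of the normalizer step. Both are correct.
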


%The first claim likely follows from general facts about fixers of points in buildings, but it is also easy to give a direct proof.

\begin{proof}
Let $ H $ be the subgroup $ Z ( G ) ( \field ) \cdot \prounipotentiwahori \subset \fixer_{G_{\barycenter}} ( \lambda ) $, which is obviously open and compact-mod-center. Set $ n := \# \Omega_{\ad} $. Since $ \goodsectionext ( \omega )^n \in T ( \field ) $ for all $ \omega \in \Omega $ and since $ Z ( G ) ( \field ) \cdot G_1 $ is a finite-index normal subgroup of $ G ( \field ) $, there is $ N > 0 $ such that $ \goodsectionext ( \omega )^N \in T ( \field ) \cap ( Z ( G ) ( \field ) \cdot G_1 ) $ for all $ \omega \in \Omega $. It follows immediately that $ \goodsectionext ( \omega )^N \in Z ( G ) ( \field ) \cdot \compacttorus $ for all $ \omega \in \Omega $. Since $ \Omega $ is finitely-generated and abelian, since $ \goodsectionext $ is homomorphic modulo $ \compacttorus $, since $ \compacttorus / \onemodptorus $ is finite, and since $ \onemodptorus \subset H $, the previous fact implies that $ H $ is a finite-index subgroup and therefore that $ \fixer_{G_{\barycenter}} ( \lambda ) $ is also open and compact-mod-center.

Let $ g \in J $ be arbitrary. By Proposition \ref{Pproductdecompofcharacterfixer}, there are $ \omega \in \Omega $, $ z \in Z_c $, $ u \in \prounipotentiwahori $ such that $ g = \goodsectionext ( \omega ) \cdot z \cdot u $, and the claim is that $ \omega \in \Omega_{\tor} $. Since $ Z_c \cdot \prounipotentiwahori $ is certainly a compact open subgroup, $ Z_c \cdot \prounipotentiwahori \subset J $. This implies that $ \goodsectionext ( \omega )^n \in J $ for all $ n \in \N $. The cosets of $ J $ modulo $ Z_c \cdot \prounipotentiwahori $ constitute an open cover, so compactness of $ J $ (and the Pigeonhole Principle) implies that $ \goodsectionext ( \omega )^N \in Z_c \cdot \prounipotentiwahori $ for some $ N > 0 $. Since both $ Z_c, \prounipotentiwahori \subset G_1 $, it is true that $ \goodsectionext ( \omega )^N \in G_1 $. Also, $ \goodsectionext ( \omega ) \in G_{\barycenter} $. Together, $ \goodsectionext ( \omega )^N \in N_G(T)(\field) \cap G_1 \cap G_{\barycenter} $. Since $ N_G(T)(\field) \cap G_1 \cap G_{\barycenter} = \compacttorus $, this means that $ \goodsectionext ( \omega^N ) $ represents $ 1_{\Omega} $ and so $ \omega^N = 1_{\Omega} $. This establishes $ J \subset \goodsectionext ( \Omega_{\tor} ) \cdot Z_c \cdot \prounipotentiwahori $. For the reverse inclusion\footnote{The inclusion $ J \supset \goodsectionext ( \Omega_{\tor} ) \cdot Z_c \cdot \prounipotentiwahori $ does not seem to be automatic from the definition of $ J $ because, while the product is obviously compact and open, it is a priori not clear that it is a \emph{subgroup}.}, it suffices to show that $ \goodsectionext ( \omega ) \in J $ for all $ \omega \in \Omega_{\tor} $. If $ n = \vert \omega \vert $ then, since $ \goodsectionext $ is a section of the canonical map, $ \goodsectionext ( \omega )^n \in \compacttorus $. Since $ \compacttorus \subset G_1 \subset G^1 $ and $ J = \fixer_{G_{\barycenter}} ( \lambda ) \cap G^1 $, it follows that $ \goodsectionext ( \omega )^n \in J $. It is obvious that the set $ \goodsectionext ( \Omega ) $ normalizes $ J $, since $ \goodsectionext ( \Omega ) \subset \fixer_{G_{\barycenter}} ( \lambda ) $ and $ J $ is the unique maximal compact open subgroup of $ \fixer_{G_{\barycenter}} ( \lambda ) $. Together, this implies that the generated subgroup $ \langle \goodsectionext ( \omega ), J \rangle $ is still compact and therefore $ \langle \goodsectionext ( \omega ), J \rangle \subset J $. Thus, $ \goodsectionext ( \omega ) \in J $, as desired.
\end{proof}

%\section{MAGMA}

\begin{bibdiv}
\begin{biblist}

\bib{bourbakiII}{book}{
   author={Bourbaki, Nicolas},
   title={Lie groups and Lie algebras. Chapters 4--6},
   series={Elements of Mathematics (Berlin)},
   note={Translated from the 1968 French original by Andrew Pressley},
   publisher={Springer-Verlag},
   place={Berlin},
   date={2002},
   pages={xii+300},
   isbn={3-540-42650-7},
   %review={\MR{1890629 (2003a:17001)}},
}

\bib{bourbakiIII}{book}{
   author={Bourbaki, Nicolas},
   title={Lie groups and Lie algebras. Chapters 7--9},
   series={Elements of Mathematics (Berlin)},
   note={Translated from the 1975 and 1982 French originals by Andrew
   Pressley},
   publisher={Springer-Verlag, Berlin},
   date={2005},
   pages={xii+434},
   isbn={3-540-43405-4},
   %review={\MR{2109105 (2005h:17001)}},
}

% \bib{BK}{article}{
%    author={Bushnell, Colin J.},
%    author={Kutzko, Philip C.},
%    title={Smooth representations of reductive $p$-adic groups: structure
%    theory via types},
%    journal={Proc. London Math. Soc. (3)},
%    volume={77},
%    date={1998},
%    number={3},
%    pages={582--634},
%    %issn={0024-6115},
%    %review={\MR{1643417 (2000c:22014)}},
%    %doi={10.1112/S0024611598000574},
% }

\bib{ducloux}{article}{
author={du Cloux, Fokko},
title={Combinatorics for the representation theory of real reductive groups},
status={preprint},
eprint={http://www.liegroups.org/papers/combinatorics.pdf},
date={2005}
}

% \bib{eisenbud}{book}{
%    author={Eisenbud, David},
%    title={Commutative algebra},
%    series={Graduate Texts in Mathematics},
%    volume={150},
%    note={With a view toward algebraic geometry},
%    publisher={Springer-Verlag, New York},
%    date={1995},
%    pages={xvi+785},
%    isbn={0-387-94268-8},
%    isbn={0-387-94269-6},
%    %review={\MR{1322960}},
%    %doi={10.1007/978-1-4612-5350-1},
% }

\bib{GR}{article}{
   author={Gross, Benedict H.},
   author={Reeder, Mark},
   title={Arithmetic invariants of discrete Langlands parameters},
   journal={Duke Math. J.},
   volume={154},
   date={2010},
   number={3},
   pages={431--508},
   %issn={0012-7094},
   %review={\MR{2730575 (2012c:11252)}},
   %doi={10.1215/00127094-2010-043},
}

\bib{humphreysAG}{book}{
   author={Humphreys, James E.},
   title={Linear algebraic groups},
   note={Graduate Texts in Mathematics, No. 21},
   publisher={Springer-Verlag, New York-Heidelberg},
   date={1975},
   pages={xiv+247},
   isbn={0-387-90108-6},
   %review={\MR{0396773}},
}

\bib{humphreysCG}{book}{
   author={Humphreys, James E.},
   title={Reflection groups and Coxeter groups},
   series={Cambridge Studies in Advanced Mathematics},
   volume={29},
   publisher={Cambridge University Press, Cambridge},
   date={1990},
   pages={xii+204},
   isbn={0-521-37510-X},
   %review={\MR{1066460 (92h:20002)}},
   %doi={10.1017/CBO9780511623646},
}

\bib{KL}{article}{
   author={Knightly, Andrew},
   author={Li, Charles},
   title={Simple supercuspidal representations of GL(n)},
   journal={Taiwanese J. Math.},
   volume={19},
   date={2015},
   number={4},
   pages={995--1029},
}

\bib{RY}{article}{
   author={Reeder, Mark},
   author={Yu, Jiu-Kang},
   title={Epipelagic representations and invariant theory},
   journal={J. Amer. Math. Soc.},
   volume={27},
   date={2014},
   number={2},
   pages={437--477},
   %issn={0894-0347},
   %review={\MR{3164986}},
   %doi={10.1090/S0894-0347-2013-00780-8},
}

\bib{rorobern}{article}{
   author={Rostami, Sean},
   title={The Bernstein presentation for general connected reductive groups},
   journal={J. London Math. Soc.},
   volume={9},
   date={2015},
   number={2},
   pages={514--536},
   eprint={arXiv:1312.7374}
}

\bib{roro}{article}{
author={Rostami, Sean},
title={Explicit inducing data for Gross-Reeder-Yu supercuspidals of split reductive groups},
status={preprint},
eprint={arXiv:1505.07442v1},
date={2015},
note={(previous version of the present article)}
}

\bib{springer}{book}{
   author={Springer, T. A.},
   title={Linear algebraic groups},
   series={Progress in Mathematics},
   volume={9},
   edition={2},
   publisher={Birkh\"auser Boston, Inc., Boston, MA},
   date={1998},
   pages={xiv+334},
   isbn={0-8176-4021-5},
   %review={\MR{1642713 (99h:20075)}},
   %doi={10.1007/978-0-8176-4840-4},
}

% \bib{titsgroup}{article}{
%    author={Tits, J.},
%    title={Normalisateurs de tores. I. Groupes de Coxeter \'etendus},
%    language={French},
%    journal={J. Algebra},
%    volume={4},
%    date={1966},
%    pages={96--116},
%    %issn={0021-8693},
%    %review={\MR{0206117}},
% }

\bib{tits}{article}{
   author={Tits, J.},
   title={Reductive groups over local fields},
   conference={
      title={Automorphic forms, representations and $L$-functions},
      address={Proc. Sympos. Pure Math., Oregon State Univ., Corvallis,
      Ore.},
      date={1977},
   },
   book={
      series={Proc. Sympos. Pure Math., XXXIII},
      publisher={Amer. Math. Soc., Providence, R.I.},
   },
   date={1979},
   pages={29--69},
   %review={\MR{546588 (80h:20064)}},
}

\end{biblist}
\end{bibdiv}

\end{document}